\newcommand{\citecomment}[2][]{\citen{#2}#1\citevar}
\newcommand{\citeone}[1]{\citecomment{#1}}
\newcommand{\citetwo}[2][]{\citecomment[,~#1]{#2}}
\newcommand{\citevar}{\@ifnextchar\bgroup{;~\citeone}{\@ifnextchar[{;~\citetwo}{]}}}
\newcommand{\citefirst}{\@ifnextchar\bgroup{\citeone}{\@ifnextchar[{\citetwo}{]}}}
\pgfplotsset{compat=1.18}
\definecolor{highlight}{HTML}{3465a4}
\renewcommand*\libertine@figurestyle{LF}
\newcolumntype{H}{>{\setbox0=\hbox\bgroup}c<{\egroup}@{}}
\tikzset{
	commutative diagrams/.cd, 
	arrow style=tikz, 
	diagrams={>=stealth}
}
\tikzset{
	arrow/.pic={\path[tips,every arrow/.try,->,>=#1] (0,0) -- +(0,4pt);},
	pics/arrow/.default={triangle 90}
}
\tikzset{->-/.style={decoration={
			markings,
			mark=at position .6 with {\arrow{latex}}},postaction={decorate}}
}
\tikzset{
	c/.style={every coordinate/.try}
}
\setlist[enumerate,1]{%
	label=(\roman*)	,itemsep=0.3em
	}
\setlist[itemize]{itemsep=0.3em}
\setlist[description]{itemsep=0.3em,labelindent=0.5cm}
\newcommand{\mf}{\mathfrak}
\DeclareMathOperator*{\Res}{Res}
\newcommand{\Gm}[1][]{%
	\ifthenelse{\isempty{#1}}%
	{\mathbb{C}^\times}
	{(\mathbb{C}^\times)^{#1}}
}
\newcommand{\rd}{\mathrm{d}}
\newcommand{\ri}{\mathtt{i}}
\newcommand{\re}{\mathrm{e}}
\newcommand{\de}{{\partial}}
\newcommand{\bbA}{\mathbb{A}}
\newcommand{\bbH}{\mathbb{H}}
\newcommand{\bbZ}{\mathbb{Z}}
\newcommand{\bbR}{\mathbb{R}}
\newcommand{\bbC}{\mathbb{C}}
\newcommand{\bbP}{\mathbb{P}}
\newcommand{\bbE}{\mathbb{E}}
\newcommand{\bbQ}{\mathbb{Q}}
\newcommand{\cR}{\mathcal{R}}
\newcommand{\cY}{\mathcal{Y}}
\newcommand{\cZ}{\mathcal{Z}}
\newcommand{\cO}{\mathcal{O}}
\newcommand{\cE}{\mathcal{E}}
\newcommand{\cC}{\mathcal{C}}
\newcommand{\cS}{\mathcal{S}}
\newcommand{\cK}{\mathcal{K}}
\newcommand{\cW}{\mathcal{W}}
\newcommand{\cG}{\mathcal{G}}
\newcommand{\cA}{\mathcal{A}}
\newcommand{\cB}{\mathcal{B}}
\newcommand{\cF}{\mathcal{F}}
\newcommand{\cI}{\mathcal{I}}
\newcommand{\cJ}{\mathcal{J}}
\newcommand{\RR}{\mathcal{R}}
\newcommand{\cM}{\mathcal M}
\renewcommand{\l}{\left}
\renewcommand{\r}{\right}
\def\beq{\begin{equation}}                     %
	\def\eeq{\end{equation}}                       %
\def\bea{\begin{eqnarray}}                     
	\def\eea{\end{eqnarray}}
\def\bary{\begin{array}} 
	\def\eary{\end{array}} 
\def\ben{\begin{enumerate}} 
	\def\een{\end{enumerate}}
\def\bit{\begin{itemize}} 
	\def\eit{\end{itemize}}
\def\nn{\nonumber}
\theoremstyle{plain}
\newtheorem{thm}{Theorem}[section]
\newtheorem*{thm*}{Theorem}
\newtheorem{lem}[thm]{Lemma}
\newtheorem{prop}[thm]{Proposition}
\newtheorem*{prop*}{Proposition}
\newtheorem*{conj*}{Conjecture}
\newtheorem{cor}[thm]{Corollary}
\newtheorem*{cor*}{Corollary}
\theoremstyle{definition}
\newtheorem{defn}[thm]{Definition}
\newtheorem{rmk}[thm]{Remark}
\theoremstyle{plain}
\theoremstyle{plain}
\theoremstyle{plain}
\theoremstyle{definition}
\theoremstyle{plain}
\crefname{equation}{Eq.}{Eqs.}
\crefname{eqnarray}{Eq.}{Eqs.}
\crefname{algo}{algorithm}{algorithms}
\crefname{conj}{conjecture}{conjectures}
\crefname{lem}{lemma}{lemmas}
\crefname{thm}{theorem}{theorems}
\crefname{claim}{claim}{claims}
\crefname{rmk}{remark}{remarks}
\crefname{prop}{proposition}{propositions}
\crefname{section}{section}{sections}
\crefname{appendix}{appendix}{appendices}
\crefname{cor}{corollary}{corollaries}
\crefname{figure}{figure}{figures}
\crefname{table}{table}{tables}
\crefname{example}{example}{examples}
\crefname{prob}{problem}{problems}
\crefname{assm}{assumption}{assumptions}
\crefname{defn}{definition}{definitions}
\crefname{notation}{notation}{notations}
\crefname{speculation}{speculation}{speculations}
\crefname{construction}{construction}{constructions}
\crefname{observation}{observation}{observations}
\crefname{innercustomthm}{Theorem}{Theorems}
\crefname{innercustomconj}{Conjecture}{Conjectures}
\crefname{innercustomassumption}{assumption}{Assumption}
\crefname{innerpracticalresult}{practical result}{practical results}
\newcommand{\bra}{\left\langle}
\newcommand{\ket}{\right\rangle}
\def\Xint#1{\mathchoice
   {\XXint\displaystyle\textstyle{#1}}%
   {\XXint\textstyle\scriptstyle{#1}}%
   {\XXint\scriptstyle\scriptscriptstyle{#1}}%
   {\XXint\scriptscriptstyle\scriptscriptstyle{#1}}%
   \!\int}
\def\XXint#1#2#3{{\setbox0=\hbox{$#1{#2#3}{\int}$}
     \vcenter{\hbox{$#2#3$}}\kern-.5\wd0}}
\def\dashint{\Xint-}
\crefname{equation}{Eq.}{Eqs.}
\crefname{eqnarray}{Eq.}{Eqs.}
\crefname{algo}{Algorithm}{Algorithms}
\crefname{conj}{Conjecture}{Conjectures}
\crefname{lem}{Lemma}{Lemmas}
\crefname{thm}{Theorem}{Theorems}
\crefname{customthm}{Theorem}{Theorems}
\crefname{claim}{Claim}{Claims}
\crefname{rmk}{Remark}{Remarks}
\crefname{prop}{Proposition}{Propositions}
\crefname{section}{Section}{Sections}
\crefname{appendix}{Appendix}{Appendices}
\crefname{cor}{Corollary}{Corollaries}
\crefname{figure}{Figure}{Figures}
\crefname{table}{Table}{Tables}
\crefname{example}{Example}{Examples}
\crefname{prob}{Problem}{Problems}
\crefname{assm}{Assumption}{Assumptions}
\crefname{defn}{Definition}{Definitions}
\crefname{customconj}{Conjecture}{Conjectures}
\numberwithin{equation}{section}
\title{Conifold gap  and all-genus mirror symmetry for local $\bbP^2$}
\author{Andrea Brini$^{1,2}$}
\address{$^1$ University of Sheffield, School of Mathematical and Physical Sciences, Hounsfield Road,  Sheffield S3 7RH, United Kingdom.}
\address{$^2$ On leave from CNRS, DR~13, Montpellier, France}
\email{a.brini@sheffield.ac.uk}
\begin{document}

\begin{abstract}
    
    The Conifold Gap Conjecture asserts that 
    the polar part of the Gromov--Witten potential of a Calabi--Yau threefold 
    near its conifold locus
    has a universal expression described by 
    the logarithm of 
    the Barnes $G$-function. 
    In this paper, I prove the Conifold Gap Conjecture 
    for the local projective plane. The proof relates the higher genus conifold Gromov--Witten generating series of local~$\bbP^2$
    to the thermodynamics of a certain statistical mechanical ensemble of repulsive particles on the positive half-line.
    As a corollary, this establishes the all-genus mirror principle for local $\bbP^2$ through the direct integration of the BCOV holomorphic anomaly equations.

\end{abstract}
\setcounter{tocdepth}{1}
	
	\maketitle

\section{Introduction}

This paper establishes the Conifold Gap Conjecture
\cite{Huang:2006si,Huang:2006hq} for local $\bbP^2$ -- the total space of the canonical bundle over the projective plane. In turn, this completes the proof of the all-genus mirror principle through the direct integration of the BCOV holomorphic anomaly equations for the first example of a non-trivial Calabi--Yau threefold. I explain somewhat informally the general context in \cref{sec:context}, and describe results and strategy of the proof in \cref{sec:localp2intro}.

\subsection{Context}
\label{sec:context}
Let $X$ be a non-singular Calabi--Yau threefold. The Gromov--Witten (GW) invariants of $X$ are a virtual dimension zero count
\[
N_{g,\beta}^X \coloneqq \int_{[\overline{M}_g(X, \beta)]^{\rm vir}} 1 \in \bbQ
\]
of genus-$g$ stable maps in $X$ with image representing a class $\beta \in H_2(X, \bbZ)$. Here $\overline{M}_g(X, \beta)$ denotes the
moduli space of genus-$g$, degree-$\beta$ unpointed stable maps to $X$, and $[\overline{M}_g(X, \beta)]^{\rm vir}$ its virtual fundamental class.

Completely determining all GW invariants of a smooth Calabi--Yau threefold is a notoriously hard problem, and a largely unsolved one in general. 
In genus zero, much is known since the  physics prediction \cite{COGP91:pairCYexactSolnSuperConfTh} 
and subsequent confirmation \cite{MR1653024,MR1621573}
of the rational GW invariants
of the quintic threefold via the mirror principle. Far less is known however in higher genus, despite spectacular progress in the study of target manifolds with additional structure, such as toric symmetry \cite{AKMV05:TopVert,MOOP11:GWDTtoric,Bouchard:2007ys,Fang:2016svw}. The state-of-the-art on the construction of an all-genus version of the mirror principle is given by the physics-motivated suggestion \cite{Klemm:1999gm,Huang:2006hq,Huang:2006si} of a ``direct integration'' approach  to the Holomorphic Anomaly Equations of  Bershadsky--Cecotti--Ooguri--Vafa (BCOV) \cite{Bershadsky:1993cx}, which ostensibly determine the fixed-ge\-nus generating functions of GW invariants up to a finite (albeit fast-growing with the genus) number of free parameters. For $X$ a Calabi--Yau threefold with Picard number one, this approach
can be 
distilled into postulating five conjectural statements: 
\ben
\item the Yamaguchi--Yau Finite Generation Property \cite{Yamaguchi:2004bt,Alim:2007qj}\,;
\item the BCOV Holomorphic Anomaly Equations \cite{Bershadsky:1993cx}\,; 
\item the Orbifold Regularity property \cite{Huang:2006hq}\,;
\item the Conifold Gap property  \cite{Huang:2006si}; and
\item the Castelnuovo bound  \cite{Huang:2006hq}\,. 
\een
In informal terms, and deferring precise statements to \cref{sec:localp2intro}, the Finite Generation Property~(i) asserts that the fixed-genus generating functions of the invariants $N^X_{g,\beta}$ belong to a certain finitely generated polynomial ring $\RR_X$, determined by genus-zero data alone. The Holomorphic Anomaly Equations~(ii) recursively and fully determine, for each $g$, their dependence on a particular generator of $\RR_X$ (the BCOV propagator) in terms of lower genus data. Finally, the Orbifold Regularity, Conifold Gap, and Castelnuovo bound (iii)--(v) prescribe asymptotic conditions at boundary points in the stringy-extended K\"ahler moduli space of $X$ (respectively, at its orbifold, conifold, and large volume points), reducing the computation of genus-$g$ GW invariants to a finite-dimensional problem.
These speculative constraints are conjectured to determine the GW invariants $N^X_{g,\beta}$ up to high genus ($g \leq 53$ for the quintic threefold \cite{Huang:2006hq}) when $X$ is projective; and up  to {\it any} genus when $X$ is toric \cite{HKR:HAE}, in which case the Castelnuovo bound (v) is redundant. Recent spectacular advances using the Givental--Teleman reconstruction theorem \cite{MR1901075,MR2917177}
 have led to a mathematical proof of (i)--(iii) and (v) for some central examples of one-parameter Calabi--Yau threefolds, such as the quintic threefold \cite{Guo:2018mol, Chang:2021scw,Liu:2022agh,Alexandrov:2023zjb}
 and the local projective plane \cite{LP18:HAE,Coates:2018hms}.

The conjectural Conifold Gap property (iv) has on the other hand stood out as the single, and particularly elusive, exception in this list\footnote{The Conifold Gap has so far been known to (trivially) hold only for the resolved conifold $X=\mathrm{Tot}_{\bbP^1}(\cO(-1)\oplus \cO(-1))$, where it is an elementary consequence of the higher genus multi-covering formula for the super-rigid local curve \cite{Marino:1998pg, BP06:Rigidity,MR1728879}. In particular, in that case it follows from already knowing a closed expression for all GW invariants in the first place.}. It prescribes a universal expression for the polar part of the fixed-genus GW generating series of $X$ near the discrimimant locus of the mirror in terms of the free energy of the one-dimensional bosonic string at  self-dual radius \cite{Distler:1990mt,Ghoshal:1995wm}, which is in turn encoded in the asymptotics of the logarithm of the Barnes $G$-function \cite{MR2723248}.
 The main difficulty towards a proof lies in the fact that the physical heuristics behind the conjecture is inherently \emph{non-geometrical} in character: unlike (i)--(ii), the Conifold Gap does not emerge as a general consequence of the R-matrix quantisation formalism for Gromov--Witten theory; and unlike (iii) and (v), it cannot be phrased as a geometric constraint on the invariants themselves, either directly as a bound on the existence of high-genus curves in $X$ for fixed degree, or indirectly through their relation via wall-crossing to FJRW/orbifold GW invariants of an auxiliary 
 target geometry.

\subsection{Local $\bbP^2$}
\label{sec:localp2intro}

I will now make precise the description the BCOV axioms (i)--(iv) for the case when \[X=K_{\bbP^2}\coloneqq \mathrm{Tot}\,\cO_{\bbP^2}(-3)\] is the local projective plane\footnote{As $K_{\bbP^2}$ is toric, I shall omit a discussion of the Castelnuovo bound as no additional constraints arise from it in this case.}. Letting $H$ be the hyperplane class, we form the (large radius) Gromov--Witten formal generating series
\beq
\mathrm{GW}^{\rm LR}_g(Q) \coloneqq \sum_{d>0}Q^d N_{g, d H}^{K_{\bbP^2}}.
\label{eq:GWP2intro}
\eeq
In \eqref{eq:GWP2intro} we deliberately neglected the contribution of degree-zero stable maps to the genus-$g$ potentials, as the corresponding moduli spaces of stable maps are non-compact, and the associated GW invariants are not well-defined non-equivariantly. In terms of 
the hypergeometric function
\[
\mathsf{C}(y) \coloneqq
\, _2F_1\left(\frac{1}{3},\frac{2}{3};1;-27 y\right)
= \frac{\sqrt{3}}{2 \pi } \sum_{n \geq 0} \frac{\Gamma\l(\frac{1}{3}+n\r) \Gamma\l(\frac{2}{3}+n\r)}{(n!)^2}  (-27 y)^n \,,
\]
and the functions
\[
\mathsf{X}(y)\coloneqq \frac{1}{1+27 y}\,, \qquad \mathsf{F}(y) \coloneqq y \partial_y \log \mathsf{C}+\frac{1-\mathsf{X}}{3}\,,
\]
we consider, on the disc $|y|<1/27$, 
the analytic change of variables
\[
Q \longleftrightarrow y \longleftrightarrow \mathsf{q}_{\rm LR} 
\]
given  by
\[
Q(y) \coloneqq \exp 
\int_{+\infty}^y \mathsf{C}(y') \frac{\rd y'}{y'}=y-6 y^2+63 y^3+\cO\left(y^4\right)
\,,
\]
\[
-\mathsf{q}_{\rm LR}(y) \coloneqq \exp\int_{+\infty}^y \frac{\mathsf{X}(y')}{ \mathsf{C}(y')^2}\frac{\rd y'}{y'}=y-33 y^2+1035 y^3+\cO\left(y^4\right)
\,,
\]
so that 
\[
-\mathsf{q}_{\rm LR}(Q) = Q-9 Q^2+108 Q^3-1461 Q^4+\cO\left(Q^5\right)\,.
\]

%

With these definitions in place, the BCOV axioms (i)-(iv) for $X=K_{\bbP^2}$ can be described as follows.\\

The Finite Generation property (i) was proved in \cite{LP18:HAE,Coates:2018hms}. In its equivalent presentation given in \cite{Brini:2024gtn,BFGW21:HAE}, it asserts that $\mathrm{GW}_g^{\rm LR} \in \bbQ\llbracket Q\rrbracket$ has finite radius of convergence around $Q=0$ and, under the change-of-variables $Q = Q(y)$, it has a unique lift to the polynomial ring
\[
\RR_{K_{\bbP^2}} \coloneqq \bbQ[\mathsf{X}^{\pm 1}][\mathsf{F}]\,,
\]
with $\deg_{\mathsf{F}} \mathrm{GW}^{\rm LR}_g = 3g-3$. 

The Holomorphic Anomaly Equation (ii), also proved in 
\cite{LP18:HAE,Coates:2018hms}, is in turn the statement that, for $g \geq 2$,
\[
\frac{\partial \mathrm{GW}^{\rm LR}_g}{\partial \mathsf{F}} = \frac{3\mathsf{C}^2}{2\mathsf{X}}\l[ Q^2 \sum_{g'=1} ^{g-1} \frac{\partial \mathrm{GW}^{\rm LR}_{g-g'}}{\partial Q} \frac{\partial \mathrm{GW}^{\rm LR}_{g'}}{\partial Q} + \l( Q \frac{\partial}{\partial Q}\r)^2 \mathrm{GW}^{\rm LR}_{g-1}\r]\,,
\]
as an identity in $\RR_{K_{\bbP^2}}$,
determining recursively the coefficients of the positive degree terms of $\mathrm{GW}^{\rm LR}_g$ as a polynomial in $\mathsf{F}$ from the initial datum
\[
Q \de_{Q} \mathrm{GW}_{1} = -\frac{\mathsf{F}}{2 \mathsf{C}}-\frac{\mathsf{X}}{12 \mathsf{C}}\,.
\]
Under the formulation given in \cite{BFGW21:HAE,Brini:2024gtn}, the Orbifold Regularity property (iii) for $K_{\bbP^2}$, proved in \cite{MR3960668}, further constrains $\mathrm{GW}_g^{\rm LR}$ to belong to the finite-dimensional $\bbQ$-vector space
\[
\mathrm{GW}^{\rm LR}_g \in 
\mathrm{span}_\bbQ \bigg\{ \mathsf{X}^n\, \mathsf{F}^m \bigg\}_{\substack {0\leq m\leq 3g-3 \\ 1-g\leq n\leq 2g-2\\ 3n+m\geq 0}}
\,.
\]
Altogether, the properties (i)--(iii) completely fix $\mathrm{GW}^{\rm LR}_g$ from lower genus data up to its constant term in $\mathsf{F}$,
\beq
\mathrm{GW}^{\rm LR}_g\Big|_{\mathsf{F} \to 0} \in 
\mathrm{span}_\bbQ \Big\{ \mathsf{X}^n \Big\}_{n=0}^{2g-2}
\,.
\label{eq:modamb}
\eeq

The conjectural Conifold Gap property (iv) finally fixes the leftover $2g-1$-dimensional ambiguity at every genus up to an additive constant, corresponding to the non-equivariantly ill-defined constant map term which we deliberately discarded in the degree expansion \eqref{eq:GWP2intro}. To state it, we introduce on the open disc $|1+27 y|<1$
the analytic change of variables
\[
t_{\rm CF} \longleftrightarrow y \longleftrightarrow \mathsf{q}_{\rm CF} 
\]
defined by
\[
t_{\rm CF}(y) \coloneqq \frac{2\pi }{\sqrt{3}} \int_{+\infty} ^y \log\mathsf{q}_{\rm LR}(y') \mathsf{C}(y') \frac{\rd y'}{y'} = 1+27 y +\frac{11}{18} (1+27 y)^2
+
\cO(1+27y)^3\,,\]
\beq \mathsf{q}_{\rm CF}(y) \coloneqq \exp\l(\frac{4 \pi^2}{3}\frac{1}{\log \mathsf{q}_{\rm LR}}\r)=\frac{1}{27} (1+27 y)+\frac{5}{243} (1+27 y)^2
+ \cO(1+27y)^3
\label{eq:qfricke}
\eeq
so that
\[
\mathsf{q}_{\rm CF} = \frac{1}{27} t_{\rm CF} -\frac{1}{486} t_{\rm CF}^2+\frac{1}{13122} t_{\rm CF}^3 + \cO(t_{\rm CF}^4)\,.
\]
For $g \geq 2$, we define the higher genus \emph{conifold} GW potentials as
\[
\mathrm{GW}_g^{\rm CF} \coloneqq 
\mathrm{GW}_g^{\rm LR}\Big|_{\mathsf{q}_{\rm LR} \to \mathsf{q}_{\rm CF}}\,.
\]

From \cite{Coates:2018hms}, the genus-$g$ conifold GW potential has a pole of order $2g-2$ at the conifold point $t_{\rm CF} = 1+27 y = q_{\rm CF} =0$. The Conifold Gap Conjecture \cite{Huang:2006si,Huang:2006hq,Ghoshal:1995wm,Coates:2018hms} predicts the full expression of the polar part of its Laurent expansion in the local coordinate $t_{\rm CF}$.
\begin{conj*}[Conifold Gap property for $K_{\bbP^2}$]
Let $g \in \bbZ_{\geq 2}$. Near $t_{\rm CF}=0$, the genus-$g$ conifold GW potential of $K_{\bbP^2}$ has the asymptotic behaviour
\beq
\mathrm{GW}_g^{\rm CF} = \frac{3^{g-1} B_{2g}}{2g(2g-2)} t_{\rm CF}^{2-2g}  +\cO(1)\,. 
\label{eq:cgintro}
\eeq
Here $B_n$ denotes the $n^{\rm th}$ Bernoulli number,
\[
\frac{t}{\re^{t}-1} \coloneqq \sum_{n \geq 0} \frac{B_n t^n}{n!}\,.
\]
\end{conj*}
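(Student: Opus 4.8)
The plan is the following. First, I would use the fact that, once (i)--(iii) are granted, the large-radius potentials $\mathrm{GW}^{\rm LR}_g$ are reconstructed by the Eynard--Orantin topological recursion on the mirror curve $\Sigma_y$ of $K_{\bbP^2}$ (the Remodeling Theorem, known for $K_{\bbP^2}$). Consequently $\mathrm{GW}^{\rm LR}_g$ extends to a (real-)analytic function of the complex modulus $y$ away from the discriminant, and the conifold point $y=-1/27$ is exactly where $\Sigma_y$ acquires a simple node, i.e.\ where one compact cycle $A_y$ pinches. The Conifold Gap identity \eqref{eq:cgintro} then becomes a statement about the behaviour of the genus-$g$ free energy of $\Sigma_y$ as $A_y$ degenerates, read off in the conifold flat coordinate $t_{\rm CF}$. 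A preliminary task is a period computation: one evaluates the vanishing period $\varpi(y)\coloneqq\oint_{A_y}\lambda_{\rm mirror}$ and checks, using the defining integrals of $\mathsf{C}$, $\mathsf{X}$ and $A$ together with the normalisation of $t_{\rm CF}$ in \eqref{eq:qfricke}, that it is an \emph{exact} scalar multiple of $t_{\rm CF}$, namely $\varpi=t_{\rm CF}/\sqrt 3$; this finite computation --- where the $\bbZ/3$ symmetry of $K_{\bbP^2}$ at its orbifold cusp enters --- is what deposits the factor $3^{g-1}$ into \eqref{eq:cgintro}.

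The conceptual core is to match the local topological-recursion data at the node with an explicit statistical-mechanical ensemble. After an exponential change of coordinate on $\Sigma_y$ the local spectral curve at the node is the spectral curve of a one-cut $\beta=2$ (determinantal, free-fermionic) matrix integral supported on the positive half-line: a gas of particles $u_1,\dots,u_N\in\bbR_{>0}$ with repulsive two-body interaction $-\sum_{i<j}\log(u_i-u_j)^2$ in a confining external potential, with the collapsing cut and its filling fraction encoding $\varpi$, and with the conifold regime $t_{\rm CF}\to0$ realised as the scaling limit in which the 't~Hooft-type coupling ($N$ times the genus-counting parameter) is held fixed $\sim\varpi$ while the genus parameter goes to $0$. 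The partition function of such a gas is a Laguerre-type Selberg integral, which evaluates to a product of Barnes $G$-functions --- this is the ``logarithm of the Barnes $G$-function'' of the abstract, and supplies the thermodynamics we need.

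The conclusion then falls out of two features of the asymptotic expansion of $\log G(1+z)$ for $z\to\infty$: beyond a finite part (its $z^2\log z$, $z^2$, $z$, $\log z$ and constant terms, all of which sit in genus $\leq 1$) the series contains \emph{only even negative powers} of $z$, and the coefficient of $z^{2-2g}$ is \emph{exactly} $\tfrac{B_{2g}}{2g(2g-2)}$. Hence, for $g\geq 2$, the genus-$g$ free energy of the gas is the single monomial $\tfrac{B_{2g}}{2g(2g-2)}\varpi^{2-2g}$: the absence of further negative powers is precisely the ``gap'', which forces the subleading polar coefficients of $\mathrm{GW}^{\rm CF}_g$ --- those of $t_{\rm CF}^{3-2g},\dots,t_{\rm CF}^{-1}$ --- to vanish, and the value of the surviving coefficient, combined with $\varpi=t_{\rm CF}/\sqrt3$, gives $\tfrac{3^{g-1}B_{2g}}{2g(2g-2)}$. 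To close the argument one checks that the global (non-local) part of the topological recursion on $\Sigma_y$ is regular at $t_{\rm CF}=0$, so it does not disturb the polar part, and that the substitution $\mathsf{q}_{\rm LR}\to\mathsf{q}_{\rm CF}$ of \eqref{eq:qfricke} really is the change of coordinate the recursion sees across the node; the order-$(2g-2)$ bound on the pole of $\mathrm{GW}^{\rm CF}_g$ from \cite{Coates:2018hms} guarantees there is nothing else. Genus two, where \eqref{eq:cgintro} says $\mathrm{GW}^{\rm CF}_2=-\tfrac{1}{80}\,t_{\rm CF}^{-2}+\cO(1)$, is a convenient numerical sanity check on the normalisations.

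The step I expect to be the real obstacle is the second one --- making precise, and proving, the dictionary between the local data at the node of $\Sigma_y$ (equivalently, the conifold-frame series $\mathrm{GW}^{\rm LR}_g\big|_{\mathsf{q}_{\rm LR}\to\mathsf{q}_{\rm CF}}$) and the thermodynamics of \emph{this particular} half-line ensemble: pinning down the confining potential, the integer $N$ and the scaling unambiguously requires uniform control of the mirror map and of every period of $K_{\bbP^2}$ throughout a full neighbourhood of the conifold cusp, and one must check that the exponential change of variable genuinely lands the support on $\bbR_{>0}$ with a Laguerre-type weight, so that the partition function is the tractable Barnes $G$-ratio rather than a less amenable special function. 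Granting that identification, the third step is the textbook asymptotics of $\log G$, and the first and last steps are finite period computations and bookkeeping.
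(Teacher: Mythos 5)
Your overall philosophy is the same as the paper's (relate the conifold expansion to a repulsive gas on $\bbR_{>0}$ whose thermodynamics is governed by the asymptotics of $\log G$), but the step you yourself flag as ``the real obstacle'' is exactly the one your argument leaves unproven, and it is the entire content of the conjecture. You propose to read off only the \emph{local} spectral data at the node, identify it with a Laguerre/Selberg-type ensemble whose partition function evaluates exactly to Barnes $G$-functions, and then ``check that the global (non-local) part of the topological recursion on $\Sigma_y$ is regular at $t_{\rm CF}=0$''. That last sentence is not a check one can perform: there is no canonical decomposition of $F_g$ into a ``local node contribution'' plus a ``global part'', and the statement that whatever is left over after subtracting the local model is $\cO(1)$ in $t_{\rm CF}$ \emph{is} the Conifold Gap. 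Local analysis at the colliding branch points controls the leading coefficient $t_{\rm CF}^{2-2g}$ (this much was already known, and is consistent with the Harer--Zagier interpretation), but it does not control the subleading polar coefficients $t_{\rm CF}^{3-2g},\dots,t_{\rm CF}^{-1}$, whose vanishing is the actual claim; a scaling limit with 't~Hooft coupling $\sim\varpi$ captures the former, not the latter. Relatedly, the exactly-solvable Laguerre ensemble has a rational spectral curve and cannot reproduce the elliptic mirror curve of $K_{\bbP^2}$ beyond leading order, so the exact Barnes-$G$ evaluation you are counting on does not attach to the geometry you need.

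The paper closes this gap by a different mechanism: it constructs a \emph{global} convergent ensemble on $\bbR_{>0}$ (a transcendental confining potential $\Phi$ defined by an ODE together with a two-body interaction $\Lambda(x,y)=-\tfrac12\log(x^2+xy+y^2)$), proves it is an analytic strictly convex pair so that the Borot--Guionnet loop-equation machinery rigorously yields a topological $1/N$ expansion solved by the Eynard--Orantin recursion, and establishes a general ``gap property'' for \emph{any} such ensemble: the polar part of $F_g(t)$ at small 't~Hooft coupling coincides with that of the GUE (hence with the Barnes-$G$ asymptotics), because the non-Gaussian corrections are analytic at $t=0$. The partition function is never evaluated in closed form. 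The identification with $\mathrm{GW}^{\rm CF}_g$ is then an exact global statement, not a local one: the planar resolvent and two-point function are obtained by solving non-local Riemann--Hilbert problems in theta functions, matched with the full Hori--Iqbal--Vafa curve in the conifold polarisation (disk, annulus, and Torelli marking, with $t=t_{\rm CF}/9$), and the free energies are compared via the symplectic-invariance/homothety covariance of the recursion, giving $F_g^{(\Phi,\Lambda)}=27^{g-1}\mathrm{GW}^{\rm CF}_g$ up to a constant, from which \eqref{eq:cgintro} follows. Your plan would need an analogous global construction (or some other argument controlling all polar coefficients, not just the leading one) to become a proof.
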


Up to the factor of $3^{g-1}$, the leading coefficient in the Laurent expansion of $\mathrm{GW}_g^{\rm CF}$ near $t_{\rm CF}=0$ coincides, by the Harer--Zagier formula, with the orbifold Euler characteristic $\chi(\cM_g)$ of the moduli space of genus-$g$ curves, while the vanishing of the subleading polar coefficients is the `conifold gap'. Imposing the asymptotic behaviour \eqref{eq:cgintro} gives $2g-2$
linearly independent constraints on the modular ambiguity in \eqref{eq:modamb}, fixing $\mathrm{GW}_g^{\rm LR}$ uniquely up to an additive constant\footnote{This constant is fixed by imposing that $\mathrm{GW}_g^{\rm LR}(Q)|_{Q=0}=0$ under our definition \eqref{eq:GWP2intro}, where degree zero invariants were discarded. Obviously, this is the single point where reinstating a constant map contribution to $\mathrm{GW}_g^{\rm LR}(Q)$ by considering a $\bbC^\times$-action on $K_{\bbP^2}$ 
would differ from the above, by fixing this constant ambiguity to a suitably adjusted value.}
. \\
Assuming the Conifold Gap property as a postulate, the authors of \cite{HKR:HAE} were able to effectively code the calculation of $\mathrm{GW}_g^{\rm LR}$ as far as computer power allowed them\footnote{Dispensing of all BCOV axioms, an all-genus version of local mirror symmetry for $K_{\bbP^2}$, and indeed for any toric Calabi--Yau threefold, is in principle available through the ``remodelling'' conjecture 
\cite{Bouchard:2007ys}, 
now a theorem \cite{Fang:2016svw,Eynard:2012nj}. However hardly any higher genus GW invariants can be computed directly in that approach, with results for toric local surfaces (such as $X=K_{\bbP^2}$) limited to $g=2$ \cite{Brini:2009nbd,MR2861610}.} ($g=105$ in 2008). The conjecture was subsequently checked up to genus $g=7$ with an explicit calculation of $\mathrm{GW}_g^{\rm CF}$ by Coates--Iritani in \cite{Coates:2018hms}. \\

Our main result in this paper is the following:

\begin{thm*}[=Corollary~\ref{cor:cg}]
The Conifold Gap Conjecture holds for $X=K_{\bbP^2}$ in all genera.
\end{thm*}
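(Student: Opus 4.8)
The idea is to sidestep the conjectural axiom~(iv) and instead produce the polar part of $\mathrm{GW}^{\rm CF}_g$ by an independent route. By the already-proven axioms (i)--(iii), $\mathrm{GW}^{\rm LR}_g$ is determined from lower-genus data up to the $(2g-1)$-dimensional ambiguity~\eqref{eq:modamb}, and imposing~\eqref{eq:cgintro} removes all of it but the additive constant fixed to zero by the normalization in~\eqref{eq:GWP2intro}. Hence it suffices to show, for each $g\geq 2$, that the Laurent coefficients of $t_{\rm CF}^{2-2g},\dots,t_{\rm CF}^{-1}$ in $\mathrm{GW}^{\rm CF}_g$ are $3^{g-1}B_{2g}/(2g(2g-2))$ and then $0,\dots,0$ --- in other words, that the \emph{entire} polar part of $\mathrm{GW}^{\rm CF}_g$ is a single monomial with the prescribed coefficient.

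First I would package the conifold-frame potentials into $\sum_{g\geq 2}\lambda^{2g-2}\mathrm{GW}^{\rm CF}_g$ and feed in the Eynard--Orantin/BKMP presentation of the genus-$g$ free energies of $K_{\bbP^2}$ --- a theorem in the large-radius frame --- transported to the conifold frame through the coordinate changes of the excerpt. The geometric input that drives everything is that the mirror curve of $K_{\bbP^2}$ acquires an \emph{ordinary node} exactly over $t_{\rm CF}=0$: two of its ramification points collide, at the rate dictated by the conifold period, so that the flat coordinate $t_{\rm CF}$ is, up to a fixed scale, the smoothing parameter of the node. The crux is then to recognise, in the topological recursion localised at this nodal degeneration, the loop equations of an ensemble of $N$ particles $u_1,\dots,u_N$ on the half-line $\bbR_{>0}$ subject to the logarithmic repulsion $\prod_{i<j}(u_i-u_j)^2$ in a confining potential read off from the local curve, with 't~Hooft coupling $\mu:=N\lambda$ identified, through the conifold period, with $3^{-1/2}t_{\rm CF}$. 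Concretely: the spectral data of the local node are the equilibrium measure of this log-gas, the Eynard--Orantin differentials are its connected correlators, and the polar part of $\mathrm{GW}^{\rm CF}_g$ is its genus-$g$ free energy --- this is the ``thermodynamics of repulsive particles on the positive half-line''.

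With the ensemble in hand I would compute its all-genus free energy by the orthogonal-polynomial method: $Z_N$ is a Hankel determinant, hence a product of one-dimensional moment integrals, hence a product of $\Gamma$-values whose $N\to\infty$ expansion is governed by the logarithm of the Barnes $G$-function. The key point --- a classical fact about this universality class, and the reason for the ``gap'' --- is that the asymptotic expansion of $\log G$ contributes at genus $g\geq 2$ the \emph{single} monomial $B_{2g}\,z^{2-2g}/(2g(2g-2))$, with no lower-order poles: the local nodal curve is the spectral curve of the Gaussian / $c=1$-string ensemble, whose genus-$g$ free energy is a pure power of the 't~Hooft coupling. Substituting $\mu=3^{-1/2}t_{\rm CF}$ turns $\mu^{2-2g}$ into $3^{g-1}t_{\rm CF}^{2-2g}$, which is exactly~\eqref{eq:cgintro}; the normalisation $3^{-1/2}$ is pinned either by the already-known genus $0,1,2$ conifold expansions of~\cite{Coates:2018hms} or directly from the residue of the conifold period.

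The step I expect to be hardest is the construction of the ensemble itself --- turning the heuristic ``local nodal curve $\leftrightarrow$ log-gas'' into a theorem. This breaks into: (a) controlling the order of the two limits, genus expansion versus $t_{\rm CF}\to 0$, so that the polar part of $\mathrm{GW}^{\rm CF}_g$ is computed by the local nodal piece alone and the remaining, regular, part of the mirror curve only feeds the $\cO(1)$ remainder; and (b) checking that the conifold period map $y\mapsto t_{\rm CF}(y)$ of the excerpt matches, on the nose, the map between the distinguished B-model flat coordinate and the 't~Hooft coupling of the ensemble, so that no non-linear reparametrisation can leak subleading poles into the polar part. Once (a)--(b) are secured, the Barnes $G$-function asymptotics close the argument, and the Conifold Gap property --- hence, via axioms (i)--(iii), all-genus mirror symmetry for $K_{\bbP^2}$ through direct integration of the BCOV equations --- follows.
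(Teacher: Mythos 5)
Your overall skeleton matches the paper's: relate $\mathrm{GW}^{\rm CF}_g$ to the thermodynamics of a repulsive ensemble whose free energy is governed by the Barnes $G$-function, fix the scale by the conifold period, and conclude via the remodelling theorem. But the step you yourself flag as hardest -- (a), the claim that the polar part of $\mathrm{GW}^{\rm CF}_g$ is computed by the topological recursion ``localised'' at the node, with the regular part of the mirror curve feeding only the $\cO(1)$ remainder -- is precisely where the proposal has a genuine gap, and it is not a technicality: the statement that \emph{all} subleading polar coefficients (orders $1$ through $2g-3$) vanish, i.e. that the local Gaussian model captures the conifold expansion beyond its leading $\chi(\cM_g)$-term, is essentially the Conifold Gap property itself. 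The leading-order blow-up of $F_g$ at a nodal degeneration of a spectral curve is folklore, but no mechanism is offered for controlling the interplay between the regular part of the Bergmann kernel/potential near the node and the higher-order polar terms; had such a formal ``localisation of the loop equations at the node'' been available, the conjecture would already follow for every toric Calabi--Yau. Your fallback tools do not close this: the orthogonal-polynomial/Hankel evaluation as a product of $\Gamma$-values only applies to the exactly Gaussian local model (which is the same Barnes-$G$ input as \cref{prop:HZ1}), and a pure log-gas $\prod_{i<j}(u_i-u_j)^2$ on $\bbR_{>0}$ with a potential ``read off from the local curve'' does not have the full mirror curve of $K_{\bbP^2}$ as its spectral curve, so it cannot substitute for the global identification.

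The paper's route is designed exactly to supply what your step (a) assumes. It constructs a \emph{convergent} generalised Coulomb gas at finite $N$ -- the pair $(\Phi,\Lambda)$ of \eqref{eq:phi} and \eqref{eq:lambda}, with a nontrivial two-body interaction, proven in \cref{lem:ASCP} to be an analytic strictly convex pair for small 't~Hooft coupling -- so that the Borot--Guionnet results give a rigorous all-order $1/N$ expansion and loop equations solved by the Eynard--Orantin recursion (\cref{prop:asympgen,prop:EO}). The gap is then obtained \emph{at the level of the convergent integral}: writing $\log Z^{(V,L)}=\log Z^{(x^2/2,0)}+\log\bigl(Z^{(V,L)}/Z^{(x^2/2,0)}\bigr)$, the expansion coefficients of the second term stay finite as $t\to 0$, so the entire polar part of each $F_g^{(\Phi,\Lambda)}$ is the GUE's (\cref{prop:HZ1}); no localisation of the recursion at the node is ever needed. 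The bridge to Gromov--Witten theory is then a global identification of the model's spectral data (solved explicitly via the Riemann--Hilbert problems for $\RR_{0,1}$ and $\RR_{0,2}$) with the Hori--Iqbal--Vafa curve in the conifold polarisation, up to an explicit word in $\mathsf{T},\mathsf{S},\mathsf{E},\mathsf{D}$ moves preserving regular framing, giving $F_g^{(\Phi,\Lambda)}=27^{g-1}\mathrm{GW}_g^{\rm CF}$ with $t=t_{\rm CF}/9$ (\cref{thm:mm=gw}), valid for all $t$ in a neighbourhood of $0$ and not merely asymptotically. To rescue your proposal you would need either to prove the nodal-localisation statement (with uniform control of the regular data to all polar orders) or, as the paper does, to exhibit a convergent ensemble realising the full conifold spectral curve; as written, the argument assumes the conclusion at its central step.
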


\subsubsection{Strategy of the proof}

The leading term in the conifold expansion \eqref{eq:cgintro} is known to arise from the asymptotics of the (gauged) Gaussian Unitary Ensemble at large rank. For $N \geq 1$, let \[\mathrm{vol}_{\rm HS}(U(N))\] denote the volume of the unitary group under the un-normalised Haar measure induced by the Hilbert--Schmidt inner product on $\mathfrak{u}(N)$,
\[\bra X, Y\ket_{\rm HS} \coloneqq \mathrm{Tr}(X^\dagger Y) = -\mathrm{Tr}(X Y)\,.\]  The GUE partition function is
\[
Z^{\rm GUE} \coloneqq \frac{1}{\mathrm{vol}_{\rm HS}(U(N))} \int_{\mathfrak{u}(N)} \rd M\, \re^{-\frac{N}{2 t} \mathrm{Tr} M^2}\,,  
\]
where $t \in \bbR_{>0}$ is the so-called 't~Hooft coupling, and $\rd M$ is the Lebesgue measure on $\mathfrak{u}(N)\simeq \bbR^{N^2}$. Using that
\[
\mathrm{vol}_{\rm HS}(U(N)) = \frac{(2\pi)^{N(N+1)}}{\prod_{k=1}^N k!}\,, \qquad \int_{\mathfrak{u}(N)} \rd M\, \re^{-\frac{t}{2 N} \mathrm{Tr} M^2} = \l( \frac{2\pi t}{N}\r)^{N^2/2}\,,
\]
the Stirling approximation of the factorial and the Euler--Maclaurin formula give, for $N \to \infty$ and fixed $t>0$,
\beq
\log Z^{\rm GUE} \sim \frac{N^2}{2} \l(\log t-\frac{3}{2}\r) -\frac{1}{12} \log N + \zeta'(-1) + \sum_{g=2}^\infty \frac{B_{\rm 2g} t^{2-2g}}  {2g(2g-2)} \l(\frac{N}{t}\r)^{2-2g}\,.
\label{eq:GUEexp}
\eeq
The Conifold Gap property could follow from showing that the Laurent expansion of $\mathrm{GW}^{\rm CF}_g$ near $t_{\rm CF}=0$  coincides, up to regular terms in $t_{\rm CF}$, with the asymptotic expansion of the GUE in $t/N$ around $N=\infty$, upon identifying $t = \sqrt{3} t_{\rm CF}$. One potential scenario for this to happen is if the conifold generating series are shown to arise from the large rank asymptotics of a random matrix ensemble obtained as an analytic perturbation, around zero 't~Hooft coupling, of the GUE. 

This is indeed the route that will be followed in this paper. The proof will be structured through the following four main steps.
\bit
\item I will introduce an analytic perturbation of the GUE given in terms of a certain generalised Coulomb gas with confining and analytic convex potential, and a 2-body repulsion which is asymptotically of 2D-Coulomb type in the limit of small 't~Hooft coupling.
\item The model will be shown to rigorously admit an asymptotic expansion at $N\to \infty$ \cite{Borot:2013pda,Borot:2013qs}, whose terms are determined recursively from the planar resolvent and connected planer 2-point function (or, equivalently, from a certain equivariant version thereof) through the Eynard--Orantin topological recursion \cite{Eynard:2007kz,Borot:2013lpa}.
\item The initial data in the topological recursion will be shown to solve uniquely certain non-local Riemann--Hilbert problems. The boundary value problem for the planar resolvent had previously appeared in closely-related form in previous work on the exact solution of the six-vertex model on a random lattice \cite{Kostov:1999qx,Kazakov:1998ji,Dijkgraaf:2002dh,Zakany:2018dio}.
\item The solutions of the boundary value problems for the planar resolvent and connected 2-point function will then be shown to coincide with the disk and annulus  Eynard--Orantin differentials (in the conifold polarisation) associated to the Hori--Iqbal--Vafa mirror family of local $\bbP^2$ \cite{Hori:2000ck}, i.e. the universal family over the modular curve $Y_1(3)$, up to a certain
automorphism 
of the 
spectral curve data.
\eit 
Invoking the homothety covariance properties of the closed sector of the topological recursion \cite{Eynard:2007kz,Hock:2025wlm} together with the proof of the ``remodelling'' conjecture for $X=K_{\bbP^2}$ \cite{Eynard:2012nj,Bouchard:2007ys,Fang:2016svw}, the above implies that the conifold Gro\-mov--Witten potentials equate, at every genus, to a perturbation of the coefficients of the large rank expansion of the GUE free energy starting at order $\cO(t_{\rm CF}^0)$ (\cref{thm:mm=gw}). The statement of the Conifold Gap Conjecture then follows via \eqref{eq:GUEexp}.

\subsubsection{Relation to other work} The `conifold mean-field model' considered here is closely related to, and indeed conceptually inspired by, the convergent one-cut matrix models of Cauchy type introduced in beautiful work of Mari\~no and Zakany \cite{Marino:2015ixa,Zakany:2018dio} as part of their broader study of the  spectral theory/topological strings correspondence \cite{Grassi:2014zfa}. 
Although both the 1-body potential and 2-body interaction are superficially quite different here, I believe that the asymptotics of the partition function of the model in this paper would reproduce
that of the eigenvalue integrals considered in \cite{Marino:2015ixa,Zakany:2018dio} to all orders in a $1/N$ expansion around $N=\infty$ up to correction terms of $\cO(\re^{-N})$. In particular, the two models would share the same ribbon-graph expansion. From a mathematical point of view, much of the virtue of the model introduced here derives from the possibility to manifestly exhibit the convexity properties required for the loop equation technology of \cite{Borot:2013pda,Borot:2013lpa} to work in the setup of a convergent generalised Coulomb gas: in particular, this allows to rigorously deduce that a unique equilibrium measure exists and it has connected support \cite{Borot:2013pda}; that an asymptotic expansion of topological type exists \cite{Borot:2013qs}; and especially, that linear and quadratic loop equations rigorously hold \cite{Borot:2013lpa} and are solved by the Eynard--Orantin topological recursion.

\subsubsection{Organisation of the paper} The rest of the paper is organised as follows. In \cref{sec:mf} I will give a quick overview of general Coulomb-type ensembles subject to convex and real-analytic interactions; I'll review the rigorous results known both on the structure of their large-$N$ expansion \cite{Borot:2013pda} and the loop-equations approach to the large-$N$ asymptotics via the Eynard--Orantin topological recursion \cite{Borot:2013lpa}. The specific mean-field ensemble of interest for this paper is introduced in \cref{sec:cm}, where it is proved that for sufficiently small 't Hooft coupling the model checks all the positivity conditions to ensure that the loop equations ideology of \cite{Borot:2013pda,Borot:2013lpa} rigorously applies. The large-$N$ expansion of the model is then shown to be solved by the closed sector of the Eynard--Orantin recursion applied to a particular family of elliptic spectral curves. These are then related in \cref{sec:lp2} to the 
local Calabi--Yau mirror geometry of $K_{\bbP^2}$ upon matching the planar resolvent with the Aganagic--Vafa disk amplitude for an outer Lagrangian brane in $K_{\bbP^2}$ at zero framing, up to an explicit homothety of the spectral data; the planar connected 2-point function with the corresponding annulus amplitude; and the Torelli marking of the random model with the vanishing cycle of the mirror family at the conifold point. The ``remodelled B-model'' formalism applied to $X=K_{\bbP^2}$ then entails that the Laurent expansion of $\mathrm{GW}_g^{\rm CF}$ near the conifold point returns 
(up to suitable scaling)
the genus-$g$ free energy of the GUE up to $\cO(1)$ in $t_{\rm CF}$, concluding the proof.

\subsection*{Acknowledgements} 
I was pushed to think about a proof the Conifold Gap for $K_{\bbP^2}$ during the early stages of my collaboration 
with Y.~Sch\"uler \cite{Brini:2024gtn}. I am thankful to him for motivating me to get this started and -- quite a long time later -- to get it done. I also greatly benefited from helpful correspondence and discussions with 
K.~Iwaki, A.~Klemm, M.~Mari\~no, N.~Orantin and S.~Zakany. This work was supported by the EPSRC grants ref.~EP/S003657/2 and UKRI2394.

\section{Review of mean-field models at large $N$}
\label{sec:mf}
\subsection{Analytic strictly convex pairs}
In the following, for a measurable set $\mathsf{B} \subset \bbR^n$, we will write  $\cM^+(\mathsf{B})$ for the convex set of probability measures on $\mathsf{B}$ and $\cM^0(\mathsf{B})$ for the set of signed measures on $\mathsf{B}$ with vanishing total mass. 

Let $N\in \bbZ_{>0}$, $t\in \bbR_{>0}$, and $\mathsf{A} \subset \bbR$ be an open subset of the real line. Given 
$(V,L)$ with
$V\in \mathrm{C}^2(\mathsf{A})$, $\,  L\in \mathrm{C}^2(\mathsf{A}^2)$,
we will be interested in the absolutely continuous measure $\mu_{(V,L)} \in \cM^+(\mathsf{A}^N)$
defined by
\beq
\rd \mu_{(V,L)} \coloneqq \frac{1}{(2\pi)^N N!}
\exp\l(-\frac{N}{t} \sum_{i=1}^N V(x_i)\r)\, \prod_{i<j} (x_i-x_j)^2 \prod_{i,j} \exp\l(L(x_i,x_j)\r) \rd^N x
\label{eq:measureVL}\,.
\eeq
Let $\bbE_{\mu_{(V,L)}}(f)$ be the expectation value of a $\mu_{(V,L)}$-measurable function on $\mathsf{A}^N$. We shall write
\[
Z^{(V,L)} \coloneqq \bbE_{\mu_{(V,L)}}(1) = 
\int_{\mathsf{A}^N} \rd\mu_{(V,L)}\,,
\]
\[
W_{k}^{{(V,L)}}(w_1, \dots, w_k) \coloneqq 
\frac{\de^k}{\de s_1 \dots \de s_k}
\log \bbE_{(V,L)}\l[\exp\l(\sum_{i=1}^k s_i \sum_{j=1}^N (w_i-x_j)^{-1}\r) \r]\Bigg|_{\mathbf{s}=0}\,,
\]
for, respectively, the total mass of \eqref{eq:measureVL},
and the Stieltjes transform of its $k^{\rm th}$-order cumulants. For all $k>0$, the latter are holomorphic symmetric functions of $(w_1, \dots, w_k)\in (\widehat{\bbC} \setminus \mathsf{A})^k$ \cite{Borot:2013pda}.


\begin{defn}
\label{def:convex}
We say that the pair $(V,L)$ is \emph{strictly convex} if the following holds:
\begin{description}
\item[Strong confinement:] there exists $H : \mathsf{A} \to \mathsf{A}$ such that, for $\lVert (x,y)\rVert \gg 1$:
\bit
\item $2 L(x,y)-V(x)-V(y) \leq  -H(x)-H(y)\,; $
\item $\limsup_{x \to a} \frac{H(x)}{\ln |x-a|} <-t $ for all $a \in \de \mathsf{A}$; 
\item $\liminf_{|x| \to \infty} \frac{H(x)}{\ln |x|} >t $; 
\end{itemize}
\item[1-body convexity:] $V$ is convex throughout $\mathsf{A}$;
\item[2-body convexity:] 
for every $\nu \in \cM_0(\mathsf{A})$, the functional
\[
\bary{rcl}
Q : \cM_0(\mathsf{A})  & \longrightarrow & \bbR \\
\nu & \longrightarrow & \int_{\mathsf{A}^2} \big[\log|x-y|+L(x,y)\big] \rd\nu(x)\rd\nu(y) 
\eary
\]
is non-positive, and only vanishes for $\nu=0$.
\end{description}
If $(V,L)$ is strictly convex, the strong confinement property implies, in particular, that the measure $\mu_{(V,L)}$ has finite mass on $\mathsf{A}^N$:
\[
0<Z^{(V,L)} < + \infty\,.
\]
We will say that $(V,L)$ is an \emph{analytic strictly convex pair} (henceforth ASCP, in short) if, furthermore, both $V$ and $L$ are real-analytic functions of their respective arguments.
\end{defn}

\subsubsection{Equilibrium measures}

\begin{prop}[\cite{Borot:2013lpa,MR3265167,Borot:2013pda}]
Let $(V,L)$ be an ASCP. Then the \emph{free energy functional} on $\cM_+(\mathsf{A})$,
\[
\bary{rcl}
\cF^{(V,L)} : \cM_+(\mathsf{A})  & \longrightarrow & \bbR\,,\\
\nu  & \longrightarrow & 
\int_\mathsf{A} V(x) \rd\nu(x)-t \int_{\mathsf{A}^2} \big[\log|x-y|+L(x,y)\big]\rd\nu(x)\rd\nu(y)\,,
\eary
\]
has a unique minimum $\nu_{(V,L)}$, with support on a closed interval
\[
\mathfrak{I}_0 \coloneqq [x_-,x_+] \coloneqq \overline{\mathrm{supp}(\nu_{(V,L)})} \subset \mathsf{A}\,.\]
Moreover, for all $t>0$, there exists $C_{(V,L)} \in \bbR$ such that, for all $x \in \mathsf{A}$,
\beq
V(x)-2 t\int_\mathsf{A} \big[ \log|x-y|+ L(x,y) \big] \rd\nu_{(V,L)}(y)  \geq C_{(V,L)}\,,
\label{eq:robin}
\eeq
with equality iff $x \in \mathfrak{I}_0$. 
\label{prop:minimum}
\end{prop}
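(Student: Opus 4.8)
The plan is to derive \cref{prop:minimum} by specialising the equilibrium-measure theory of \cite{Borot:2013pda,Borot:2013lpa,MR3265167} to an ASCP; the only real work is to check that the three clauses of \cref{def:convex} supply exactly the input those references require, after which each assertion follows by standard potential-theoretic arguments that I outline below.

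For \emph{existence} I would run the direct method on $\cM_+(\mathsf{A})$ with the weak-$*$ topology. The \textbf{Strong confinement} clause makes the sublevel sets of $\cF^{(V,L)}$ tight: along any sequence leaking mass towards $\de\mathsf{A}$ or to infinity, the bound $2L(x,y)-V(x)-V(y)\le -H(x)-H(y)$ together with the two one-sided growth estimates on $H$ forces $\cF^{(V,L)}\to+\infty$, so minimising sequences are precompact by Prokhorov. One then checks that $\cF^{(V,L)}$ is weak-$*$ lower semicontinuous — the term $\int V\,\rd\nu$ because $V$ is continuous and bounded below on the relevant regions, and the repulsive energy $-t\iint[\log|x-y|+L(x,y)]\,\rd\nu\,\rd\nu$ by the customary truncation of $-\log|x-y|$, which is l.s.c.\ and bounded below on compacts, $L$ being continuous — and concludes that a minimiser $\nu_{(V,L)}$ exists. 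For \emph{uniqueness} I would invoke strict convexity: along the segment $\nu_s\coloneqq(1-s)\nu_0+s\nu_1$ joining two finite-energy competitors, the linear part is affine in $s$ and $\tfrac{\rd^2}{\rd s^2}\cF^{(V,L)}(\nu_s)=-2t\,Q(\nu_1-\nu_0)>0$ unless $\nu_1=\nu_0$ by \textbf{2-body convexity}, so $\nu_{(V,L)}$ is unique.

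The \emph{Euler--Lagrange inequality} \eqref{eq:robin} I would extract from the one-sided perturbation $\epsilon\mapsto\cF^{(V,L)}\big((1-\epsilon)\nu_{(V,L)}+\epsilon\sigma\big)$, with $\sigma\in\cM_+(\mathsf{A})$ compactly supported of finite energy: its right derivative at $\epsilon=0$ is $\ge 0$, which on expansion reads $\int_\mathsf{A} U\,\rd\sigma\ge\int_\mathsf{A} U\,\rd\nu_{(V,L)}$ for the effective potential $U(x)\coloneqq V(x)-2t\int_\mathsf{A}[\log|x-y|+L(x,y)]\,\rd\nu_{(V,L)}(y)$; testing against approximate point masses and using continuity of $U$ on $\mathsf{A}$ (from real-analyticity of $V$ and $L$, plus the regularity of $\nu_{(V,L)}$) upgrades this to $U\ge C_{(V,L)}$ throughout $\mathsf{A}$, with $C_{(V,L)}\coloneqq\int_\mathsf{A} U\,\rd\nu_{(V,L)}$, and to equality on $\mathrm{supp}(\nu_{(V,L)})$.

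The genuinely non-formal point — and the one where \textbf{1-body convexity} of $V$ is indispensable — is \emph{connectedness of the support}, which I would prove by contradiction. If $\mathrm{supp}(\nu_{(V,L)})$ omitted an open interval $(c,d)\subset\mathsf{A}$ with $c,d\in\mathrm{supp}(\nu_{(V,L)})$, then on $(c,d)$ the measure sits entirely away from $x$, so $U$ is smooth there and $U''(x)=V''(x)+2t\int_\mathsf{A}\frac{\rd\nu_{(V,L)}(y)}{(x-y)^2}-2t\int_\mathsf{A}\de_x^2 L(x,y)\,\rd\nu_{(V,L)}(y)$; the first two terms are $\ge 0$ and $>0$, and the pointwise curvature control that negative-definiteness of $\log|x-y|+L(x,y)$ on $\cM_0(\mathsf{A})$ yields along the support pins down the last, so that $U''>0$ throughout $(c,d)$. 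But then $U$ is strictly convex on $[c,d]$ with $U(c)=U(d)=C_{(V,L)}$, forcing $U<C_{(V,L)}$ somewhere in $(c,d)$ — impossible against $U\ge C_{(V,L)}$. Hence $\mathrm{supp}(\nu_{(V,L)})$ is an interval, $\mathfrak{I}_0=[x_-,x_+]$ its closure, and \textbf{Strong confinement} (the blow-up of $H$ at $\de\mathsf{A}$) keeps $x_\pm$ strictly inside $\mathsf{A}$. I expect this last step to be the main obstacle in any from-scratch argument: converting the \emph{global} convexity hypothesis into the \emph{pointwise} curvature bound on $U$ needed to exclude gaps is precisely the technical heart of \cite{Borot:2013pda}, and for the present paper it will suffice to quote it once the conifold ensemble has been shown to be an ASCP in \cref{sec:cm}.
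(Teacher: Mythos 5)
Your strategy is in substance the paper's own: the paper offers no independent proof of \cref{prop:minimum}, but observes that for $L=0$ the statement is the classical Gauss--Frostman lemma (plus the known fact \cite{MR1702716} that real-analyticity and convexity of $V$ force a connected support), and that for $L\neq 0$ the \textbf{Strong confinement} and \textbf{2-body convexity} clauses of \cref{def:convex} are precisely the hypotheses under which \cite{Borot:2013pda,Borot:2013lpa,MR3265167} establish the same conclusions. Your existence step (direct method, tightness from strong confinement, lower semicontinuity by truncating the logarithm), your uniqueness step (strict convexity of the quadratic part via $Q(\nu_1-\nu_0)<0$), and your Euler--Lagrange derivation are exactly the standard arguments those references run, and your closing decision to quote \cite{Borot:2013pda} for the connectedness of the support is what the paper in fact does.

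The one point to correct is the intermediate claim in your gap-exclusion sketch, namely that negative-definiteness of $\log|x-y|+L(x,y)$ on $\cM_0(\mathsf{A})$ ``yields pointwise curvature control'' and hence $U''>0$ on a putative gap $(c,d)$. That deduction does not follow from the ASCP axioms as stated: the quadratic-form condition gives no pointwise bound on $\int_{\mathsf{A}}\de_x^2 L(x,y)\,\rd\nu_{(V,L)}(y)$ at a point $x$ outside the support (test measures concentrated near such an $x$ produce cross terms with the support that cannot be separated, and the diagonal logarithmic singularity prevents second-difference test measures from isolating $\de_x^2 L$). What the gap argument genuinely needs is pointwise concavity of the two-body kernel in each variable (equivalently, in suitable coordinates, concavity in the difference of the arguments); this is how the cited works proceed, and it is available in the concrete conifold model after the substitution $x=\re^{Z}$, where the kernel becomes the strictly concave $\lambda(Z-W)$ of \eqref{eq:lambdaz} and $\de_Z^2\Phi=x/(1-x+x^2)>0$ by \eqref{eq:phi}. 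The same remark applies to the ``only if'' part of \eqref{eq:robin} (strict inequality off $\mathfrak{I}_0$), which you do not address and which also rests on this pointwise convexity. Since you, like the paper, ultimately defer exactly this step to \cite{Borot:2013pda}, the proposal stands, but the sentence suggesting that 2-body convexity alone supplies the pointwise bound should be removed.
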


When $L=0$, the existence and uniqueness of the minimising measure together with the bound \eqref{eq:robin} is a textbook result in potential theory (Gauss--Frostman lemma, see \cite[Ch.~1]{MR1485778}). Real-analyticity and convexity of $V$ further imply that $\nu_{(V,L)}$ is supported on a compact connected interval \cite{MR1702716}. The result holds verbatim for pairs $(V,L)$ with $L\neq 0$ upon assuming the ({\bf Strong confinement}) and ({\bf 2-body convexity}) parts of \cref{def:convex}: see
\cite{Borot:2013lpa,MR3265167,Borot:2013pda}.

\begin{prop}[Off-criticality, \cite{Borot:2013pda}]
Let $(V,L)$ be an ASCP. The Radon--Ni\-ko\-dym derivative of $\nu_{(V,L)}$ w.r.t. the Lebesgue measure has the form  
\[
\rho_{(V,L)}(x) \coloneqq \frac{\rd \nu_{(V,L)}}{\rd x} =  \mathbf{1}_{\mathfrak{I}_0}(x) M_{(V,L)}(x)  \sqrt{(x-x_-)(x_+-x)}\,,
\]
where $\mathbf{1}_{\mathsf{B}}(x)$ denotes the characteristic function of $\mathsf{B} \subset \bbR$, 
$x_+(t)>x_-(t)$ are analytic in $t^{1/2}$ around $t=0$ with $x_+(0)=x_-(0)$,
and $M_{(V,L)}(x)$ is real-analytic and non-negative on $\mathfrak{I}_0$. 
\label{prop:offcrit}
\end{prop}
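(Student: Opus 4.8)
The proposition is a structural fact about one-cut equilibrium measures, essentially that of \cite{Borot:2013pda} (with the local case $L=0$ going back to \cite{MR1702716}); the plan is to recover it in two parts, the edge profile and the $t\to 0^{+}$ behaviour of the endpoints. First I would pass from the variational characterisation of $\nu_{(V,L)}$ to a scalar Riemann--Hilbert problem for its resolvent. By \cref{prop:minimum} the inequality \eqref{eq:robin} is an equality on $\mathfrak{I}_0=[x_-,x_+]$; differentiating in $x$ on the interior gives the singular integral equation
\[
V'(x)=2t\,\mathrm{p.v.}\!\int_{\mathsf{A}}\frac{\rd\nu_{(V,L)}(y)}{x-y}+2t\int_{\mathsf{A}}\partial_x L(x,y)\,\rd\nu_{(V,L)}(y),\qquad x\in(x_-,x_+).
\]
Since $L$ is real-analytic on $\mathsf{A}^2$ and $\nu_{(V,L)}$ is a compactly supported probability measure, the last integral extends to a function holomorphic on a complex neighbourhood $U$ of $\mathfrak{I}_0$, so the \emph{effective potential} $\widetilde{V}'\coloneqq V'-2t\!\int_{\mathsf{A}}\partial_x L(\cdot,y)\,\rd\nu_{(V,L)}(y)$ is holomorphic on $U$. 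Introducing the resolvent $W\coloneqq W^{(V,L)}_1$, which is holomorphic on $\widehat{\bbC}\setminus\mathfrak{I}_0$ with $W(w)=w^{-1}+O(w^{-2})$ at infinity, the displayed equation becomes the additive jump relation $W_+(x)+W_-(x)=\widetilde{V}'(x)/t$ on $\mathfrak{I}_0$.

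Next I would solve this by the standard one-cut Ansatz
\[
W(w)=\frac{1}{2t}\Big(\widetilde{V}'(w)-R(w)\,\sigma(w)\Big),\qquad \sigma(w)\coloneqq\sqrt{(w-x_-)(w-x_+)},
\]
with $\sigma$ cut along $\mathfrak{I}_0$ and $\sigma(w)\sim w$ at infinity and $R$ holomorphic on $U$; the two scalar conditions expressing that $W$ decays at infinity and that $\nu_{(V,L)}$ has unit mass then determine $x_\pm=x_\pm(t)$. This is consistent because \cref{prop:minimum} already grants a unique minimiser, so the construction merely computes its resolvent and the apparent self-reference through $\widetilde{V}'$ is harmless. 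Taking the discontinuity across $\mathfrak{I}_0$ yields $\rho_{(V,L)}(x)=\mathbf{1}_{\mathfrak{I}_0}(x)\,M_{(V,L)}(x)\sqrt{(x-x_-)(x_+-x)}$ with $M_{(V,L)}=R/(2\pi t)$, manifestly real-analytic near $\mathfrak{I}_0$, and $M_{(V,L)}\geq 0$ on $\mathfrak{I}_0$ since it multiplies the density of a positive measure. I would then record the sharper \emph{off-criticality} statement $M_{(V,L)}>0$ on $[x_-,x_+]$: this is where the 1-body and 2-body convexity of \cref{def:convex} enter, a zero of $M_{(V,L)}$ at an endpoint producing by a local perturbation a competitor measure strictly lowering $\cF^{(V,L)}$, contradicting minimality (the argument of \cite{MR1702716} for $L=0$, carried over to real-analytic convex $L$ in \cite{Borot:2013pda}). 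Only $M_{(V,L)}\geq 0$ is needed for the displayed claim, but strict positivity at the edges underwrites the last step.

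For the endpoints as $t\to 0^{+}$, note that in $\cF^{(V,L)}$ both the logarithmic and the $L$-interaction carry the factor $t$, so the functional $\Gamma$-converges on $\cM_+(\mathsf{A})$ to $\nu\mapsto\int_{\mathsf{A}}V\,\rd\nu$, minimised uniquely by $\delta_{x_\ast}$ where $x_\ast$ is the (non-degenerate) minimum of the convex, confining $V$; hence $\nu_{(V,L)}\rightharpoonup\delta_{x_\ast}$ and $x_\pm(t)\to x_\ast$. I would then rewrite the two endpoint equations in the variables $c\coloneqq\tfrac12(x_-+x_+)$ and $\varrho\coloneqq\tfrac14(x_+-x_-)^2$, so that $\sigma(w)^2=(w-c)^2-\varrho$ is analytic in $(w,c,\varrho)$; expanding the defining contour integrals in powers of $\varrho$ around $\varrho=0$ turns the pair into an analytic system in $(c,\varrho,t)$ whose solution at $t=0$ is $(c,\varrho)=(x_\ast,0)$ and whose Jacobian in $(c,\varrho)$ there is non-degenerate precisely because $V''(x_\ast)>0$. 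The analytic implicit function theorem then gives $c(t),\varrho(t)$ analytic in $t$ near $0$ with $\varrho(0)=0$, whence $x_\pm(t)=c(t)\pm\sqrt{\varrho(t)}$ is analytic in $t^{1/2}$ with $x_+(0)=x_-(0)=x_\ast$, completing the proof. The self-consistency through $\widetilde{V}'$ folds into this system, or can be disposed of beforehand by a contraction estimate valid for small $t$ since the offending term is $O(t)$.

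The hard part is that both delicate points are instances of \emph{off-criticality} having to survive the non-local interaction $L$: strict positivity of $M_{(V,L)}$ at the endpoints, without which the edge exponent could jump to $3/2$; and the non-degeneracy $V''(x_\ast)>0$, without which the natural scaling near $t=0$ would be $t^{1/\ell}$ for $\ell$ the order of vanishing of $V'$ at $x_\ast$ and the implicit function theorem Jacobian would degenerate. Both are exactly what the convexity hypotheses of \cref{def:convex}, as opposed to mere confinement, are there to supply, and transporting the classical $L=0$ maximum-principle and obstacle-problem arguments to a real-analytic convex two-body interaction is the technical heart of the matter; I would quote \cite{Borot:2013pda,Borot:2013lpa} for this.
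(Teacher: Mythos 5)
Your proposal is correct in outline, but it follows a genuinely different route from the paper, whose proof of this proposition is essentially a citation: the square-root form of the density with real-analytic, non-negative $M_{(V,L)}$ is obtained by specialising \cite[Lem.~2.5]{Borot:2013pda} to the case of compact connected support, which is supplied by \cref{prop:minimum}, and the $t^{1/2}$-analyticity of the endpoints is attributed to the Gaussian/Wigner semi-circle behaviour of the model at small 't~Hooft coupling. You instead reconstruct the content of that lemma: differentiating the equality case of \eqref{eq:robin} to get the additive jump relation, absorbing the $L$-term into an effective potential $\widetilde V'$ holomorphic near $\mathfrak{I}_0$, solving by the one-cut ansatz, and then obtaining the endpoint behaviour from an implicit-function-theorem system in $(c,\varrho)=(\tfrac12(x_-+x_+),\tfrac14(x_+-x_-)^2)$. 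This is more self-contained and makes visible exactly where real-analyticity, non-negativity and the square-root edge come from, at the price of re-deriving technical steps (edge regularity of the resolvent, off-criticality at the endpoints) that the paper simply outsources to \cite{Borot:2013pda,MR1702716}; conversely, the paper's citation-based argument is shorter but leaves the $t^{1/2}$ statement justified only by an appeal to the GUE limit.

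Two caveats on your endpoint analysis. First, the Jacobian non-degeneracy you invoke requires $V''(x_\ast)>0$ at the concentration point, and this is \emph{not} implied by the ASCP axioms as written: ({\bf 1-body convexity}) in \cref{def:convex} asks only that $V$ be convex, so a quartic-type minimum would break the $t^{1/2}$ scaling you (and the proposition) assert. The paper has the same tacit assumption hidden in its appeal to the semi-circle law; it is harmless for the only case used downstream, $V=\Phi$, where $\Phi''(1)=1$ by \eqref{eq:phiasym}, but you should state the non-degeneracy explicitly rather than credit it to convexity. Second, the self-reference through $\widetilde V'$ is indeed harmless for the \emph{form} of the density at fixed $t$, but for joint analyticity in $t^{1/2}$ the fixed-point/contraction step you mention must actually be carried out, since $\widetilde V'$ depends on $\nu_{(V,L)}$ whose $t$-dependence is precisely what is being determined; this is the point at which the paper leans on the quantitative control of \cite{Borot:2013pda} rather than on an explicit argument.
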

The statement is implied by \cite[Lem.~2.5]{Borot:2013pda}, all of whose hypothesis are satisfied by $(V,L)$ being an ASCP, once it is specialised to the case in which the support of the measure is compact and connected: this is ensured by \cref{prop:minimum}. The analyticity in $t^{1/2}$ follows, in turn, from the Wigner's semi-circle law for the planar GUE \cite{Eynard:2015aea}. \\

\subsection{Asymptotic expansions at  $N=\infty$}

For $(V,L)$ an ASCP, the total mass and cumulant generating functions of $\mu_{(V,L)}$ admit an asymptotic expansion as $N \to \infty$ of topological type. This follows from \cite[Cor.~3.4]{Borot:2013lpa} and \cite[Cor.~7.2 and 7.5]{Borot:2013pda}, whose assumed conditions \cite[Hyp.~3.2 and 3.5]{Borot:2013pda} are implied by the analyticity and strict convexity of $(V,L)$.

\begin{prop}[\cite{Borot:2013pda,Borot:2013lpa}] Let $(V,L)$ be an ASCP. The following asymptotic expansions hold as $N\to \infty$:
%
\begin{align}
\label{eq:logZVL}
\log Z^{(V,L)} \simeq &\, 
- \frac{1}{12} \log N +\sum_{g \geq 0} \l(\frac{N}{t}\r)^{2-2g} F_g^{(V,L)}(t)\,, \\
W_k^{(V,L)}(x_1, \dots, x_k) \simeq & \sum_{g \geq 0} \l(\frac{N}{t}\r)^{2-2g-k}\, W^{(V,L)}_{g,k}(x_1, \dots, x_k)\,,
\end{align}
with $W_{g,k}^{(V,L)}$ symmetric and  holomorphic for $(x_1, \dots, x_k) \in \big(\widehat{\bbC}\setminus (\mathfrak{I}_0 \cup \partial \mathsf{A})\big)^k$. In particular, in terms of the free energy functional, we have
\[
F_0^{(V,L)} = \cF^{(V,L)}[\nu_{(V,L)}]\,.
\]
%
\label{prop:asympgen}
\end{prop}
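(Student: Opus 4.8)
The plan is to treat this as an application of the general large-$N$ analysis of $\beta=2$ Coulomb gases with a regular two-body interaction developed in \cite{Borot:2013pda} (for $Z^{(V,L)}$ and the equilibrium measure) and refined to the all-order expansion of the correlators in \cite{Borot:2013lpa,MR3265167}, rather than to rederive the expansion from scratch. Thus I would verify that an ASCP satisfies the hypotheses under which those theorems apply --- essentially \cite[Hyp.~3.2 and 3.5]{Borot:2013pda} --- and then invoke \cite[Cor.~7.2 and 7.5]{Borot:2013pda} together with \cite[Cor.~3.4]{Borot:2013lpa}. Most of the non-trivial inputs have already been isolated in the preceding discussion, so the work is in assembling them and matching conventions.

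First I would cast $\mu_{(V,L)}$ in the standard form used in the cited works: the $\beta=2$ Vandermonde $\prod_{i<j}(x_i-x_j)^2$, the one-body weight rescaled by $N/t$, and the pairwise factor $\exp(\sum_{i,j}L(x_i,x_j))$ --- the diagonal terms $L(x_i,x_i)$ contributing only an $O(N)$ shift to $\log Z^{(V,L)}$ that is harmless and absorbed into the one-body potential. The parameter $t>0$ plays the role of the 't~Hooft/temperature scaling, so the natural expansion variable is $N/t$ as in \eqref{eq:logZVL}. The (\textbf{Strong confinement}) clause of \cref{def:convex} is exactly what gives $0<Z^{(V,L)}<\infty$ and, more to the point, the exponential large-deviation estimates that localise the empirical measure to a fixed compact subset of $\mathsf{A}$ and forbid escape of mass through $\partial\mathsf{A}$; this is what promotes the holomorphy of $W_k^{(V,L)}$ from $(\widehat{\bbC}\setminus\mathsf{A})^k$ to $(\widehat{\bbC}\setminus(\mathfrak{I}_0\cup\partial\mathsf{A}))^k$ at each order, i.e.\ the domain claimed for the $W_{g,k}^{(V,L)}$.

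The analytic heart is the invertibility, with controlled norm, of the linearised master operator obtained by expanding the energy around $\nu_{(V,L)}$: one needs a priori $O(\sqrt{N})$ bounds on the centred linear statistics $\sum_j(w-x_j)^{-1}-N\int(w-y)^{-1}\,\rd\nu_{(V,L)}(y)$, then bootstrapped to $O(1)$, $O(1/N)$, and so on. These follow because the quadratic form $\nu\mapsto-\int\!\!\int[\log|x-y|+L(x,y)]\,\rd\nu(x)\,\rd\nu(y)$ is positive definite on $\cM_0(\mathsf{A})$ --- precisely the (\textbf{2-body convexity}) clause --- while (\textbf{1-body convexity}) together with the off-criticality of \cref{prop:offcrit} (square-root edges, analytic $x_\pm(t)$, analytic non-vanishing density) controls the operator near the soft edges of $\mathfrak{I}_0$. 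Real-analyticity of $V$ and $L$ then upgrades these estimates to a genuine $1/N$-asymptotic expansion, each coefficient extending holomorphically off $\mathfrak{I}_0$. Feeding this into the quoted corollaries gives the two displayed expansions; symmetry of $W_{g,k}^{(V,L)}$ is inherited from $W_k^{(V,L)}$ being a cumulant generating function, and $F_0^{(V,L)}=\cF^{(V,L)}[\nu_{(V,L)}]$ is the leading-order Laplace principle of \cref{prop:minimum}. The universal $-\frac{1}{12}\log N$ term is the $\beta=2$ Gaussian-fluctuation contribution: it is the one-loop determinant of that positive quadratic form around $\nu_{(V,L)}$, hence independent of $(V,L)$, and can be read off from the GUE specialisation \eqref{eq:GUEexp}.

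The step I expect to require real care --- although here it has effectively been arranged to hold by the definition of ASCP --- is checking that ``strictly convex pair'' is genuinely the right repackaging of \cite[Hyp.~3.2 and 3.5]{Borot:2013pda}: in particular that (\textbf{2-body convexity}) delivers coercivity of the Hessian \emph{uniformly} as $t\to0$, precisely while the edges $x_\pm(t)$ coalesce, so that the a priori estimates and hence the topological expansion hold with constants controlled along the whole family parametrised by $t$. The remaining point is bookkeeping: tracking the normalisation factors (the $((2\pi)^{N}N!)^{-1}$ prefactor, the $N/t$ rescaling, the $\beta=2$ versus general-$\beta$ conventions, the placement of the diagonal $L(x_i,x_i)$ terms) between \cite{Borot:2013pda}, \cite{Borot:2013lpa} and the present notation, so that the $F_g^{(V,L)}$ and $W_{g,k}^{(V,L)}$ appear with exactly the powers of $N/t$ in the statement.
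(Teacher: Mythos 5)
Your proposal matches the paper's own (very brief) justification: the paper simply notes that the claim follows from \cite[Cor.~3.4]{Borot:2013lpa} and \cite[Cor.~7.2 and 7.5]{Borot:2013pda} once one checks that analyticity and strict convexity of $(V,L)$ imply \cite[Hyp.~3.2 and 3.5]{Borot:2013pda}, which is exactly the structure of your argument. Your additional commentary on the internal mechanism of those references (large deviations, master-operator invertibility, uniformity as $t\to 0$) goes beyond what the paper records but is consistent with it; note only that the proposition is stated at fixed $t>0$, the small-$t$ behaviour being handled separately via the GUE decomposition in the subsequent discussion.
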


Upon rewriting
\[
\log Z^{(V,L)} = \log {Z^{(\frac{x^2}{2},0)}}+\log \frac{Z^{(V,L)}}{Z^{(\frac{x^2}{2},0)}}
\]
the coefficients of the $1/N$-expansion of second term have a finite limit for $t\to 0$, as can be seen by rescaling $x_i \to \sqrt{t} x_i$, using strict convexity of $(V,L)$, and expanding to leading order near $t \to 0$ (see e.g. \cite{Borot:2013qs}). In particular, for fixed $g \in \bbZ_{\geq 0}$, the polar part of the small $t$-asymptotics of $F_g$ is governed by the $1/N$ expansion of the free energy of the Gaussian Unitary Ensemble
\[
\log {Z^{(\frac{x^2}{2},0)}} = \log 
G(N+1) -\frac{N}{2}\log (2\pi) +\frac{N^2}{2} \log \l(\frac{t}{N}\r)\,,
\]
where $G(z)$ denotes the Barnes~G-function, whose values over the positive integers is the superfactorial \[
G(N+1)= \prod_{j=1}^{N-1} j!\,.\]
From the known asymptotics of the Barnes $G$-function \cite{MR2723248},
\[
\log G(N+1) \sim \frac{N^2}{2} \l(\log N-\frac{3}{2}\r) -\frac{1}{12} \log N +\frac{N}{2}\log (2\pi)+\zeta'(-1)+\sum_{k=1}^{\infty} \frac{B_{2k+2}N^{-2k}}{4k(k+1)} 
\,,
\]
we deduce the following
\begin{prop}[Gap property]
Let $(V,L)$ be an ASCP and $g\in \bbZ_{\geq 2}$. We have, near $t=0$, that
\[
F_g^{(V,L)}(t) = F_g^{(\frac{x^2}{2},0)}(t)+\cO(t^0) = \frac{B_{2g}}{2g(2g-2)}\frac{1}{t^{2g-2}}+\cO(t^0)\,.
\]
\label{prop:HZ1}
\end{prop}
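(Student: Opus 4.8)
The strategy is to carry out the rescaling sketched in the paragraph preceding \cref{prop:HZ1} and then to make the error term quantitative. First I would normalise $(V,L)$: translating $\mathsf{A}$ we may assume that the minimiser of $V$ (unique by strict convexity and real-analyticity) is the origin, and subtracting a constant from $V$ that $V(0)=0$; neither operation changes $F_g^{(V,L)}$ for $g\ge 1$ nor the ASCP property. By \cref{prop:offcrit} the endpoints $x_\pm(t)$ of $\mathfrak{I}_0$ collapse to $0$ at rate $t^{1/2}$, which forces $V''(0)>0$ (a soft edge). The substitution $x_i=\sqrt t\,y_i$ in \eqref{eq:measureVL} then gives
\[
Z^{(V,L)} = \frac{t^{N^2/2}}{(2\pi)^N N!}\int \re^{-N\sum_i \widehat{V}_t(y_i)+\sum_{i,j}\widehat{L}_t(y_i,y_j)}\prod_{i<j}(y_i-y_j)^2\,\rd^N y,
\]
with $\widehat{V}_t(y)= t^{-1}V(\sqrt t\,y)=\tfrac12 V''(0)y^2+\sum_{k\ge 3}\tfrac{t^{(k-2)/2}}{k!}V^{(k)}(0)y^k$ and $\widehat{L}_t(y,y')=L(\sqrt t\,y,\sqrt t\,y')$, both real-analytic in $t^{1/2}$ with $\widehat{V}_0(y)=\tfrac12 V''(0)y^2$, $\widehat{L}_0\equiv L(0,0)$. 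Running the same computation for $(\tfrac{x^2}{2},0)$ — where the rescaled integrand is literally $t$-independent — the prefactor $t^{N^2/2}(2\pi)^{-N}(N!)^{-1}$ cancels in the ratio $Z^{(V,L)}/Z^{(x^2/2,0)}$, whose numerator converges as $t\to 0$ to a rescaled Gaussian integral.

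Next I would feed this into the large-$N$ machinery twice. The rescaled pair $(\widehat V_t,\widehat L_t)$ is an ASCP at 't~Hooft coupling $1$, uniformly for small $t\ge 0$ (the confinement and convexity of $(V,L)$ descend to it), so \cref{prop:asympgen} yields $\log(\text{rescaled integral})\simeq -\tfrac{1}{12}\log N+\sum_{g\ge 0}N^{2-2g}\widehat F_g(t)$ with $\widehat F_g(0)$ equal to the pure-Gaussian value (only $\widehat F_0$ feels the rescaling of $V''(0)$ and the constant $L(0,0)$). Comparing with \eqref{eq:logZVL} — and observing that the prefactor $t^{N^2/2}(2\pi)^{-N}(N!)^{-1}$ contributes to $\log Z^{(V,L)}$ only at orders $N^2$, $N\log N$, $N$, $\log N$ and $N^{1-2k}$, none of which is $N^{2-2g}$ for $g\ge 2$ — gives $F_g^{(V,L)}(t)=t^{2-2g}\,\widehat F_g(t)$ for all $g\ge 2$. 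Applying the identical identity to $(\tfrac{x^2}{2},0)$ together with the asymptotics of $\log G(N+1)$ recalled before \cref{prop:HZ1} (equivalently \eqref{eq:GUEexp}) shows $F_g^{(x^2/2,0)}(t)=\tfrac{B_{2g}}{2g(2g-2)}t^{2-2g}$ \emph{exactly}, so that $\widehat F_g(0)=\tfrac{B_{2g}}{2g(2g-2)}$ and
\[
F_g^{(V,L)}(t)-F_g^{(x^2/2,0)}(t)=t^{2-2g}\Big(\widehat F_g(t)-\tfrac{B_{2g}}{2g(2g-2)}\Big).
\]

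It remains to show $\widehat F_g(t)-\tfrac{B_{2g}}{2g(2g-2)}=\cO(t^{2g-2})$ as $t\to 0$, so that the right-hand side is $\cO(t^0)$. This is the step where the scaling $t^{(k-2)/2}$ of the couplings of $\widehat V_t$ (and $t^{(p+q)/2}$ of those of $\widehat L_t$) enters essentially: mere continuity of $\widehat F_g$ at $t=0$ is \emph{not} enough, as it would only bound $t^{2-2g}(\widehat F_g(t)-\widehat F_g(0))=\cO(t^{5/2-2g})$, still singular for $g\ge 2$. I would argue through the stuffed-map interpretation of the $1/N$ expansion underpinning \cref{prop:asympgen}: a connected genus-$g$ stuffed map contributing to $\widehat F_g(t)-\widehat F_g(0)$ has at least one non-quadratic cell; writing $r$ for the number of ordinary $k_i$-gons and $s$ for the number of $2$-body cells of bidegrees $(p_j,q_j)$, one has $g=g_\Sigma+s$ with $g_\Sigma$ the genus of the underlying ribbon surface, and Euler's relation $\#\text{faces}=2-2g_\Sigma-r-2s+\tfrac12\big(\sum k_i+\sum(p_j+q_j)\big)\ge 1$ forces $\sum(k_i-2)+\sum(p_j+q_j)\ge 4g-2$; hence each such map carries $t^{\frac12(\sum(k_i-2)+\sum(p_j+q_j))}\ge t^{2g-1}$, and since this exponent equals $E-r\in\bbZ$ the function $\widehat F_g$ is in fact analytic in $t$ with $\widehat F_g(t)-\widehat F_g(0)=\cO(t^{2g-1})$. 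Thus $F_g^{(V,L)}(t)-F_g^{(x^2/2,0)}(t)=\cO(t)$, which is stronger than required. The \textbf{main obstacle} is the analytic one flagged above: verifying that the ASCP hypotheses of \cref{prop:asympgen} hold for the rescaled family $(\widehat V_t,\widehat L_t)$ uniformly down to $t=0$, so that the $1/N$ expansion and its coefficients depend (analytically) on $t$ through $t=0$ — this is exactly the content of the a-priori estimates of \cite{Borot:2013qs} invoked after \eqref{eq:logZVL}, with the stuffed-map bound above serving as the efficient route to the order of vanishing once that uniformity is available.
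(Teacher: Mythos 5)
Your proposal is correct and follows essentially the same route as the paper: rescale $x_i\to\sqrt{t}\,x_i$ to split $\log Z^{(V,L)}$ into the exactly solvable Gaussian piece (evaluated via the Barnes $G$-function asymptotics) plus a ratio whose genus-$g$ coefficients are controlled as $t\to 0$, which is precisely the argument sketched in the paragraph preceding \cref{prop:HZ1} with the control of the remainder delegated to \cite{Borot:2013qs}. Your added value is to make that last step quantitative — correctly noting that mere continuity of the rescaled coefficients $\widehat F_g(t)$ at $t=0$ would not suffice, and supplying the stuffed-map power-counting bound $\widehat F_g(t)-\widehat F_g(0)=\cO(t^{2g-1})$ (with the uniform-in-$t$ validity of the expansion, which you rightly flag as the analytic crux, being exactly what the paper's citation of \cite{Borot:2013qs} is meant to provide).
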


\subsection{The Eynard--Orantin recursion} \label{sec:EO} 
The main result of \cite{Borot:2013lpa} is a recursive reconstruction of the coefficients $F_g^{(V,L)}$ (resp. $W_{g,k}^{(V,L)}$) of of the large $N$ expansion of
the partition function (resp. the connected multi-trace correlators), starting from the knowledge of the equilibrium measure and the planar 2-point function alone. 
Consider the branched double cover of the projective line
\bea
x\, :\, \bbP^1 & \longrightarrow & \bbP^1 \nn \\
z  & \longrightarrow & \frac{x_+-x_-}{4} \left(z+\frac{1}{z}\right)+\frac{x_++x_-}{2}\,.
\eea
The map $z \mapsto x(z)$ is ramified at $z=\pm 1$, with branch points $x(\pm 1) = 
x_\pm$. 
For $D$ a sufficiently small (possibly disconnected) open neighbourhood of $\{-1,1\}$, and in terms of the $1/N$ expansion of the cumulants, we define meromorphic symmetric $k$-differentials 
$\omega^{(V,L)}_{g,k} \in \Gamma\l(\mathrm{Sym}^k (D), (\cK^1_{D})^{\boxtimes k}\r)$  as
\[
\omega^{(V,L)}_{g,k} \coloneqq \l( W_{g,k}(x(z_1), \dots, x(z_k)) + \frac{\delta_{g,0} \delta_{k,2}}{(x(z_1)-x(z_2))^2}\r) \rd x(z_1) \dots \rd x(z_k)\,,
\]
as well as the \emph{recursion kernel}
\[
K^{(V,L)}(z_0,z) \coloneqq \frac{1}{2}\l(\frac{\int_{z}^{1/z} \omega^{(V,L)}_{0,2}(z_0,\cdot )}{\omega^{(V,L)}_{0,1}(z)-\omega^{(V,L)}_{0,1}(1/z)}\r)\,. 
\]
\begin{prop}[Eynard--Orantin topological recursion, \cite{Eynard:2007hf,Borot:2013lpa}] Let $(V,L)$ be an ASCP. The symmetric $k$-differentials $\omega_{g,k}^{(V,L)}$ are single-valued and meromorphic on $\mathrm{Sym}^k(D)$, with only degree $2g-2+k$ poles at $z_i=\pm 1$, and they are recursively determined as follows.
\ben
\item The planar resolvent, $(g,k)=(0,1)$, is the Stieltjes transform of the equilibrium measure,
\beq 
\frac{\omega^{(V,L)}_{0,1}}{\rd x} = 
W_{0,1}^{(V,L)}(x) = \int_{x_-}^{x^+} \frac{\rho_{V,L}(y)}{x-y}\rd y\,. 
\label{eq:omega01}
\eeq
\item The planar two-point function, $(g,k)=(0,2)$, is the unique meromorphic 2-differential on the symmetric square $D^{(2)}$ which is regular away from the diagonal, and further satisfies  
\beq \omega^{(V,L)}_{0,2}(1/z_1,z_2) + \omega^{(V,L)}_{0,2}(z_1,z_2)+\frac{2}{\pi \ri} \int_{x_-}^{x_+}\bigg( \rd_{z_1}L(z_1, \zeta)  \frac{\omega^{(V,L)}_{0,2}(z_2,\zeta)}{\rd \zeta}\bigg)\rd\zeta =  \frac{\rd x(z_1)\, \rd x(z_2)}{(x(z_1)-x(z_2))^2}\,,
\label{eq:cutom02}
\eeq
\beq
\omega^{(V,L)}_{0,2}(z_1,z_2) = \frac{\rd z_1 \rd z_2}{(z_1-z_2)^2}+ \cO(1)\,, \qquad
\oint_{\mathfrak{I}_0} \omega^{(V,L)}_{0,2}(z_1,\cdot) =0\,.
\label{eq:omega02}
\eeq
\item For $2g-2+k>0$, we have
\begin{align}
\omega^{(V,L)}_{g,k+1}(z_0, z_1, \dots, z_k)= &
\sum_{z'\in \{\pm  1\}} \Res_{z=z'} K^{(V,L)}(z_0,z)\Bigg[ \omega^{(V,L)}_{g-1,k+2}(z, 1/z, z_1, \dots, z_k) \nn \\ 
& +\sum_{\substack{J \subseteq I, 0 \leq h \leq g\,,\\ (J,h) \neq (\emptyset,0),(I,g)}} \omega^{(V,L)}_{h,|J|+1}(z, z_J) \omega^{(V,L)}_{g-h,k-|J|+1}(1/z, z_{I \setminus J})\Bigg]\,.
\label{eq:omegagk}
\end{align} 
where $I=\{z_1, \dots, z_k\}$.
\item For $2g-2>0$, we have 
\[
\\
F_g^{(V,L)} =  \frac{1}{2-2g} \sum_{z'\in \{\pm  1\}} \Res_{z=z'}  \l(\int^z \omega^{(V,L)}_{0,1}\r)\omega^{(V,L)}_{g,1}(z)\,.
\]
\een
\label{prop:EO}
\end{prop}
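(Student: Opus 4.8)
The plan is to read the recursion off from the exact finite-$N$ Schwinger--Dyson equations of the ensemble \eqref{eq:measureVL}, expanded against the topological-type $1/N$ expansion of \cref{prop:asympgen}. First I would derive the loop equations: feeding into \eqref{eq:measureVL} the infinitesimal change of variables $x_i\mapsto x_i+\epsilon f(x_i)$ with $f$ holomorphic in a neighbourhood of $\widehat{\bbC}\setminus\mathsf A$, and collecting the $\cO(\epsilon)$ term --- the boundary contributions vanishing by the (\textbf{Strong confinement}) hypothesis --- one obtains, on choosing $f(\cdot)=(x-\cdot)^{-1}$ and using the symmetry of $L$, an exact identity of the schematic shape
\[
W_1(x)^2 + W_2(x,x) - \tfrac{N}{t}\,V'(x)\,W_1(x) + (\text{interaction term}) = P_1(x)\,,
\]
in which the interaction term is a convolution of $W_1$ against the derivative of $L$ in its first argument, no separate $\de_x W_1$ term survives because $\beta=2$, and $P_1$ is holomorphic in a neighbourhood of $\mathfrak{I}_0$ --- this holomorphicity being the analytic substitute for the ``polynomiality'' of the loop equation in the polynomial-potential case, and forced here by real-analyticity of $V$ and $L$. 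Analogous identities with holomorphic remainders $P_k$ hold for all the $W_k$; substituting $W_k\simeq\sum_g(N/t)^{2-2g-k}W_{g,k}$ from \cref{prop:asympgen} and matching powers of $N/t$ turns the hierarchy into a recursive system for the $W_{g,k}$.

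The initial data of that system is fixed at the first two orders. At leading order the one-point equation becomes the quadratic planar loop equation for $W_{0,1}$; combined with the square-root edge behaviour of \cref{prop:offcrit} and the connectedness of $\mathfrak{I}_0$ from \cref{prop:minimum}, its unique solution is the Stieltjes transform \eqref{eq:omega01} of $\nu_{(V,L)}$, which pulls back under $z\mapsto x(z)$ to a meromorphic differential on $\bbP^1$ regular away from $\{\pm1\}$, the singularities of $V'$ lying outside $\mathsf A$ and being harmless by analyticity. At the next order the $(0,2)$-equation is linear in $W_{0,2}$ and reproduces \eqref{eq:cutom02}, whereupon \eqref{eq:omega02} pins $\omega_{0,2}$ down as the unique $L$-deformed Bergman kernel --- the unique symmetric $2$-differential with a double pole on the diagonal, no residue, and vanishing $\mathfrak{I}_0$-period.

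For $2g-2+k>0$, extracting the $(N/t)^{2-2g-k}$-coefficient of the loop equation for $W_{g,k+1}(x,z_I)$ gives a \emph{linear} equation
\[
\Big(2W_{0,1}(x)-\tfrac{1}{t} V'(x)+(\text{interaction term})\Big)W_{g,k+1}(x,z_I)=Q_{g,k+1}(x,z_I)+(\text{holomorphic remainder})\,,
\]
with $Q_{g,k+1}$ a universal bilinear expression in the $\omega_{h,\bullet}$ of strictly smaller $2h-2+\bullet$. In the variable $z$ the operator on the left equals $\big(\omega_{0,1}(z)-\omega_{0,1}(1/z)\big)/\rd x(z)$, which vanishes simply at $z=\pm1$; inverting it against the Cauchy kernel $\int_z^{1/z}\omega_{0,2}(z_0,\cdot)$ reconstitutes $K^{(V,L)}$, and the projection onto the differentials with poles only at $z=\pm1$ is effected by $\sum_{z'\in\{\pm1\}}\Res_{z=z'}$, the holomorphic remainder having no residue at the branch points and $K^{(V,L)}$ having the right zero/pole structure there; this is exactly \eqref{eq:omegagk}. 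An induction on $2g-2+k$ then yields single-valuedness, meromorphy on $\mathrm{Sym}^k(D)$ with poles only at $z_i=\pm1$ of order at most $2g-2+k$, and symmetry under permutations of the $z_i$ --- the last not manifest from \eqref{eq:omegagk} and deduced instead from the loop equations. The free-energy formula, finally, is the dilaton equation: differentiating $\log Z^{(V,L)}$ in $t$ and using the $(0,1)$-equation expresses the $(N/t)^{2-2g}$-coefficient of $\de_t\log Z^{(V,L)}$ through $\omega_{g,1}$, and integrating together with the homogeneity of the recursion gives $(2-2g)\,F_g^{(V,L)}=\sum_{z'\in\{\pm1\}}\Res_{z=z'}\big(\int^z\omega_{0,1}^{(V,L)}\big)\,\omega_{g,1}^{(V,L)}$, the residue again localised at the branch points since $\omega_{g,1}^{(V,L)}$ has poles only there.

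The step I expect to be the real obstacle is none of the above algebra, but the analytic justification that (a) the expansion of \cref{prop:asympgen} exists and may be inserted into, and manipulated term by term inside, the loop equations, and (b) the \emph{strong} forms of the linear and quadratic loop equations hold --- that every $W_{g,k}$ is analytic off $\mathfrak{I}_0$ with a square-root edge and no spurious poles, and that the remainders $P_{g,k}$ are genuinely holomorphic near $\mathfrak{I}_0$. Establishing (a) and (b) rests on concentration-of-measure estimates, a priori bounds on the correlators, and a careful analysis of the above linear operator near its simple zeros; this is precisely the content of \cite{Borot:2013pda,Borot:2013qs,Borot:2013lpa}, and it is exactly there that strict convexity and real-analyticity of $(V,L)$ are indispensable. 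Accordingly the proof reduces to verifying that an analytic strictly convex pair in the sense of \cref{def:convex} falls within the hypotheses of those theorems --- which it does by construction --- and transcribing their conclusions into the uniformising variable $z$.
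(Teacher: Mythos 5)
Your proposal is correct and follows essentially the same route as the paper: the paper's proof is precisely the observation that an ASCP satisfies the hypotheses of the cited loop-equation results (\cite{Borot:2013pda,Borot:2013qs,Borot:2013lpa}), whose linear and quadratic loop equations are then solved by \eqref{eq:omega01}--\eqref{eq:omegagk}, with uniqueness of the deformed Bergman kernel from \eqref{eq:cutom02}--\eqref{eq:omega02} also taken from there. Your fuller sketch of the Schwinger--Dyson derivation, the kernel inversion at the branch points, and the dilaton-equation argument for $F_g^{(V,L)}$ is a faithful unpacking of that cited machinery, and you correctly identify that the genuine analytic content (existence of the $1/N$ expansion and the strong form of the loop equations) is exactly what the references supply under strict convexity and real-analyticity.
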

The statement follows from \cref{prop:offcrit} and from the existence, when $(V,L)$ is an ASCP 
\cite[Hyp.~3.3, 3.7--3.8]{Borot:2013lpa}, 
of Schwinger--Dyson equations for the convergent measure \eqref{eq:measureVL}, giving rise to linear and quadratic loop equations solved by \eqref{eq:omega01}--\eqref{eq:omegagk} 
\cite[Cor.~3.4, Prop.~3.13--3.16]{Borot:2013lpa}. Existence and uniqueness of a solution of \eqref{eq:cutom02}-\eqref{eq:omega02} in terms of the planar two-point function was established in \cite[Cor.~3.12]{Borot:2013lpa}.

\subsubsection{Equivariant correlators}
\label{sec:equivcorr}
For later use we shall introduce an equivariant generalisation of the symmetric differentials $\omega_{g,k}^{(V,L)}$. These  carry equivalent information for the reconstruction of the asymptotic expansion \eqref{eq:logZVL} and, as we shall see, are often more amenable to a closed-form analytic representation.\\

Let $\mathrm{Rat}_d(\bbP^1)$ be the semi-group of rational degree-$d$ maps of the complex projective line onto itself. For $\sigma \in \mathrm{Rat}_d(\bbP^1)$, write \[\cE \coloneqq \bbP^1 \setminus \mathrm{Crit}(\sigma)\,, \qquad \cG \coloneqq \sigma (\cE)\,.\]  
 Then $\sigma : \cE \longrightarrow \cG$ is a degree-$d$ normal covering map whenever $\cE \cap \{\pm 1\} = \emptyset$.
We will fix an open cover $\{\cE_\alpha\}_\alpha$ of $\cE$, over which $\sigma$ trivialises with sections $\Sigma_h^{(\alpha)}$, $h=1, \dots, d$:
\beq
\xymatrix@=2cm{ 
\cE_\alpha\, \ar@{^{(}->}[r] \ar[d]^{{\sigma}} &  
\cE\, \ar@{^{(}->}[r] \ar[d]^{{\sigma}} & \bbP^1 
\ar[d]^{{\sigma}}    \\
\sigma(\cE_\alpha)\, \ar@/^15pt/[u]^{\Sigma_h} \ar@{^{(}->}[r] &
\sigma(\cE)\, \ar@{^{(}->}[r] & \bbP^1   
}
\label{eq:sigma}
\,\,.
\eeq
%
Let $\chi \in \mathrm{Hom}(\mathrm{Deck}(\sigma), \bbC^\times)$ be a 1-dimensional complex character of the automorphism group of the cover, and denote
\[ L_\chi \coloneqq \cE \times_\chi \bbC \]
its associated line bundle on $\cG$. For 
$\beta \in \bbC^\times$, 
we can define $\chi^{-1}$-equivariant versions of the correlators $\omega^{(V,L)}_{g,k}$ on $\mathrm{Sym}^k(\cE)$ by discrete integration with weight~$\chi$ over the fibres of the cover:
\[ 
 \sigma^\chi_! \omega^{(V,L)}_{g,k} \coloneqq \sum_{h \in (\mathrm{Deck}(\sigma))^k} \chi(h) \big(\Sigma_h^{(\alpha)}\big)^* \omega^{(V,L)}_{g,k}\,,
\]
\[\cS \omega^{(V,L)}_{g,k} \coloneqq 
\frac{\delta_{g,0}\delta_{k,2}\rd x_1 \rd x_2}{(x_1-x_2)^2}+
\beta^k\, \sigma^* \sigma^\chi_! \l[\omega^{(V,L)}_{g,k} -\frac{\delta_{g,0}\delta_{k,2}\rd x_1 \rd x_2}{(x_1-x_2)^2}\r]
\,.\]
When $\beta=1$ and $\chi$ is the trivial character, this is just the average with respect to the monodromy action on the cover: see \cite{Brini:2011wi,Borot:2015fxa,Brini:2017gfi} for examples relevant to Chern--Simons theory. Note that for the unstable case $(g,k)=(0,2)$, we only average over the part of $\omega_{0,2}^{(V,L)}$ which is holomorphic on the ``physical sheet'' $(\widehat{\bbC}\setminus \mathfrak{I}_0)^2$.
\begin{prop}
For $2g-2+k>0$, the differentials $\cS \omega^{(V,L)}_{g,k}$ satisfy the Eynard--Orantin  recursion:
\begin{align*}
\cS \omega^{(V,L)}_{g,k+1}(z_0, z_1, \dots, z_n)= &
\sum_{z'\in \sigma^{-1}\{\pm  1\}} \Res_{z=z'} \cS K^{(V,L)}(z_0,z)\Bigg[ \cS \omega^{(V,L)}_{g-1,k+2}(z, 1/z, z_1, \dots, z_k) \\ 
& +\sum_{\substack{J \subseteq I, 0 \leq h \leq g\,,\\ (J,h) \neq (\emptyset,0),(I,g)}} \cS \omega^{(V,L)}_{h,|J|+1}(z, z_J)\, \cS \omega^{(V,L)}_{g-h,k-|J|+1}(1/z, z_{I \setminus J})\Bigg]\,, \\
\end{align*} 
where $I=\{z_1, \dots, z_k\}$, and
\[
\cS K^{(V,L)}(z_0,z) \coloneqq \frac{1}{2}\l(\frac{\int_{z}^{1/z} \cS \omega^{(V,L)}_{0,2}(z_0,\cdot )}{\cS \omega^{(V,L)}_{0,1}(z)-\cS \omega^{(V,L)}_{0,1}(1/z)}\r)\,.
\]
\end{prop}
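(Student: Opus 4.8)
The strategy is to show that $\cS$ intertwines the Eynard--Orantin recursion of \cref{prop:EO} on the spectral curve $(\bbP^1,x,\omega_{0,1}^{(V,L)},\omega_{0,2}^{(V,L)})$ with the recursion of a ``$\sigma$-covered'' spectral curve, whose $x$-projection is $x\circ\sigma$ --- hence with simple ramification exactly over $\sigma^{-1}\{\pm 1\}$, the critical locus of $\sigma$ having been excised on $\cE$ --- and whose $(0,1)$- and $(0,2)$-data are $\cS\omega_{0,1}^{(V,L)}$ and $\cS\omega_{0,2}^{(V,L)}$. Two preliminaries set this up. First, on differentials over $\cE$ one has $\sigma^*\sigma^\chi_! = \sum_{\tau\in\mathrm{Deck}(\sigma)}\chi(\tau)\,\tau^*$, so that $\cS$ acts on each \emph{stable} $\omega_{g,k}^{(V,L)}$ as the composition of the character-weighted deck average $\Pi_\chi\coloneqq\sum_{\tau}\chi(\tau)\tau^*$ with multiplication by $\beta^k$. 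Second, by \cref{prop:EO}(i)--(ii) and \cref{prop:offcrit}, $\cS\omega_{0,1}^{(V,L)}$ is by construction the $(0,1)$-datum of the covered curve, and $\cS\omega_{0,2}^{(V,L)}$ is a symmetric bidifferential whose only singularity on the physical sheet is a double pole of biresidue one on the diagonal: subtracting and re-adding the bare bidifferential $\rd x_1\rd x_2/(x_1-x_2)^2$ (now built from $x\circ\sigma$) is exactly what cancels the $\beta^2$ prefactor carried by the $\tau=\mathrm{id}$ summand of $\Pi_\chi$ near the diagonal, so that $\cS\omega_{0,2}^{(V,L)}$ is a legitimate Bergman kernel and the covered curve is a bona fide spectral curve. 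Since the bidifferential piece of $\cS\omega_{0,2}^{(V,L)}$ integrates to zero across the cut $z\mapsto 1/z$ (the two endpoints having equal $x\circ\sigma$-value), this also identifies $\cS K^{(V,L)}$ with the kernel written in the statement.

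The core is then an induction on $2g-2+k$. Applying $\cS$ in the external variables to the base recursion \eqref{eq:omegagk} for $\omega_{g,k+1}^{(V,L)}$ and substituting the inductive hypothesis for the lower correlators, I propagate the residue sum over $\{\pm 1\}$ to a residue sum over $\sigma^{-1}\{\pm 1\}$ using the projection formula for residues along a map which is étale near each branch point, $\sigma^*\circ\Res_{z=z'}=\sum_{z''\in\sigma^{-1}(z')}\Res_{z=z''}\circ\,\sigma^*$. The decisive input is the normality of $\sigma$: $\mathrm{Deck}(\sigma)$ acts transitively on each fibre $\sigma^{-1}(z')$, and since the sheet involution $z\mapsto 1/z$ of $x$ at $z'\in\{\pm 1\}$ lifts through $\sigma$ to the local involution of $x\circ\sigma$ at every point of $\sigma^{-1}(z')$, the deck group permutes these ramification points compatibly with their local involutions. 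Hence $\Pi_\chi$ in the external variables commutes with both ``pass to the other local sheet'' and ``take the residue at a ramification point'', and the rearranged right-hand side is precisely \eqref{eq:omegagk} with $\cS$ inserted throughout, $\cS K^{(V,L)}$ in place of $K^{(V,L)}$, residues over $\sigma^{-1}\{\pm 1\}$, and the lower correlators replaced by the $\cS\omega_{h,\bullet}^{(V,L)}$. Single-valuedness, meromorphy, and the order-$(2g-2+k)$ pole bound at $\sigma^{-1}\{\pm 1\}$ follow by pulling back the corresponding assertions of \cref{prop:EO} along $\sigma$.

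The step I expect to be the main obstacle is the simultaneous bookkeeping of the powers of $\beta$ and of the $(0,2)$ anomaly as the recursion is unfolded. The delicate point is the degenerate summand $\cS\omega_{0,2}^{(V,L)}(z,z_j)\,\cS\omega_{g,k-1}^{(V,L)}(1/z,\cdot)$ on the right-hand side of \eqref{eq:omegagk}: one must verify that it is the deck-averaged ($\beta^2$) part of $\cS\omega_{0,2}^{(V,L)}$ that feeds into the kernel's numerator, while the full $\cS\omega_{0,2}^{(V,L)}$ enters in its role as an explicit summand, and that with these conventions the net powers of $\beta$ balance on the two sides of \eqref{eq:omegagk} --- which is where one invokes the covariance of the closed Eynard--Orantin recursion under the rescaling of $\omega_{0,1}$ that $\cS$ implements, with the Bergman kernel normalised to unit biresidue \cite{Eynard:2007kz,Hock:2025wlm}. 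Carrying this homogeneity through the residues at the (generically several) points of $\sigma^{-1}\{\pm 1\}$, and checking that the character phases $\chi(\tau)$ recombine correctly under the deck action, is the one place where genuine care is needed; the remainder is formal rearrangement of the loop equations already in force by \cref{prop:EO}.
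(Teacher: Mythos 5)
Your overall strategy --- regard $(x\circ\sigma,\ \cS\omega^{(V,L)}_{0,1},\ \cS\omega^{(V,L)}_{0,2})$ as a new spectral curve and push the recursion through the cover by induction --- is reasonable in outline, but the pivotal step is asserted rather than proved, and it is exactly where the content of the proposition sits. Applying the character-weighted deck average in the \emph{external} variables to \eqref{eq:omegagk} and then splitting each residue over the fibre $\sigma^{-1}(z')$ only gives you the residue sum of the \emph{pulled-back original} integrand: the kernel is still $K^{(V,L)}$ (up to pullback) and the correlators are not averaged in the internal pair $(z,1/z)$. The right-hand side you must reach is genuinely different: expanding $\cS\omega^{(V,L)}_{h,|J|+1}(z,z_J)\,\cS\omega^{(V,L)}_{g-h,k-|J|+1}(1/z,z_{I\setminus J})$ produces cross terms $\chi(h_1)\chi(h_2)\,h_1^{*}\omega^{(V,L)}_{h,|J|+1}(z,z_J)\,h_2^{*}\omega^{(V,L)}_{g-h,k-|J|+1}(1/z,z_{I\setminus J})$ with $h_1\neq h_2$, which are simply absent from the externally averaged base recursion, and the kernel $\cS K^{(V,L)}$ has numerator $\int\cS\omega^{(V,L)}_{0,2}$ and denominator $\cS\omega^{(V,L)}_{0,1}(z)-\cS\omega^{(V,L)}_{0,1}(1/z)$ that differ from a character multiple of the deck-translated original kernel by the contributions of the regular deck translates, which cannot be discarded a priori near the translated ramification points. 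Normality of $\sigma$ and the residue projection formula do not turn one expression into the other; showing that the two residue sums agree \emph{is} the statement to be proved. Likewise, the homogeneity of the closed recursion under $\omega_{0,1}\mapsto\lambda\,\omega_{0,1}$ (with $\omega_{0,2}$ untouched) only controls the powers of $\beta$, and not even that cleanly here, since $\cS\omega^{(V,L)}_{0,2}$ rescales only the regular part of the bidifferential; the recombination of the $\chi(h)$ phases in the internal variable --- which you yourself flag as the delicate point --- is left without a mechanism.

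The paper takes a shorter route that avoids manipulating the recursion altogether: the linear and quadratic loop equations of \cite{Borot:2013lpa} are $\bbC$-linear in the spectator arguments and local at the ramification points (at a translated ramification point only one deck translate of each correlator is singular), so the family $\cS\omega^{(V,L)}_{g,k}$ satisfies loop equations at every point of $\sigma^{-1}\{\pm 1\}$ with unstable data $\cS\omega^{(V,L)}_{0,1}$, $\cS\omega^{(V,L)}_{0,2}$; the conclusion then follows from the uniqueness content of \cref{prop:EO} (the Eynard--Orantin formula is the unique solution of the loop equations with prescribed unstable data and normalisation \eqref{eq:omega02}), and the argument extends to any virtual character. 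If you insist on your induction you will in effect have to reprove this uniqueness along the way; it is both cleaner and closer to the paper to verify the loop equations for the averaged correlators directly and then quote uniqueness, rather than to commute $\cS$ through the residue formula term by term.
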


The statement follows from the $\bbC$-linearity of the linear and quadratic loop equations \cite{Borot:2013lpa} in the arguments of the correlators, together with \cref{prop:EO}(iii); in particular it generalises to $\chi \in \mathrm{Rep}(\mathrm{Deck}(\sigma))$ being any virtual character. The variational formulas for the higher genus free energies \cite[Thm~5.1]{Eynard:2007kz} and \cite[Cor.~2.10]{Borot:2013lpa}, combined with \cref{prop:EO}(iv) further imply the following statement.

\begin{prop}
For $(V,L)$ an ASCP and $\sigma, \chi$ as above, we have for $g \geq 2$ that
    \[
F_g^{(V,L)} =  \frac{1}{2-2g} \sum_{z'\in \sigma^{-1}(\{\pm  1\})} \Res_{z=z'}  \l(\int^z \cS \omega^{(V,L)}_{0,1}\r)\cS \omega^{(V,L)}_{g,1}(z) + c^{\sigma, \chi}_g\,,
\]
with $c^{\sigma, \chi}_g \in \bbC$ independent of $(V,L)$.
\label{prop:symmFg}
\end{prop}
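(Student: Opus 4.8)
The plan is to leverage the preceding proposition -- that the equivariant correlators $\cS\omega^{(V,L)}_{g,k}$ obey the Eynard--Orantin recursion -- together with the variational (``form--cycle duality'') principle \cite[Thm~5.1]{Eynard:2007kz}, in the Coulomb-gas guise of \cite[Cor.~2.10]{Borot:2013lpa}, and \cref{prop:EO}(iv). The first observation is that, by the preceding proposition, $\{\cS\omega^{(V,L)}_{g,k}\}$ is exactly the output of the topological recursion run on the branched cover $x\circ\sigma$ with ramification locus $\sigma^{-1}(\{\pm1\})$, recursion kernel $\cS K^{(V,L)}$, and initial data $(\cS\omega^{(V,L)}_{0,1},\cS\omega^{(V,L)}_{0,2})$; in particular the linear loop equation holds, so $\cS\omega^{(V,L)}_{g,1}$ has an even principal part and hence vanishing residue at each point of $\sigma^{-1}(\{\pm1\})$. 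Thus the right-hand side of the asserted identity, with $c^{\sigma,\chi}_g$ removed, is independent of the choice of primitive $\int^{z}\cS\omega^{(V,L)}_{0,1}$ and is, by construction, the genus-$g$ Eynard--Orantin free energy $\widetilde{F}^{(V,L)}_g$ of the $\cS$-spectral curve.

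Next I would reduce the problem to an infinitesimal statement. Both $F_g^{(V,L)}$ and $\widetilde{F}^{(V,L)}_g$ are functions of $(V,L)$ over the set of ASCPs, which (for fixed $\mathsf{A}$) is a convex, hence connected, set: convex combinations preserve $1$-body convexity and strong confinement, the functional $Q$ of \cref{def:convex} is affine in $L$ so $2$-body convexity passes to convex combinations, and real-analyticity is clearly stable. It therefore suffices to show that $F_g^{(V,L)}$ and $\widetilde{F}^{(V,L)}_g$ have the same G\^{a}teaux derivative along every admissible variation $\delta(V,L)$; their difference is then locally, and hence globally, constant, and I would \emph{define} $c^{\sigma,\chi}_g$ to be this constant, which by construction depends only on $\sigma$, $\chi$, $g$ and not on $(V,L)$.

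For the comparison of derivatives I would invoke form--cycle duality. A variation $\delta(V,L)$ moves the equilibrium measure so that $\delta\omega^{(V,L)}_{0,1}(z_0)=\oint_{\zeta\in\gamma}\omega^{(V,L)}_{0,2}(z_0,\zeta)\,\Lambda_{\delta(V,L)}(\zeta)$ for a contour $\gamma$ around $\mathfrak{I}_0$ and an explicit kernel $\Lambda_{\delta(V,L)}$ assembled from $\delta V,\delta L$ and the planar data, whereupon $\delta F_g^{(V,L)}=\oint_{\zeta\in\gamma}\omega^{(V,L)}_{g,1}(\zeta)\,\Lambda_{\delta(V,L)}(\zeta)$ for $g\geq2$ by \cite[Cor.~2.10]{Borot:2013lpa}. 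Because $\sigma$ and $\chi$ are fixed, the operator $\cS=\beta^{\bullet}\sigma^{*}\sigma^{\chi}_{!}$ commutes with $\delta$, so $\delta\cS\omega^{(V,L)}_{0,1}=\cS\,\delta\omega^{(V,L)}_{0,1}$; unwinding the pushforward--pullback over the normal cover should exhibit this as a form--cycle deformation of the $\cS$-curve, with cycle $\sigma^{-1}(\gamma)$ and kernel $\sigma^{*}\Lambda_{\delta(V,L)}$, so that form--cycle duality on the $\cS$-curve gives $\delta\widetilde{F}^{(V,L)}_g=\oint_{\sigma^{-1}(\gamma)}\cS\omega^{(V,L)}_{g,1}\,\sigma^{*}\Lambda_{\delta(V,L)}$. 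Refolding the cover -- using $\oint_{\sigma^{-1}(\gamma)}\sigma^{*}(\,\cdot\,)=\deg(\sigma)\oint_{\gamma}(\,\cdot\,)$, the equivariance $\phi^{*}\cS\omega^{(V,L)}_{g,1}=\chi(\phi)^{-1}\cS\omega^{(V,L)}_{g,1}$ for $\phi\in\mathrm{Deck}(\sigma)$, and the vanishing of the residues of $\cS\omega^{(V,L)}_{g,1}$ noted in the first step -- should then collapse this back to $\oint_{\zeta\in\gamma}\omega^{(V,L)}_{g,1}(\zeta)\,\Lambda_{\delta(V,L)}(\zeta)=\delta F_g^{(V,L)}$, completing the argument.

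The hard part will be precisely this unwinding/refolding: one has to check that $\cS$ intertwines Eynard--Orantin form--cycle duality on the $z$-sphere with that on the covering curve $\cE$, so that the full $(V,L)$-dependence of $\widetilde{F}^{(V,L)}_g$ is transported by the \emph{same} kernel $\Lambda_{\delta(V,L)}$ that governs $F_g^{(V,L)}$, the only residual mismatch being the $(V,L)$-independent constant $c^{\sigma,\chi}_g$. This is a careful accounting of how residues and period contours transform under the normal cover $\sigma$ and the twist $\chi$, in the spirit of the quotient-spectral-curve computations of \cite{Brini:2011wi,Borot:2015fxa,Brini:2017gfi}. A more structural alternative, bypassing the local residue juggling, is to observe that $\cS$ effects a transformation of the spectral data within $\mathrm{GL}_2(\bbQ)\ltimes\mathrm{Symp}$, under which the higher-genus free energies ($g\geq2$) are homothety-covariant and otherwise invariant \cite{Eynard:2007kz,Hock:2025wlm}, leaving an additive ambiguity which one identifies with $c^{\sigma,\chi}_g$.
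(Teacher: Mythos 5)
Your route is essentially the paper's own: the proof given there is precisely the one-line appeal to the variational formulas of \cite[Thm~5.1]{Eynard:2007kz} and \cite[Cor.~2.10]{Borot:2013lpa} together with \cref{prop:EO}(iv), and your comparison of G\^ateaux derivatives via form--cycle duality, plus connectedness (by convexity) of the set of ASCPs to promote the locally constant difference to the global constant $c^{\sigma,\chi}_g$, is a fleshed-out version of exactly that argument. The unwinding/refolding bookkeeping you flag as the hard part is left equally implicit in the paper, so your attempt matches the published justification in both strategy and level of detail.
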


\section{The conifold model}
\label{sec:cm}

We now specialise the discussion of the previous Section to a particular mean-field model. We fix 
$\mathsf{A}=\bbR_{>0}$ to be the open positive half-line, and define $\Phi \in C^\infty(\mathsf{A})$ as the solution of the following Cauchy problem on $\mathsf{A}$:
\[
 x^2 \Phi''(x) + x \Phi'(x) =   \frac{x}{(1-x+x^2)}\,, 
\]
\beq
 \Phi(1)=\Phi'(1)=0\,.
\label{eq:phi}
\eeq 
By construction, we have 
\beq 
\Phi(x)=\Phi(1/x)\,,
\label{eq:phiinv}
\eeq
with $\Phi(x)$ non-negative and strictly convex over its domain and with a unique double zero at
$x=1$. More in detail, from \eqref{eq:phi}, we compute that
\beq
\Phi(x) =
\left\{\begin{array}{ll}
\frac{2 \pi \log(x)}{3 \sqrt{3}}+ \phi_\infty + \cO\l(\frac{1}{x}\r)\,, & x \to +\infty\,, \\
\\
\frac{(x-1)^2}{2}+ \cO(x-1)^3 \,, & x \to 1\,,
\end{array}\right.
\label{eq:phiasym}
\eeq
where
\[\phi_\infty = \frac{4 \pi^2}{27} -\frac{\psi ^{(1)}\left(\frac{1}{6}\right)+\psi ^{(1)}\left(\frac{1}{3}\right)}{18}=-1.17195\ldots \,, \]
and
$\psi^{(n)}(x)=\de_x^{n} \log \Gamma(x)$ the $n^{\rm th}$ polygamma function.
\begin{figure}
\includegraphics[scale=0.45]{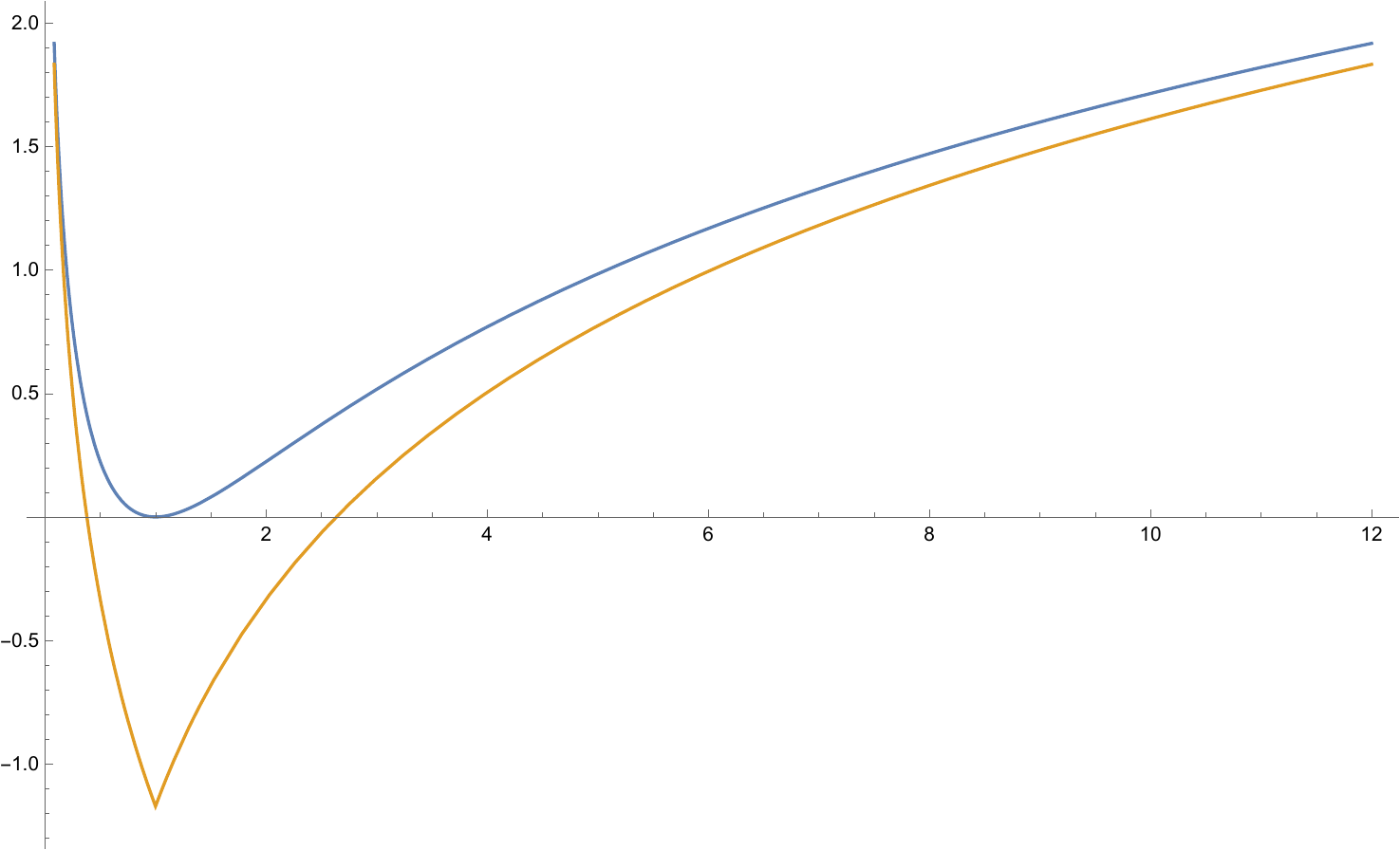}
\caption{The graph of $\Phi(x)$ (in blue) and of $\frac{2 \pi}{3\sqrt{3}} |\log x|+\phi_\infty$, superimposed (in orange).}
\end{figure}
Finally, we define 
$\Lambda \in \mathrm{C}^\infty(\mathsf{A}^2)$ as
\beq
\Lambda(x,y) \coloneqq -\frac{1}{2}\log \l(x^2 +x y+y^2\r)
\,.
\label{eq:lambda}
\eeq 

\subsection{Strict convexity}

\begin{lem}
For all $0<t<2\pi/(3\sqrt{3})$, the pair $(\Phi, \Lambda)$ is an ASCP.
\label{lem:ASCP}
\end{lem}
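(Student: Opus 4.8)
I would verify the four clauses of \cref{def:convex} for $(V,L)=(\Phi,\Lambda)$ on $\mathsf{A}=\bbR_{>0}$. Real-analyticity is immediate: $\Lambda$ is real-analytic on $\mathsf{A}^2$ because $x^2+xy+y^2$ is positive definite there, while $\Phi$ is real-analytic on $\mathsf{A}$ as the solution of the linear equation \eqref{eq:phi}, whose coefficients $x^2,x$ and inhomogeneity $x/(1-x+x^2)$ are real-analytic on all of $\bbR_{>0}$ (the denominator has discriminant $-3<0$, hence no real zero). Rewriting \eqref{eq:phi} in Euler form $(x\de_x)^2\Phi=x/(1-x+x^2)$, i.e.
\[
\de_u^2\widehat\Phi(u)=\frac{1}{2\cosh u-1}\,,\qquad \widehat\Phi(u)\coloneqq\Phi(\re^u)\,,\quad \widehat\Phi(0)=\widehat\Phi'(0)=0\,,
\]
yields at once the explicit representation $\Phi(x)=\int_1^x\tfrac{\rd s}{s}\big(\int_1^s\tfrac{\rd t}{1-t+t^2}\big)$, the reflection symmetry \eqref{eq:phiinv}, the asymptotics \eqref{eq:phiasym}, and the \textbf{1-body convexity} clause: $\de_u^2\widehat\Phi=(2\cosh u-1)^{-1}>0$ on $\bbR$, i.e.\ $\Phi$ is strictly convex (equivalently, $\widehat\Phi$ is a strictly convex potential on $\bbR$), which is the form of convexity underlying the one-cut statement of \cref{prop:offcrit}.

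\textbf{The analytic core: 2-body convexity.} Here the plan is to linearise via $x=\re^u$. Using $x^2+xy+y^2=\re^{u+v}(2\cosh(u-v)+1)$ together with the hyperbolic triplication identity $2\cosh w+1=\sinh(\tfrac{3w}{2})/\sinh(\tfrac{w}{2})$ one gets
\[
\log|x-y|+\Lambda(x,y)=\log 2+K(u-v)\,,\qquad K(w)\coloneqq\tfrac32\log\!\big|\sinh\tfrac{w}{2}\big|-\tfrac12\log\!\big|\sinh\tfrac{3w}{2}\big|\,,
\]
so that, pushing $\nu\in\cM_0(\mathsf{A})$ forward to $\widetilde\nu\coloneqq(\log)_*\nu\in\cM_0(\bbR)$, the additive constant drops and the functional $Q$ of \cref{def:convex} becomes the translation-invariant quadratic form $Q(\nu)=\iint_{\bbR^2}K(u-v)\,\rd\widetilde\nu(u)\,\rd\widetilde\nu(v)$, on which Fourier analysis can be brought to bear. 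Substituting $\log|\sinh as|=\log(1-\re^{-2a|s|})+a|s|-\log 2$ cancels the linear-in-$|s|$ parts of $K$, leaving $K+\log2\in L^1(\bbR)$ (a logarithmic singularity at $0$, exponential decay at $\infty$), with Fourier transform, for $\xi\neq0$,
\[
\widehat K(\xi)=\frac{\pi}{2\xi}\Big(\coth\tfrac{\pi\xi}{3}-3\coth(\pi\xi)\Big)\,,
\]
the $\xi^{-2}$ contributions of the two $\log(1-\re^{-\,\cdot\,})$ pieces cancelling, so $\widehat K$ extends continuously across $0$ with $\widehat K(0)=-\tfrac{4\pi^2}{9}$. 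Finally the triplication formula $\coth 3a=(\coth^3 a+3\coth a)/(3\coth^2 a+1)$ gives $3\coth 3a-\coth a=8\coth a/(3\coth^2 a+1)>0$ for $a>0$, whence $\widehat K(\xi)<0$ for all $\xi$; therefore
\[
Q(\nu)=\frac{1}{2\pi}\int_{\bbR}\widehat K(\xi)\,\big|\widehat{\widetilde\nu}(\xi)\big|^2\,\rd\xi\;\leq\;0\,,
\]
with equality only when $\widehat{\widetilde\nu}\equiv0$, i.e.\ $\nu=0$, which is exactly the \textbf{2-body convexity} clause. (One runs this first for, say, compactly supported $\widetilde\nu$ with $L^2$ density and extends by density.)

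\textbf{Strong confinement.} It is here that the hypothesis $t<2\pi/(3\sqrt3)$ is used. Fix any $\lambda\in(t,\tfrac{2\pi}{3\sqrt3})$ --- possible precisely because $t<\tfrac{2\pi}{3\sqrt3}$ --- and let $H\colon\bbR_{>0}\to\bbR_{>0}$ be any smooth, strictly positive function with $H(x)=\lambda|\log x|$ for $|\log x|\geq1$. Then $\limsup_{x\to0}H(x)/\log x=-\lambda<-t$ and $\liminf_{x\to+\infty}H(x)/\log x=\lambda>t$, so the second and third bullets of \cref{def:convex} hold by construction. For the first, $x^2+xy+y^2\geq\max(x,y)^2$ gives $-2\Lambda(x,y)\geq 2\log\max(x,y)$, while \eqref{eq:phiasym} and \eqref{eq:phiinv} give $\Phi(x)=\tfrac{2\pi}{3\sqrt3}|\log x|+\phi_\infty+o(1)$ as $x\to0^+$ and as $x\to+\infty$; since $\lambda<\tfrac{2\pi}{3\sqrt3}$, the continuous function $\Phi-H$ tends to $+\infty$ at both ends of $\bbR_{>0}$ and is therefore bounded below, so
\[
\Phi(x)+\Phi(y)-2\Lambda(x,y)-H(x)-H(y)\;\geq\;\big(\Phi(x)-H(x)\big)+\big(\Phi(y)-H(y)\big)+2\log\max(x,y)
\]
is $\geq0$ once $\max(x,y)$, hence $\lVert(x,y)\rVert$, is large enough. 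This shows $(\Phi,\Lambda)$ is an ASCP for every $t\in(0,2\pi/(3\sqrt3))$, and in particular $0<Z^{(\Phi,\Lambda)}<\infty$.

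\textbf{Main obstacle.} I expect the crux to be \textbf{2-body convexity}: it requires the non-obvious observation that the exponential change of variables $x=\re^u$ turns $\log|x-y|+\Lambda(x,y)$ (up to a constant invisible to $\cM_0$) into a single translation-invariant kernel, after which the $\bbZ/3$-type structure of $x^2+xy+y^2$ --- encoded in the hyperbolic triplication identities --- makes $\widehat K$ explicitly computable and manifestly negative. By contrast \textbf{Strong confinement}, although it is what pins the exact threshold $2\pi/(3\sqrt3)$ (the bare finiteness of $Z^{(\Phi,\Lambda)}$ in fact holds for all $t>0$), reduces to the direct estimate above once the $|\log x|$-asymptotics of $\Phi$ at $0$ and $\infty$ are in hand.
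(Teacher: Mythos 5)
Your proposal is correct and follows the same skeleton as the paper's proof: check the clauses of \cref{def:convex} one by one, with the exponential substitution $x=\re^u$ turning $\log|x-y|+\Lambda(x,y)$ into a translation-invariant kernel (your $K(w)$ is exactly the paper's $\lambda(Z)$ from \eqref{eq:lambdaz}, up to the additive $\log 2$ which is invisible on $\cM_0$), and non-positivity of the quadratic form then read off from negativity of its Fourier transform via Plancherel. The genuinely different step is how that negativity is obtained: the paper argues qualitatively, invoking the positivity criterion of \cite{Tu06:Positivity} for even kernels with the stated concavity/decay structure, whereas you compute the transform in closed form, $\widehat K(\xi)=\frac{\pi}{2\xi}\big(\coth\frac{\pi\xi}{3}-3\coth(\pi\xi)\big)$, and conclude from the elementary identity $3\coth 3a-\coth a=\frac{8\coth a}{3\coth^2a+1}>0$. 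Your route is more self-contained (no external positivity theorem) and yields the explicit value $\widehat K(0)=-\tfrac{4\pi^2}{9}$, at the price of a short series computation; both treatments pass from $L^2$ densities to general zero-mass measures at the same sketched level of rigour. For strong confinement the paper simply takes $H=\Phi$, using $\Lambda\leq 0$ at large $\lVert(x,y)\rVert$ and $\lim_{x\to\infty}\Phi/\log x=-\lim_{x\to 0}\Phi/\log x=2\pi/(3\sqrt 3)>t$, while you interpolate with $H=\lambda|\log x|$, $t<\lambda<2\pi/(3\sqrt3)$; the two are equivalent, and your explicit bound $-2\Lambda(x,y)\geq 2\log\max(x,y)$ is if anything a little more careful than the paper's blanket sign statement for $\Lambda$. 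One shared caveat: like the paper (whose proof asserts $\Phi''>0$), you verify convexity in the logarithmic variable, $\de_u^2\Phi(\re^u)=(2\cosh u-1)^{-1}>0$; your parenthetical claim that this is \emph{equivalent} to convexity of $\Phi$ in $x$ is not literally true --- by \eqref{eq:phiasym}, $\Phi$ behaves like $\tfrac{2\pi}{3\sqrt3}\log x$ and so is concave in $x$ for large $x$ --- but since this is precisely the reading on which the paper's own verification of the ({\bf 1-body convexity}) clause rests, it is not a gap of your argument relative to the paper's.
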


\begin{proof}
Both $\Phi$ and $\Lambda$ are manifestly real-analytic on their respective domains; and we have already noted that
$\Phi''(x)>0$ on $\mathsf{A}$, satisfying ({\bf 1-body convexity}). Since $\Lambda$ is strictly negative on $\mathsf{A}^2$, and 
$$-\lim_{x\to 0} \frac{\Phi(x)}{\ln x }=\lim_{x\to \infty} \frac{\Phi(x)}{\ln x }=\frac{2 \pi }{3\sqrt{3}}>t\,,$$
we see that the pair $(\Phi, \Lambda)$ satisfies ({\bf Strong confinement}) in the sense of \cref{def:convex} by taking $H(x)=\Phi(x)$\,.
Finally, let 
\beq 
\lambda(Z) \coloneqq \frac{1}{2} 
\log \left(\frac{\left(\re^Z-1\right)^2}{\re^{2 Z}+\re^{Z}+1}\right)\,,
\label{eq:lambdaz}
\eeq
so that, upon  setting $x=\re^Z$ and $y=\re^W$, we have
\[
\lambda(Z-W)= \log|x-y|+\Lambda(x,y)\,.
\]
Let then $\nu \in \cM_0(\mathsf{A})$ be a zero-mass measure on $\mathsf{A}$, and $\mu\in \cM_0(\bbR)$ the corresponding zero-mass measure on $\bbR$ such that $\exp_*\mu=\nu$.
Writing
\[
\widehat f (p) = \frac{1}{\sqrt{2 \pi}} \int_\bbR \re^{\ri p z} f(z) \rd z
\]
for the Fourier transform of $f \in L^2(\bbR)$, we have
\begin{align}
& 
\int_{\mathsf{A}^2} \big[\log|x-y|+\Lambda(x,y)\big] \rd\nu(x)\rd\nu(y) = 
\int_{\bbR^2} \lambda(z-w) \rd\mu(z)\rd\mu(w)
\nn \\
=&
(\mu', \mu' * \lambda )_{L^2(\bbR)} = (\widehat{\mu'}, \widehat{\mu'} \cdot \widehat \lambda )_{L^2(\bbR)} = \int_{\bbR} |\widehat{\mu'}(p)|^2 \widehat \lambda(p) \rd p\,.
\label{eq:2bodyLambda}
\end{align}
Note that $\lambda(z)$ is even, strictly concave, and with finite $L^2$-norm. By \cite{Tu06:Positivity}, its Fourier transform $\widehat{\lambda}(p)$ 
is strictly negative for all $p\in \bbR$. Therefore, from \eqref{eq:2bodyLambda},
the integral
\[
\int_{\mathsf{A}^2} \big[\log|x-y|+\Lambda(x,y)\big] \rd\nu(x)\rd\nu(y)
\]
is non-positive, and only vanishing  when $\mu'=0$ (and thus $\nu'=0$) almost everywhere, thereby proving ({\bf 2-body convexity}).
\end{proof}

\begin{cor}
For $t\in (0,2\pi/(3 \sqrt{3}))$, 
there is a unique minimising measure $\nu_{(\Phi,\Lambda)}$
for the free energy functional $\cF^{(\Phi, \Lambda)}$, with Radon--Ni\-ko\-dym derivative
\[
\rho_{(\Phi,\Lambda)}(x) \coloneqq \frac{\rd \nu_{(\Phi,\Lambda)}}{\rd x} =  \mathbf{1}_{\mathfrak{I}_0}(x) M_{(\Phi,\Lambda)}(x) \sqrt{(x-x_-)(x_+-x)}\,,
\]
and $0<x_-=1/x_+<x_+$. Moreover, we have asymptotic expansions as $N\to\infty$
\begin{align*}
\log Z^{(\Phi,\Lambda)} \sim & \,- \frac{1}{12} \log N +\sum_{g \geq 0} \l(\frac{N}{t}\r)^{2-2g} F_g^{(\Phi,\Lambda)}\,, \nn \\
W_k^{(\Phi,\Lambda)}(x_1, \dots, x_k) \sim  & \sum_{g \geq 0} \l(\frac{N}{t}\r)^{2-2g-k}\, W^{(\Phi,\Lambda)}_{g,k}(x_1, \dots, x_k)\,,
\end{align*}
with
\[
W_{0,1}^{(\Phi,\Lambda)}(x)=t\,\int_{x_-}^{x^+} \frac{\rho_{\Phi,\Lambda}(y)}{x-y}\rd y\,,
\]
and \beq x\rho_{(\Phi,\Lambda)}(x)=\frac{1}{x}\rho_{(\Phi,\Lambda)}\Big(\frac{1}{x}\Big)\,, \qquad x W_{0,1}(x) = t- \frac{1}{x}W_{0,1}\l(\frac{1}{x}\r)\,.
\label{eq:rhoinv}
\eeq
\end{cor}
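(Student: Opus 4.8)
The first half of the statement requires no new work. Since \cref{lem:ASCP} shows that $(\Phi,\Lambda)$ is an ASCP for every $t\in(0,2\pi/(3\sqrt{3}))$, \cref{prop:minimum} gives the existence and uniqueness of the minimiser $\nu_{(\Phi,\Lambda)}$ of $\cF^{(\Phi,\Lambda)}$ with compact connected support $\mathfrak{I}_0=[x_-,x_+]$, \cref{prop:offcrit} gives the claimed off-critical form of $\rho_{(\Phi,\Lambda)}$ with $x_\pm$ analytic in $t^{1/2}$ and $x_+(0)=x_-(0)$, and \cref{prop:asympgen,prop:EO} give the two topological asymptotic expansions together with the identification of $W_{0,1}^{(\Phi,\Lambda)}$ as the Stieltjes transform of $\rho_{(\Phi,\Lambda)}$. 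So only the inversion identities \eqref{eq:rhoinv} and the relation $x_-=1/x_+$ remain, and this is where the special structure of the pair $(\Phi,\Lambda)$ enters.

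The plan is to exploit the $x\leftrightarrow 1/x$ covariance of the data. Let $\iota\colon\mathsf{A}\to\mathsf{A}$, $\iota(x)=1/x$, be the inversion homeomorphism of $\mathsf{A}=\bbR_{>0}$, with induced involution $\iota_*$ of $\cM_+(\mathsf{A})$. From \eqref{eq:phiinv} one has $\Phi\circ\iota=\Phi$, while \eqref{eq:lambda} gives, for $x,y\in\mathsf{A}$, the elementary identities $\Lambda(1/x,1/y)=\Lambda(x,y)+\log x+\log y$ and $\log|1/x-1/y|=\log|x-y|-\log x-\log y$, so that the combination $\log|x-y|+\Lambda(x,y)$ is invariant under $(x,y)\mapsto(1/x,1/y)$, the logarithmic terms cancelling exactly. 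Substituting $x\mapsto 1/x$ and $y\mapsto 1/y$ in the two integrals defining $\cF^{(\Phi,\Lambda)}$ then shows $\cF^{(\Phi,\Lambda)}[\iota_*\nu]=\cF^{(\Phi,\Lambda)}[\nu]$ for every $\nu\in\cM_+(\mathsf{A})$; by the uniqueness in \cref{prop:minimum}, the minimiser is therefore $\iota$-invariant, $\iota_*\nu_{(\Phi,\Lambda)}=\nu_{(\Phi,\Lambda)}$.

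Everything else then follows by bookkeeping. Invariance of the support gives $\iota(\mathfrak{I}_0)=[1/x_+,1/x_-]=[x_-,x_+]$, hence $x_-x_+=1$; combined with the strict inequality $x_-<x_+$ from \cref{prop:offcrit} for $t>0$, this is precisely $0<x_-=1/x_+<x_+$ (and forces $x_\pm(0)=1$, consistent with the behaviour of $\Phi$ near its minimum in \eqref{eq:phiasym}). Equating Radon--Nikodym derivatives on the two sides of $\iota_*\nu_{(\Phi,\Lambda)}=\nu_{(\Phi,\Lambda)}$ gives $\rho_{(\Phi,\Lambda)}(x)=x^{-2}\rho_{(\Phi,\Lambda)}(1/x)$, which is the first relation in \eqref{eq:rhoinv}. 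For the second, start from $W_{0,1}(1/x)=t\int_{x_-}^{x_+}\rho_{(\Phi,\Lambda)}(y)(1/x-y)^{-1}\rd y$, substitute $y\mapsto 1/u$, use $\rho_{(\Phi,\Lambda)}(1/u)=u^2\rho_{(\Phi,\Lambda)}(u)$ and $x_-x_+=1$ to return the integration range to $[x_-,x_+]$, and split $u/(u-x)=1+x/(u-x)$; the two resulting pieces are $tx\int_{x_-}^{x_+}\rho_{(\Phi,\Lambda)}(u)\rd u=tx$ and $tx^2\int_{x_-}^{x_+}\rho_{(\Phi,\Lambda)}(u)(u-x)^{-1}\rd u=-x^2W_{0,1}(x)$, whence $W_{0,1}(1/x)=tx-x^2W_{0,1}(x)$, i.e.\ $\frac{1}{x}W_{0,1}(1/x)=t-xW_{0,1}(x)$. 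I expect no serious obstacle: the substantive analysis — equilibrium measure, off-criticality, the $1/N$ expansion and the loop equations — is entirely inherited from \cref{sec:mf} through \cref{lem:ASCP}, and the only point needing care is the exact cancellation above that upgrades the mere $\iota$-covariance of $(\Phi,\Lambda)$ to genuine $\iota$-invariance of the functional $\cF^{(\Phi,\Lambda)}$, which is exactly what the model was built to achieve.
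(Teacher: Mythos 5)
Your proposal is correct and follows essentially the same route as the paper: the generic claims are inherited from \cref{lem:ASCP} together with \cref{prop:minimum,prop:offcrit,prop:asympgen,prop:EO}, and the symmetry statements come from the invariance of $\cF^{(\Phi,\Lambda)}$ under $\rd\nu(x)\to\rd\nu(1/x)$ (which the paper deduces from \eqref{eq:phiinv} and the evenness of $\lambda$ in \eqref{eq:lambdaz}, and you verify directly via the cancellation $\log|1/x-1/y|+\Lambda(1/x,1/y)=\log|x-y|+\Lambda(x,y)$) combined with uniqueness of the minimiser. Your explicit change-of-variables computation deriving $xW_{0,1}(x)=t-\tfrac{1}{x}W_{0,1}(1/x)$ from the density symmetry simply makes precise a step the paper leaves implicit.
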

\begin{proof}
Uniqueness, connectedness of the support, and square-root behaviour near endpoints of the equilibrium measure follows from \cref{prop:offcrit}. Existence of a large rank asymptotic expansion for the free energy and correlators is ensured by \cref{prop:asympgen}, as is the expression for the planar resolvent in terms of the equilibrium measure. The fact that $x_-=1/x_+$ and the symmetry \eqref{eq:rhoinv} under the involution $x \to 1/x$ are a consequence, from \eqref{eq:phiinv} and $\eqref{eq:lambdaz}$, of the invariance of the free energy functional in \cref{prop:minimum} under 
\[
\rd \nu(x) \longrightarrow \rd \nu(1/x)
\]
 which implies that its set of minima is invariant under the same involution. As the minimising measure $\nu_{(\Phi,\Lambda)}$ is unique, it is fixed by the involution, hence the symmetry \eqref{eq:rhoinv} of its density under $x \to 1/x$.

\end{proof}

\subsection{$\mu_3$-equivariant correlators}

In the following, we let $\varphi \coloneqq \re^{2\pi \ri/3} \in \mu_3$.\\ 

With notation as in \cref{sec:equivcorr}, we take $\sigma \in \mathrm{Rat}_3(\bbP^1)$ defined by $\sigma(x)=x^3$, 
and set
\[
\beta \coloneqq -\ri/\sqrt{3}\,, \qquad \chi \coloneqq \chi_1-\chi_2\,,
\]
where $\chi_m$ is the irreducible character 
\[
\chi_m(k) = \varphi^{km}\,, \qquad m \in  \bbZ/3\bbZ\simeq \mathrm{Deck}(\sigma)\,.
\]

The equivariant correlators associated to the virtual character $\chi$ are then
%
%
%
\[
\cS \omega_{g,k}^{(\Phi,\Lambda)}\coloneqq \beta^k \sigma^* \sigma^\chi_! \omega^{(\Phi,\Lambda)}_{g,k} = 
\RR_{g,k}(x_1, \dots, x_k)
\frac{\rd x_1}{x_1}\dots \frac{\rd x_k}{x_k}\,,
\]
with
\[
\RR_{g,k}(x_1, \dots, x_k)
\coloneqq
\sum_{n_1, \dots, n_k \in \{1,2\}}
\l(\prod_{j=1}^k (-1)^{n_j+1}\varphi^{n_j} x_j \r)
W^{(\Phi,\Lambda)}_{g,k}(\varphi^{n_1} x_1, \dots, \varphi^{n_k} x_k)\,.
\]
For example, we have
\bea
\RR_{0,1}(x) &=& \,
\varphi x\, W^{(\Phi,\Lambda)}_{0,1}(\varphi x) - \varphi^2 x W^{(\Phi,\Lambda)}_{0,1}(\varphi^2 x)\,,
\label{eq:R01def}
\\
\RR_{0,2}(x_1, x_2) &=& x_1 x_2 \bigg[ \,
\varphi^2  \, W^{(\Phi,\Lambda)}_{0,2}(\varphi x_1, \varphi x_2) -   \, W^{(\Phi,\Lambda)}_{0,2}(\varphi x_1, \varphi^2 x_2) \nn \\ &-&   \, W^{(\Phi,\Lambda)}_{0,2}(\varphi^2 x_1, \varphi x_2)
+\varphi  \, W^{(\Phi,\Lambda)}_{0,2}(\varphi^2 x_1, \varphi^2 x_2)\bigg]
\,.
\label{eq:R02def}
\eea

\subsection{Riemann--Hilbert problem for the planar resolvent}
\label{sec:RHR01}
Writing $$\mathfrak{I}_\pm \coloneqq [\varphi^{\pm 1} x_-, \varphi^{\pm 1} x_+]\,,$$
 we see by \cref{prop:offcrit,prop:asympgen} that $\RR_{g,k}$ is holomorphic and single-valued on $(\widehat{\bbC} \setminus \{\mathfrak{I}_+ \cup \mathfrak{I}_-\})^k$, and has square-root branch cuts when $x_i \in \mathfrak{I}_\pm$, $i=1, \dots, k$ (see \cref{fig:cuts}). The averages and discontinuities along the cuts are described by the following \nameCref{prop:RHPR01}.
\begin{prop}
\label{prop:RHPR01}
For all $x \in \mathfrak{I}_0$, we have
\begin{align}
\RR_{0,1}(\varphi x \re^{\pm \ri 0}) - 
\RR_{0,1}(\varphi^{2}x \re^{\mp \ri 0}) & = -x \Phi'(x)\,, \label{eq:RHR01} \\
\RR_{0,1}(\varphi^{\pm 1} x \re^{\pm \ri 0}) - 
\RR_{0,1}(\varphi^{\pm 1} x \re^{\mp \ri 0}) & = -2 \pi \ri x t \rho_{(\Phi, \Lambda)}(x)\,. \nn
\end{align}
\end{prop}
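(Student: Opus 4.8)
The plan is to reduce both identities to a single Cauchy-transform computation for the planar resolvent $W_{0,1}^{(\Phi,\Lambda)}$ of the conifold model, combined with the Euler--Lagrange characterisation of the equilibrium measure from \cref{prop:minimum}. First, from \eqref{eq:R01def} and $\varphi^3=1$ one rewrites the $\mu_3$-equivariant resolvent through the ordinary resolvent on its three branches,
\[
\RR_{0,1}(\varphi u)=\varphi^2 u\,W_{0,1}^{(\Phi,\Lambda)}(\varphi^2 u)-u\,W_{0,1}^{(\Phi,\Lambda)}(u),\qquad \RR_{0,1}(\varphi^2 v)=v\,W_{0,1}^{(\Phi,\Lambda)}(v)-\varphi v\,W_{0,1}^{(\Phi,\Lambda)}(\varphi v),
\]
as meromorphic identities in $u,v$. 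Since $W_{0,1}^{(\Phi,\Lambda)}$ has its only cut along $\mathfrak{I}_0\subset\bbR_{>0}$, when $u,v$ approach a point $x\in(x_-,x_+)$ only the ``diagonal'' kernels $W_{0,1}^{(\Phi,\Lambda)}(u),W_{0,1}^{(\Phi,\Lambda)}(v)$ jump, while $W_{0,1}^{(\Phi,\Lambda)}(\varphi^{\pm1}u),W_{0,1}^{(\Phi,\Lambda)}(\varphi^{\pm1}v)$ stay holomorphic; this is the only place where the cut structure of the three sheets needs to be tracked with care.

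For the analytic input I use that $W_{0,1}^{(\Phi,\Lambda)}(x)=t\int_{x_-}^{x_+}\rho_{(\Phi,\Lambda)}(y)(x-y)^{-1}\rd y$ (as recorded in \cref{sec:cm}) and that $\rho_{(\Phi,\Lambda)}$ is real-analytic, hence H\"older, on $(x_-,x_+)$ by \cref{prop:offcrit}. Sokhotski--Plemelj then gives the jump $W_{0,1}^{(\Phi,\Lambda)}(x+\ri0)-W_{0,1}^{(\Phi,\Lambda)}(x-\ri0)=-2\pi\ri\,t\,\rho_{(\Phi,\Lambda)}(x)$ and the average $W_{0,1}^{(\Phi,\Lambda)}(x+\ri0)+W_{0,1}^{(\Phi,\Lambda)}(x-\ri0)=2t\,\mathrm{p.v.}\!\int_{x_-}^{x_+}\rho_{(\Phi,\Lambda)}(y)(x-y)^{-1}\rd y$. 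Feeding the jump into the first identity above — with the orientation of $\mathfrak{I}_\pm$ transported from that of $\mathfrak{I}_0$ under multiplication by $\varphi^{\pm1}$ — yields the second relation of \cref{prop:RHPR01} at once.

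For the first relation, subtracting the two identities above evaluated at $u=x\re^{+\ri0}$ and $v=x\re^{-\ri0}$ (and symmetrically for the opposite signs) gives $\RR_{0,1}(\varphi x\re^{+\ri0})-\RR_{0,1}(\varphi^2 x\re^{-\ri0})=\varphi x\,W_{0,1}^{(\Phi,\Lambda)}(\varphi x)+\varphi^2 x\,W_{0,1}^{(\Phi,\Lambda)}(\varphi^2 x)-x\big[W_{0,1}^{(\Phi,\Lambda)}(x+\ri0)+W_{0,1}^{(\Phi,\Lambda)}(x-\ri0)\big]$. The boundary-value sum collapses to $2tx\,\mathrm{p.v.}\!\int_{x_-}^{x_+}\rho_{(\Phi,\Lambda)}(y)(x-y)^{-1}\rd y$; for the holomorphic part, the factorisation $x^2+xy+y^2=(x-\varphi y)(x-\varphi^2 y)$ turns $\varphi x\,W_{0,1}^{(\Phi,\Lambda)}(\varphi x)+\varphi^2 x\,W_{0,1}^{(\Phi,\Lambda)}(\varphi^2 x)$ into $tx\int_{x_-}^{x_+}\rho_{(\Phi,\Lambda)}(y)\,\partial_x\log(x^2+xy+y^2)\,\rd y=-2tx\int_{x_-}^{x_+}\rho_{(\Phi,\Lambda)}(y)\,\partial_x\Lambda(x,y)\,\rd y$ by \eqref{eq:lambda}. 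Thus the right-hand side equals $-2tx\int_{x_-}^{x_+}\rho_{(\Phi,\Lambda)}(y)\big[(x-y)^{-1}+\partial_x\Lambda(x,y)\big]\rd y$, which is exactly $-x\Phi'(x)$ once one differentiates the equality case of \eqref{eq:robin} along the open interval $(x_-,x_+)$ — differentiation under the integral being legitimate by the H\"older regularity above, and the derivative vanishing because the left side of \eqref{eq:robin} is constant there. This is \eqref{eq:RHR01}; the opposite sign choice is identical since $W_{0,1}^{(\Phi,\Lambda)}(x+\ri0)+W_{0,1}^{(\Phi,\Lambda)}(x-\ri0)$ is symmetric under $\pm\ri0$.

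The main obstacle is not analytic: Sokhotski--Plemelj together with the differentiated equilibrium condition do all the work, and their hypotheses are supplied by \cref{prop:offcrit,prop:minimum}. The delicate part is the bookkeeping — keeping straight, on each of the three sheets, which Cauchy kernel carries the branch cut across each of $\mathfrak{I}_0,\mathfrak{I}_+,\mathfrak{I}_-$, and fixing the orientation conventions so that the $\pm\ri0$ boundary values on the rotated cuts $\mathfrak{I}_\pm$ are precisely the images under $x\mapsto\varphi^{\pm1}x$ of those on $\mathfrak{I}_0$. I would single out that orientation bookkeeping as the step most likely to introduce sign errors.
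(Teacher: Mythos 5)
Your proposal is correct and follows essentially the same route as the paper's proof: differentiate the equality case of \eqref{eq:robin} along the support of the equilibrium measure, use Sokhotski--Plemelj for the jump and average of $W_{0,1}^{(\Phi,\Lambda)}$ on $\mathfrak{I}_0$, and convert $\partial_x\Lambda$ into the Cauchy kernels at $\varphi^{\pm 1}x$ via the factorisation $x^2+xy+y^2=(x-\varphi y)(x-\varphi^2 y)$, with only the ``diagonal'' term of $\RR_{0,1}$ jumping across the rotated cuts. The orientation bookkeeping you single out is indeed the only delicate point, and the paper resolves it with exactly the convention you adopt (boundary values on $\mathfrak{I}_\pm$ taken as the $\varphi^{\pm 1}$-images of those on $\mathfrak{I}_0$).
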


\begin{proof}
    Taking the distributional logarithmic $x$-derivative of \eqref{eq:robin} for $(V,L)=(\Phi, \Lambda)$, we get
\begin{align}
    x \Phi'(x) = & \dashint_{x_-}^{x_+} \frac{2 x t \rho_{(\Phi,\Lambda)}(y)}{x-y}\rd y +  \int_{x_-}^{x_+} 2 x t\de_x \Lambda(x,y)\rho_{(\Phi,\Lambda)}(y)\rd y\,,
\label{eq:saddlept}
\end{align}
where the dashed integral sign denotes the Cauchy principal value. 
By the Sokhotski--Plemelj lemma, we have, for $x\in \mathfrak{I}_0$, that
\begin{align}
W^{(\Phi,\Lambda)}_{0,1}(x \re^{+\ri 0})+ W^{(\Phi,\Lambda)}_{0,1}(x \re^{- \ri 0}) = & 2 t\, \dashint_{x_-}^{x_+} \frac{\rho_{(\Phi,\Lambda)}(y)}{x-y}\rd y \,, \nn \\ 
W^{(\Phi,\Lambda)}_{0,1}(x \re^{+\ri 0})- W^{(\Phi,\Lambda)}_{0,1}(x \re^{-\ri 0}) = & -2 \pi \ri t \rho_{(\Phi,\Lambda)}(x) \,, 
\label{eq:sp}
\end{align}
expressing, respectively, the average and the discontinuity of $W^{(\Phi,\Lambda)}_{0,1}$ on the cut $\mathfrak{I}_0$ in terms of the equilibrium density. From the discontinuity equation, we find that
\begin{align*}
\RR_{0,1}(\varphi^{\pm 1} x \re^{\pm \ri 0}) - 
\RR_{0,1}(\varphi^{\pm 1} x \re^{\mp \ri 0}) = 
 -2 \pi \ri x t \rho_{(\Phi,\Lambda)}(x)
\end{align*}
proving the second equality in \eqref{eq:RHR01}. For the first equality in \eqref{eq:RHR01}, note that
%
%
\begin{align*}
    2 t \int_{x_-}^{x_+} x\de_x \Lambda(x,y)\rho_{(\Phi,\Lambda)}(y)\rd y
= &    
 -  t \int_{x_-}^{x_+} \frac{2x^2+x y}{x^2+xy+y^2}\rho_{(\Phi,\Lambda)}(y)\rd y \\
 =& -t \int_{x_-}^{x_+} \Bigg[
\frac{x \varphi}{x \varphi -y}+\frac{x \varphi^2}{x\varphi ^2-y}
 \Bigg] \rho_{(\Phi,\Lambda)}(y)\rd y\,, \\
=& - x \varphi W^{(\Phi,\Lambda)}_{0,1}(\varphi x)- \varphi^2{x} W^{(\Phi,\Lambda)}_{0,1}\l({\varphi^2}{x}\r)\,.
\end{align*}
Combining with \eqref{eq:sp}, we get
\begin{align*}
\RR_{0,1}(\varphi^{2}x \re^{\pm \ri 0} )-
\RR_{0,1}(\varphi x \re^{\mp \ri 0}) = &\,
x W^{(\Phi,\Lambda)}_{0,1}(x \re^{+\ri 0})+ x W^{(\Phi,\Lambda)}_{0,1}(x \re^{- \ri 0}) \nn \\ - &\, x \varphi W^{(\Phi,\Lambda)}_{0,1}(\varphi x)- \varphi^2{x} W^{(\Phi,\Lambda)}_{0,1}\l({\varphi^2}{x}\r)
= 
x \Phi'(x)\,,
\end{align*}
for all $x \in \mathfrak{I}_0$, concluding the proof.
\end{proof}
Consider now the logarithmic $x$-derivative of \eqref{eq:RHR01},
\begin{align}
\varphi x\de_x \RR_{0,1}(\varphi x \re^{\pm \ri 0}) - 
\varphi^2 x\de_x \RR_{0,1}(\varphi^{2}x \re^{\mp \ri 0})= 
\frac{1}{\sqrt{3} \ri}\left(\frac{x\varphi}{1+{x}{\varphi }}-\frac{x \varphi^2}{1+x \varphi^2
   }\right)
\label{eq:RHR01der}
\end{align}
for all $x \in \mathfrak{I}_0$. We can homogenise \eqref{eq:RHR01der} in terms of the auxiliary function
\beq
\cW(x) \coloneqq x \de_x \RR_{0,1}(x)-\frac{1}{\sqrt{3} \ri}\l(\frac{x}{1+x}\r)\,,
\label{eq:Wdef}
\eeq
as
\beq
\cW(\varphi^{2}x \re^{\pm \ri 0} )-
\cW(\varphi x \re^{\mp \ri 0}) = 0\,, \qquad x \in \mathfrak{I}_0\,.  
\label{eq:Wspeq}
\eeq
Eq. \eqref{eq:Wspeq} 
is a non-local (homogeneous) Riemann--Hilbert problem for $\cW(x)$, relating its values on opposite sides of the cuts $\mathfrak{I}_\pm$. The 
complete boundary value problem for $\cW(x)$ and $\RR_{0,1}(x)$ can be spelled out as follows.
\begin{description}
\item[BVP(i):]
$\cW(x)$ is a single-valued meromorphic function on $\widehat{\bbC}\setminus \mathfrak{I}_\pm$.
\item[BVP(ii):]
$\cW(x)$ satisfies the double periodicity relation \eqref{eq:Wspeq}, identifying its values on opposite sides of $\mathfrak{I}_\pm$.
\item[BVP(iii):] $\cW(x) \rd \log x$ has a simple pole at $x=-1$ with residue $-1/(\sqrt{3} \ri)$; this follows from \eqref{eq:Wdef} and the fact that $\RR_{0,1}(x)$ is holomorphic across $\widehat{\bbC} \setminus \mathfrak{I}_\pm$ (\cref{prop:asympgen}).
\item[BVP(iv):] $\cW(x)$ has an inverse-square root divergence at $x=\varphi x_\pm$ and $x=\varphi^2 x_\pm$, by \cref{prop:offcrit}.
\item[BVP(v):] $\cW(x)$ and $\RR_{0,1}(x)$ have a simple zero at $x=0$, by \eqref{eq:R01def}.
\item[BVP(vi):] $\cW(x)$ is regular at $x=\infty$, with $\cW(x) = 1/(\sqrt{3} \ri) + \cO(1/x)$. This follows from $W^{(\Phi,\Lambda)}_{0,1}(x)=\cO(1/x)$ by definition of the planar resolvent, $\RR_{0,1}(x) = \cO(1)$ by \eqref{eq:R01def}, and \eqref{eq:Wdef}.
\item[BVP(vii):] if 
$\cC$ is a simple clockwise-oriented small loop around $\mathfrak{I}_-$, we have  
\beq
\frac{\ri}{2\pi}
\oint_{\cC} \RR_{0,1}(x) \frac{\rd x}{x} =  t\,.
\label{eq:AMMper}
\eeq
by \eqref{eq:sp}.
\end{description}
Consider the topological cylinder obtained by cutting open the cuts $\mf{I}_\pm$ in the plane. Conditions (i)-(ii) together 
imply that $\cW$ can be pulled-back to a single-valued continuous function on the genus one surface $T$ obtained by closing up the cylinder $\bbR \times S^1$ through a continuous identification of the opposite boundary circles (see \cref{fig:glueing}). In terms of the universal cover, we have a diagram
\beq
\xymatrix@R=2.5cm@C=2.5cm{
& \mathbb{R}^2 \ar[d]^x \ar@/^.75pc/[dl]^{\pi} \\
T \ar@/^.75pc/[ur]^{u} \ar[r] & \mathbb{R} \times S^1 
}
\label{eq:diagT}
\eeq
with 
\bit
\item $T \simeq \bbR^2/M$, with $M=\bra 2\omega_1 , 2\omega_2 \ket_\bbZ$ a primitive integral lattice;
\item $\pi : \bbR^2 \to T$  the quotient map to $T$, with $u$ a section thereof;
\item $x: \bbR^2 \to \bbR \times S^1$ the covering map to $\bbR \times S^1$, with the images of the edges of the fundamental domain of $M$ mapping to the base circle and vertical segment of the cylinder.
\eit
If, furthermore, the vertical arrow in the diagram can be constructed from a meromorphic surjection $$x: \bbC \longrightarrow \widehat{\bbC}\,,$$ then, under the natural complex structure on $T$ induced by $M$, the pull-back of $\cW$ via $x$ and $u$ will be single-valued and meromorphic on the elliptic curve $T \simeq \bbC/M$, and thus expressible in terms of elliptic functions.


\begin{figure}
    \centering

\begin{tikzpicture}[scale=1.2, thick]

\draw[] (0,0) circle (1.5);

\shade[ball color=gray!30] (0,0) circle (1.5);

\foreach \y in {0.5,-0.5} {
  \draw[fill=gray!1, dashed, opacity=0.95] (-0.3,\y) ellipse (0.3 and 0.15);
}
\node at (-0.9,0.5) {$\mathfrak{I}_+$};
\node at (-0.9,-0.5) {$\mathfrak{I}_-$};

\draw[-{Latex[length=3mm]}, thick] (-0.5,-0.7) arc (-45:-315:1);

\shade[ball color=gray!30] (5,0) circle (1.5);
\draw[](5,0) circle (1.5); 

\begin{scope}
  \shade[gray!30, opacity=.8, shading angle=135] 
    (4.8,0.5) to[out=120,in=90] (2.6,0.4) 
             to[out=-90,in=180] (4.7,-0.55) --
    (4.5,-0.25) to[out=180,in=-90] (2.9,0.4) 
              to[out=90,in=120] (4.3,0.4) -- cycle;
\end{scope}

\draw[thin] (4.8,0.5) to[out=120,in=90] (2.6,0.4) 
             to[out=-90,in=180] (4.6,-0.55);
\draw[thin] (4.3,0.4) to[out=120,in=90] (2.9,0.4) 
             to[out=-90,in=180] (4.5,-0.25);

  \draw[fill=gray!15, opacity=.5, dashed] (4.5,0.5) ellipse (0.3 and 0.15);

    \draw[fill=gray!25, opacity=.3, dashed] (4.5,-0.4) ellipse (0.3 and 0.15);

\end{tikzpicture}

    \caption{The cylinder given by the Riemann sphere with the sides of the intervals $\mathfrak{I_{\pm}}$ cut open (left), and the topological genus one surface obtained upon identifying their opposite sides (right).}
    \label{fig:glueing}
\end{figure}
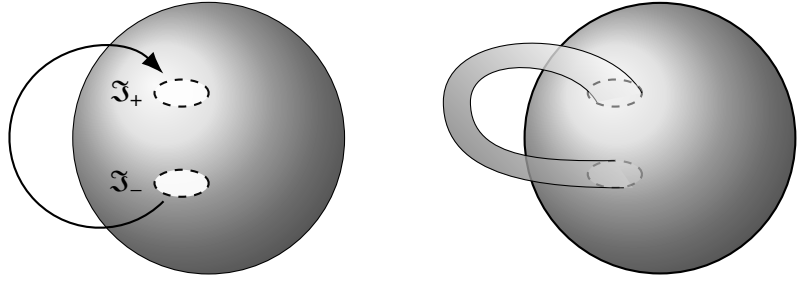

\subsubsection{Elliptic parametrisation} A general strategy to construct such a holomorphic cover was found in \cite{Kostov:1999qx} (see also \cite{Kazakov:1998ji, Dijkgraaf:2002dh} and especially \cite{Zakany:2018dio} for the case at hand). Let $\Delta$ be the fundamental domain centred around the origin for the translation action of $M$ on $\bbC$ (\cref{fig:cuts}), where without loss of generality we set the half-periods to be equal to
\[
    \omega_1=\frac{1}{2}\,, \qquad \omega_2= \frac{\tau}{2} \in \bbH\,, \qquad \omega_3 \coloneqq -\omega_1-\omega_2 = - \frac{1+\tau}{2}\,,
\]
and we further write
\[
\mathsf{q} \coloneqq \re^{2\ri \pi \tau}\,.
\]
We shall henceforth restrict to $\tau \in \ri \bbR^+$ (i.e. $0<\mathsf{q}<1$), so that $M$ is a rectangular lattice.  In the following, we will fix the section $u$ in \eqref{eq:diagT} as the injection of $T$ into the fundamental region $\Delta$, where  as above
\[
\Delta \coloneqq \l\{u \in \bbC\,\Big|\, -\frac{1}{2}\leq \mathrm{Re}(u) < \frac{1}{2}\,,\,\, -\frac{|\tau|}{2}< \mathrm{Im}(u) \leq \frac{|\tau|}{2}\r\}\,,
\]
and we will write $\mathsf{p}_q$ for the point in $T$ with coordinate $q \in \Delta$,
\[
u(\mathsf{p}_q)=q\,.
\]
We also define, for $0<\epsilon \ll 1$,
\[
\Delta^{[\epsilon]}_\pm \coloneqq \l\{u \in \Delta\,, \Big|\, \mathrm{Re}(\pm u)>0\,,~~ |\mathrm{Im}(u)|< \frac{|\tau|}{2}-\epsilon \r\}
\,,
\]
and write $\cA$ for the horizontal segment $\mathrm{Im}(u)=-\frac{\tau}{2}+\ri\epsilon$, oriented towards increasing values of $\mathrm{Re}(u)$, and $\cB$ for the vertical edge $\mathrm{Re}(u)=\frac{1}{2}$, oriented towards increasing values of $\mathrm{Im}(u)$ (see \cref{fig:cuts}). These descend to, respectively, a longitudinal and meridian circle on $T$, and we will denote their homology classes generating $H_1(T, \bbZ)$ as
\[
\alpha \coloneqq [\cA]\,, \qquad \beta \coloneqq [\cB]\,.
\]
Consider now the meromorphic map
\begin{align}
    x : \bbC   
\, & \longrightarrow  \widehat{\bbC}\,, \nn \\
u\, & \longrightarrow  x(u)=-\frac{\vartheta_1(u+1/6)}{\vartheta_1(u-1/6)}\,,
\label{eq:xu}
\end{align} 
where
\beq
\vartheta_1(u) \coloneqq 2\mathsf{q}^{1/8} \sum_{k=0}^{\infty}(-1)^k \mathsf{q}^{k (k+1)/2} \sin (\pi  (2 k+1) u)
\label{eq:thetadef}
\eeq
is the first Jacobi Theta function. 
From the well-known quasi-periodicity properties of $\vartheta_1(u)$ 
under translation by $M$,
\beq
    \vartheta_1(u+1)=  -\vartheta_1(u)=\vartheta_1(-u)\,, 
    \qquad
\vartheta_1(u+\tau)=  -\frac{\re^{-2\pi \ri u}}{\mathsf{q}^{1/2}}\vartheta_1(u)\,,
\label{eq:thetaper}
\eeq
we see that $x(u)$ is a meromorphic function on $\bbC$ with only simple poles at $M+\frac{1}{6}$, and further satisfying
\begin{align}
& x(u)=x(u+1)\,, \qquad x(u+\tau)=\varphi^2 x(u)\,, \qquad x(u) = \frac{1}{x(-u)}\,.
\label{eq:xuprop}
\end{align}
In particular $x(u)$ is periodic under $u \to u+1$ and quasi-periodic under $u \to u+\tau$, with $\log x(u)$ odd in $u$.  From \eqref{eq:xu}, some special values are:
\[
x(-1/6)=0\,, \quad x(0)=-1\,, \quad x(1/6) = \infty\,, \quad x(\pm 1/2) = 1\,.
\]
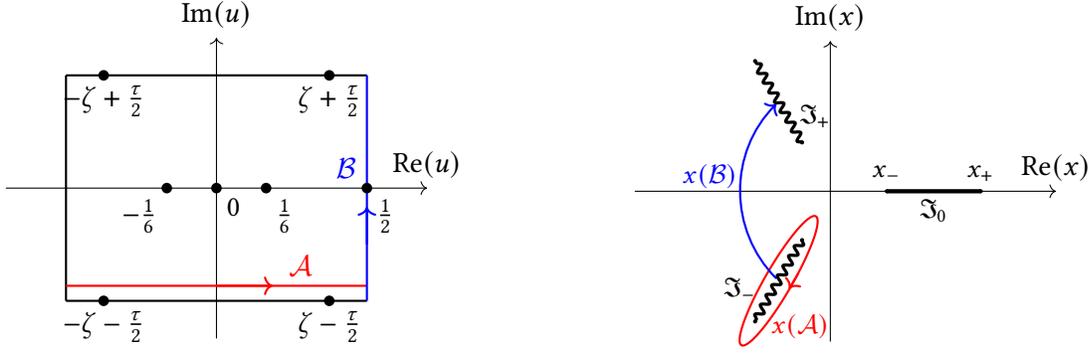
\begin{figure}

\begin{minipage}{.48\textwidth}

\begin{tikzpicture}[scale=1]
  \draw[->] (-2.8, 0) -- (2.8, 0) node[above] {$\mathrm{Re}(u)$};
  \draw[->] (0, -2) -- (0, 2) node[above] {$\mathrm{Im}(u)$};

  \draw[thick] (2, 1.5) -- (-2, 1.5);
  \draw[thick] (-2, 1.5) -- (-2, -1.5);
  \draw[thick] (-2, -1.5) -- (2, -1.5);
  
  \draw[thick, red] (-2, -1.3) -- (2, -1.3) node[above, xshift=-25pt] {$\cA$};
\draw[->, red, thick] (0, -1.3) -- (.75, -1.3);

  \draw[thick, blue] (2, -1.5) -- (2, 1.5) node[left, yshift=-35pt] {$\cB$};

\draw[->, blue, thick] (2, -.75) -- (2, -.25);

  \fill (0, 0) circle (2pt) node[below right] {$0$};

  \fill (2,0) circle (2pt) node[below right] {$\frac{1}{2}$};


  \fill (-.66,0) circle (2pt) node[below left] {$-\frac{1}{6}$};

  \fill (.66,0) circle (2pt) node[below right] {$\frac{1}{6}$};


    \fill (1.5,-1.5) circle (2pt) node[below] {$\zeta-\frac{\tau}{2}$};

    \fill (-1.5,-1.5) circle (2pt) node[below] {$-\zeta-\frac{\tau}{2}$};

        \fill (1.5,1.5) circle (2pt) node[below] {$\zeta+\frac{\tau}{2}$};

    \fill (-1.5,1.5) circle (2pt) node[below] {$-\zeta+\frac{\tau}{2}$};

\end{tikzpicture}

\end{minipage}
\begin{minipage}{.35\textwidth}
\end{minipage}
\begin{minipage}{.48\textwidth}
\begin{tikzpicture}[scale=1]
  \draw[->] (-2.6, 0) -- (3, 0) node[above] {$\mathrm{Re}(x)$};
  \draw[->] (0, -2) -- (0, 2) node[above] {$\mathrm{Im}(x)$};

  \pgfmathsetmacro{\rstart}{3/4}
  \pgfmathsetmacro{\rend}{2}
  \pgfmathsetmacro{\angleone}{120}
  \pgfmathsetmacro{\angletwo}{240}

  \pgfmathsetmacro{\xA}{\rstart*cos(\angleone)}
  \pgfmathsetmacro{\yA}{\rstart*sin(\angleone)}
  \pgfmathsetmacro{\xB}{\rend*cos(\angleone)}
  \pgfmathsetmacro{\yB}{\rend*sin(\angleone)}

    \pgfmathsetmacro{\xC}{\rstart*cos(\angletwo)}
  \pgfmathsetmacro{\yC}{\rstart*sin(\angletwo)}
  \pgfmathsetmacro{\xD}{\rend*cos(\angletwo)}
  \pgfmathsetmacro{\yD}{\rend*sin(\angletwo)}

  \pgfmathsetmacro{\xm}{(\xC + \xD)/2}
  \pgfmathsetmacro{\ym}{(\yC + \yD)/2}

  \pgfmathsetmacro{\vx}{\xD - \xC}
  \pgfmathsetmacro{\vy}{\yD - \yC}
  \pgfmathsetmacro{\theta}{atan2(\vy,\vx)}

  \pgfmathsetmacro{\a}{0.55*(sqrt(\vx*\vx + \vy*\vy)) * 1.2}
  \pgfmathsetmacro{\b}{0.1*(sqrt(\vx*\vx + \vy*\vy)) * 1.2}

  \draw[rotate around={\theta:(\xm,\ym)}, thick, red, domain=0:360, samples=200, variable=\t]
    plot ({\xm + 1.2*\a*cos(\t)}, {\ym + 1.2*\b*sin(\t)});

  \draw[->, thick, red] (-0.43, -1.1) -- ++(-0.12, -0.2);

  \pgfmathsetmacro{\xE}{(\xA + \xC)/2}
  \pgfmathsetmacro{\yE}{(\yA + \yC)/2}


   \draw[->, thick, blue] (\xC/2+\xD/2-0.03,\yC/2+\yD/2+0.03) to[out=135, in=225] (\xA/2+\xB/2-0.03,\yA/2+\yB/2-0.03);

  
  \draw[ultra thick
  ] (\rstart, 0) -- (\rend, 0) ;
  \draw[very thick, decorate, decoration={snake, amplitude=.5mm,segment length=1.5mm}] (\xA,\yA) -- (\xB,\yB);
  \draw[very thick, decorate, decoration={snake, amplitude=.5mm,segment length=1.5mm}] 
    ({\rstart*cos(\angletwo)}, {\rstart*sin(\angletwo)}) -- 
    ({\rend*cos(\angletwo)}, {\rend*sin(\angletwo)});

  
    \filldraw[] (\rstart, 0) circle (0.02) node[above] {\small $x_-$};
    \filldraw[] ({\rstart*cos(\angleone)}, {\rstart*sin(\angleone)}) circle (0.02);
    \filldraw[] ({\rstart*cos(\angletwo)}, {\rstart*sin(\angletwo)}) circle (0.02);

    \filldraw[] (\rend, 0) circle (0.02) node[above] {\small $x_+$};
    \filldraw[] ({\rend*cos(\angleone)}, {\rend*sin(\angleone)}) circle (0.02);
    \filldraw[] ({\rend*cos(\angletwo)}, {\rend*sin(\angletwo)}) circle (0.02);

 \node at (1.375,-0.25) {\small $\mathfrak{I}_0$};

  \node at (-1.2,-1.3) {\small $\mathfrak{I}_-$};

    \node at (-0.4,-1.8) {\small
    \color{red} $x(\cA)$};
    \node at (-1.6,0.2) {\small \color{blue} $x(\cB)$};
    \node at (-0.2,1) {\small  $\mathfrak{I}_+$};
 
\end{tikzpicture}
\end{minipage}

\caption{The fundamental domain $\Delta$ of $T\simeq \bbR^2/M$ (left) and the complex plane with cuts $\mathfrak{I}_\pm$ for $\RR_{0,1}$ (right).
.}
\label{fig:cuts}
\end{figure}
The next Lemma further characterises the image under $x(u)$ of the boundary of $\Delta_{\pm}$ and, in particular, of the fundamental circles $\cA$ and $\cB$ in $\widehat{\bbC}\setminus \mathfrak{I}_\pm$. We define, for $a \in \Delta$,
\[
    \phi_\pm(u) \coloneqq \log x\l(u \pm \frac{\tau}{2}\r) 
    \,,
    \qquad
    \chi_+(u) \coloneqq \log x\l( \tau u + \frac{1}{2}\r)\,,   \qquad  \chi_-(u) \coloneqq \log x\l(-\tau u \r)\,,
    \]
where in all cases we take the branch cut of the logarithm to run along the \emph{negative} real axis. By definition, restricting to $u \in (-1/2,1/2]$, we see that
\[
\exp\phi_-\l((-1/2,1/2]\r) = x(\cA)\,, \qquad \exp \chi_+\l((-1/2,1/2]\r) = x(\cB)\,.
\]

\begin{lem}
For $0<\mathsf{q} \ll 1$ and $v\in (-1/2,1/2]$, we have
\[
\mathrm{Re}(\phi_\pm(v)) = [-\mathfrak{x}, \mathfrak{x}]\,, \qquad 
\mathrm{Im}(\phi_\pm(v)) = \mathrm{Im}(\chi_+(\pm 1/2)) = \mathrm{Im}(\chi_+(\mp 1/2)) =\pm\frac{2\pi}{3}\,, 
\]
\[
\mathrm{Re}(\chi_\pm(v)) = 0\,, \quad  \mathrm{Im}(\chi_\pm'(v)) <0\,,  \quad
 \frac{2 \pi}{3} \leq |\mathrm{Im}(\chi_\pm(v))| \leq \pi\,, 
\]
with $\mathfrak{x}=\mathfrak{x}(q)>0$ real-analytic near $q=0$.

\label{lem:phichi}
\end{lem}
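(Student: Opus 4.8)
The plan is to prove every assertion by explicit computation with the uniformiser $x(u)$ of \eqref{eq:xu}, leaning on three structural facts: the functional equations \eqref{eq:xuprop}; the reality relation $\overline{x(u)}=x(\bar u)$, which holds because $0<\mathsf q<1$ forces the Fourier coefficients of $\vartheta_1$ in \eqref{eq:thetadef} to be real; and the half-period addition formula
\[
\vartheta_1\!\l(w+\tfrac{\tau}{2}\r)=\ri\,\mathsf q^{-1/8}\,\re^{-\ri\pi w}\,\Theta(w)\,,\qquad
\Theta(w)\coloneqq\sum_{m\in\bbZ}(-1)^m\mathsf q^{\,m^2/2}\re^{2\pi\ri m w}
=\prod_{n\geq1}(1-\mathsf q^{\,n})\,\bigl|1-\mathsf q^{\,n-1/2}\re^{2\pi\ri w}\bigr|^{2}\ \ (w\in\bbR)\,,
\]
the last form exhibiting $\Theta(w)>0$ for all real $w$. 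I will also use the naive expansion $\vartheta_1(w)=2\mathsf q^{1/8}\sin\pi w+\cO(\mathsf q^{9/8})$ and its off-axis refinement below.

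\emph{The maps $\phi_\pm$.} Since the zeros and poles of $x$ lie on $\{\mathrm{Im}\,u\in|\tau|\,\bbZ\}$, the map $v\mapsto x(v\pm\tfrac{\tau}{2})$ is continuous and nonvanishing on $(-\tfrac12,\tfrac12]$. From \eqref{eq:xuprop} one has $x(u-\tau)=\varphi^{-2}x(u)=\varphi\,x(u)$, so reality gives, for real $v$, $\overline{x(v+\tfrac\tau2)}=x(v-\tfrac\tau2)=\varphi\,x(v+\tfrac\tau2)$; hence $\varphi\,x(v+\tfrac\tau2)^2\in\bbR_{>0}$, i.e. $\arg x(v+\tfrac\tau2)\in\{-\tfrac\pi3,\tfrac{2\pi}3\}$. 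Continuity and connectedness force this argument to be constant, and the half-period formula pins it down: $x(v+\tfrac\tau2)$ is a fixed unimodular constant times the strictly positive ratio $\Theta(v+\tfrac16)/\Theta(v-\tfrac16)$, and comparison with \eqref{eq:xu} fixes that constant to $\re^{2\pi\ri/3}$. Thus $\mathrm{Im}\,\phi_+(v)\equiv\tfrac{2\pi}3$; since $x(v-\tfrac\tau2)=\overline{x(v+\tfrac\tau2)}$ lies off the branch cut, $\phi_-(v)=\overline{\phi_+(v)}$, so $\mathrm{Im}\,\phi_-(v)\equiv-\tfrac{2\pi}3$ and $\mathrm{Re}\,\phi_\pm(v)=g(v)\coloneqq\log\bigl(\Theta(v+\tfrac16)/\Theta(v-\tfrac16)\bigr)$; the identical computation at $v=\pm\tfrac12$ applied to $x(\tau v+\tfrac12)$ identifies $\mathrm{Im}\,\chi_+(\pm\tfrac12)$ with $\pm\tfrac{2\pi}3$. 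For the interval claim, $g$ is real-analytic and $1$-periodic, and evenness of $\Theta$ makes it odd, so $g\bigl((-\tfrac12,\tfrac12]\bigr)=g\bigl([-\tfrac12,\tfrac12]\bigr)=[-\mathfrak x,\mathfrak x]$ with $\mathfrak x\coloneqq\max g\geq0$; from $g(v)=2\sqrt3\,\mathsf q^{1/2}\sin 2\pi v+\cO(\mathsf q)$ the maximiser is unique, non-degenerate and interior (near $v=\tfrac14$) for $0<\mathsf q\ll1$, so by the implicit function theorem $\mathfrak x=\mathfrak x(\mathsf q)>0$ is real-analytic near $\mathsf q=0$.

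\emph{The maps $\chi_\pm$.} For real $v$, $\overline{-\tau v}=-(-\tau v)$ and $\overline{\tau v+\tfrac12}=-(\tau v+\tfrac12)+1$, so combining $x(-u)=1/x(u)$ and the period-$1$ invariance from \eqref{eq:xuprop} with reality gives $\overline{x(-\tau v)}=1/x(-\tau v)$ and $\overline{x(\tau v+\tfrac12)}=1/x(\tau v+\tfrac12)$; hence both curves lie on the unit circle and $\mathrm{Re}\,\chi_\pm\equiv0$. Writing $\chi_\pm(v)=\ri\,\psi_\pm(v)$ for the continuous real determination of the argument (so the principal-branch imaginary part differs from $\psi_\pm$ by a locally constant multiple of $2\pi$, jumping only where $x$ meets $(-\infty,0)$), I would compute $\psi_-'(v)=-|\tau|\,(x'/x)(-\tau v)$ and $\psi_+'(v)=|\tau|\,(x'/x)(\tau v+\tfrac12)$, both right-hand sides real; so $\mathrm{Im}\,\chi_\pm'<0$ amounts to $x'/x>0$ along $\{\mathrm{Re}\,u=0\}$ and $x'/x<0$ along $\{\mathrm{Re}\,u=\tfrac12\}$. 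On any fixed bounded window one has, uniformly in $\mathsf q$,
\[
\frac{x'}{x}(u)=\pi\cot\pi\bigl(u+\tfrac16\bigr)-\pi\cot\pi\bigl(u-\tfrac16\bigr)+\cO(\mathsf q)
=\frac{-\sqrt3\,\pi/2}{\sin\pi(u+\tfrac16)\,\sin\pi(u-\tfrac16)}+\cO(\mathsf q)\,,
\]
whose sign is the asserted one on each segment (the product of sines is negative on $\{\mathrm{Re}\,u=0\}$ and positive on $\{\mathrm{Re}\,u=\tfrac12\}$); on the two unbounded tails a single exponential of each $\vartheta_1$-series dominates, so that $x'/x$ is nonzero there with the same sign, namely $x'/x(u)=2\sqrt3\,\pi\,\re^{2\pi\ri u}\bigl(1+o(1)\bigr)$ near $u=+\tfrac{\ri|\tau|}{2}$ and its reflection near $u=-\tfrac{\ri|\tau|}{2}$. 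Patching the two regimes gives strict monotonicity of $\psi_\pm$ for $0<\mathsf q\ll1$, and the range $\tfrac{2\pi}3\leq|\mathrm{Im}\,\chi_\pm(v)|\leq\pi$ then follows from this monotonicity together with the endpoint and midpoint values of $x$ along each segment (e.g. $x(\pm\tfrac{\tau}{2}+\tfrac12)$ and $x(\pm\tfrac12)$), computed from \eqref{eq:xu}, \eqref{eq:xuprop} and the half-period formula.

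\emph{The main obstacle.} The genuinely delicate point is this last step for the two segments whose height grows like $|\tau|\to\infty$ as $\mathsf q\to0$ — the edge $\cB$ entering $\chi_+$, and the central vertical segment entering $\chi_-$ — since the naive small-$\mathsf q$ expansion of $x$ controls only a bounded middle portion. One therefore needs the uniform ``single dominant term'' asymptotics of $\vartheta_1$ off the real axis: in the strip $|\mathrm{Im}\,w|<\tfrac{|\tau|}{2}$ exactly one term of the series \eqref{eq:thetadef} dominates, with all others exponentially suppressed uniformly, and this must be spliced to the central expansion. Once that estimate is in hand, everything else is elementary, resting only on the half-period formula and the symmetries \eqref{eq:xuprop} together with reality.
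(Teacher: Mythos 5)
Your treatment of $\phi_\pm$ and of the claim $\mathrm{Re}\,\chi_\pm\equiv 0$ is correct, and in fact more economical than the paper's: you get the constancy of $\mathrm{Im}\,\phi_\pm$ and the unimodularity of $x$ along the vertical edges directly from the reality $\overline{x(u)}=x(\bar u)$ together with the functional equations \eqref{eq:xuprop}, where the paper instead manipulates second logarithmic derivatives via the Weierstrass function. The genuine gap is in the remaining two assertions, $\mathrm{Im}\,\chi_\pm'(v)<0$ and hence $\tfrac{2\pi}{3}\leq|\mathrm{Im}\,\chi_\pm(v)|\leq\pi$: your route is to control the sign of $x'/x$ along the two vertical segments by the naive small-$\mathsf q$ expansion on a bounded window plus a ``single dominant exponential'' estimate on the tails, but these segments have length $|\tau|/2\to\infty$ as $\mathsf q\to 0$, and you yourself flag the required uniform off-axis asymptotics of $\vartheta_1$, and the splicing of the two regimes, as ``the main obstacle'' without carrying it out. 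As written, the monotonicity of $\mathrm{Im}\,\chi_\pm$ — which is exactly what feeds the range bound via the endpoint values $\pm\tfrac{2\pi}{3}$ and the $2\pi$-jump at $v=0$ where $x=-1$ — is therefore asserted, not proved. (A smaller point: your appeal to the implicit function theorem gives $\mathfrak x$ analytic in $\mathsf q^{1/2}$, with $\mathfrak x\sim 2\sqrt3\,\mathsf q^{1/2}$, which is what the paper actually uses later; it is not analytic in $\mathsf q$ itself.)

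The paper avoids the uniform-asymptotics issue entirely by a global, algebraic argument: $\chi_+''(u)=\tau^2\bigl(\wp(\tau u+\tfrac13)-\wp(\tau u+\tfrac23)\bigr)$ is an elliptic function with exactly two double poles per fundamental domain, hence exactly four zeroes counted with multiplicity, which are located explicitly at $u=0$, $u=\tfrac12$, $u=\pm\tfrac{1}{2\tau}$; since only $0$ and $\pm\tfrac12$ lie on the real $v$-segment, $\mathrm{Im}\,\chi_+''$ has a single sign on each of $(-\tfrac12,0)$ and $(0,\tfrac12)$, and combining parity with the (easily computed, negative) value of $\mathrm{Im}\,\chi_+'$ at $v=\pm\tfrac12$ gives $\mathrm{Im}\,\chi_+'<0$ throughout, with no estimate on a growing region needed. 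The same pole-counting device (applied to $\phi_\pm'$, elliptic with two simple poles, hence exactly two zeroes $\pm\zeta$) is what the paper uses to pin down the unique maximum and minimum of $\mathrm{Re}\,\phi_\pm$. If you want to salvage your approach, either import this zero-counting argument, or actually prove the uniform statement you invoke: that in the strip $|\mathrm{Im}\,w|\leq|\tau|/2$ the $k=0$ term of \eqref{eq:thetadef} dominates $\vartheta_1(w)$ with uniformly exponentially small relative error — this is standard but must be stated and used to control $x'/x$ on the whole segment, not just on a bounded window.
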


\begin{proof}

We start by a quick recall of some reality properties of the Weierstrass elliptic function for rectangular lattices. The Weierstrass function is defined by
\beq
\wp(u) \coloneqq \frac{1}{u^2} + \sum_{m \in M \setminus 0} \l[ \frac{1}{(u-m)^2}-\frac{1}{m^2} \r] \,,
\label{eq:wp}
\eeq
in terms of which $T$ is the compactification of the affine cubic
\[
\big\{ (W,Z) \in \bbC^2\,\big|\,
W^2 = 4 Z^3-g_2(\mathsf{q}) Z- g_3(\mathsf{q}) \big\}
\]
via $W \to \wp'(u)$, $Z \to \wp(u)$. As $M$ is invariant under complex conjugation, the  Weierstrass invariants $g_2(\mathsf{q})$ and $g_3(\mathsf{q})$ are real, and therefore $\wp(v) \in \bbR$ for $v \in \bbR$. Moreover, the lattice roots
\[
e_i \coloneqq \wp(\omega_i)\,,\qquad i=1, 2, 3\,,\]
are all real and distinct in this case \cite[Sec.~20.32]{MR1424469}. 

We start by analysing $\mathrm{Im}(\phi_\pm(u))$. Recalling  that
\[
\wp(u) = -\de_u^2\log\theta_1(u) + \mathrm{const},
\]
we can write
\[
\phi_\pm''(u) = \wp\l(u-\frac{1}{6}\pm \frac{\tau}{2}\r)-\wp\l(u+\frac{1}{6}\pm \frac{\tau}{2}\r)\,.
\]
 From the general addition formula for the Weierstrass function under translation by half-periods,
\[
\wp \left(u\pm \omega _i\right)=\frac{\left(e_i-e_j\right) \left(e_i-e_k\right)}{\wp \left(u\right)-e_i}+e_i\,, \qquad i\neq j\neq k\,,
\]
we deduce that 
\beq
\mathrm{Im}\wp\l(v\pm \frac{\tau}{2}
\r) =0
\label{eq:wpreal}
\eeq
for $v \in \bbR$. In particular, by parity and Schwarz-reflection symmetry of $\wp(u)$,
$\phi_\pm''(v)$ is real and odd for $v \in (-1/2,1/2]$. Integrating \eqref{eq:wpreal} once w.r.t. $v$, we find that
\[
\mathrm{Im}\l[
\frac{\theta_1'\l(v\pm \frac{\tau}{2}\r)}{\theta_1\l(v\pm \frac{\tau}{2}\r)}\r]=\mathrm{const}
\]
for $v\in (-1/2,1/2]$, from which we deduce that the difference
\[
\phi_\pm'(v)=\frac{\theta_1'\l(v+\frac{1}{6}\pm \frac{\tau}{2}\r)}{\theta_1\l(v+\frac{1}{6}\pm \frac{\tau}{2}\r)}-\frac{\theta_1'\l(v-\frac{1}{6}\pm \frac{\tau}{2}\r)}{\theta_1\l(v-\frac{1}{6}\pm \frac{\tau}{2}\r)}
\]
is even and real for $v \in \bbR$, and  therefore
$\mathrm{Im}(\phi_\pm(v))$
is constant throughout $(-1/2,1/2]$. Evaluating at $v=0$, and using \eqref{eq:thetaper}, we find
\beq
\phi_\pm(0) = 
\log \frac{\theta_1\l(\frac{1}{6}\pm\frac{\tau}{2}\r)}{\theta_1\l(\frac{1}{6}\mp\frac{\tau}{2}\r)}
=
\log(-\re^{\mp \pi \ri/3})
=\pm \frac{2\pi\ri}{3} = \ri\, \mathrm{Im}(\phi_\pm(v))\,.
\label{eq:phi0}
\eeq
Consider now $\mathrm{Re}(\phi_\pm(v))$. We have just shown that it vanishes at $v=0$, and since $\phi'(v)$ is real and even, it is an odd function of $v$. It is also straightforward to check, by the same quasi-periodicity argument, that $v=1/2$ is also a zero. 
We shall show that it has exactly one maximum and one minimum, with opposite values, for $v \in (-1/2,1/2]$. To see this, 
using \eqref{eq:thetadef}, we can check that 
\[
\phi_\pm'(0)>0 \quad \mathrm{and} \quad \phi_\pm'(\pm 1/2)<0 \quad \mathrm{for}~~ \mathrm{Im}(\tau) \gg 1
\,\, (|\mathsf{q}| \ll 1)\,,
\]
meaning that $\phi_\pm(v)$ has at least two zeroes in the interval $(-1/2,1/2)$. By \eqref{eq:thetaper}, $\phi_\pm'(u)$ is meromorphic and doubly-periodic along $M$ with two simple poles in its fundamental domain: therefore it has {\it exactly} two zeroes $\pm \zeta \in (-1/2,1/2)$, with $\zeta>0$, corresponding to the critical points of $\mathrm{Re}(\phi_\pm(v))$ in the same interval. 
As $\mathrm{Re}(\phi_\pm(v))$ is odd and vanishing at $v=\pm 1/2$, these must be a maximum and a minimum, with
\[
\mathfrak{x} \coloneqq 
\mathrm{Re}(\phi_\pm(\zeta))=-\mathrm{Re}(\phi_\pm(-\zeta))\,.
\]
%
%
%
Consider now $\chi_+(u)$. We have
\[
\chi_+''(u) = \tau^2 \l(\wp\l(\tau u+\frac{1}{3}\r)-\wp\l(\tau u+\frac{2}{3}\r)\r)\,,
\]
and in particular $\chi_+''(u)$ has four zeroes in $\Delta$ counted with multiplicity, having double poles at $u=-1/(3\tau)$ and $u=-2/(3\tau)$. It is elementary to check, using parity and periodicity of $\wp(u)$, that these are located at
\[
u=0\,, \qquad u= \frac{1}{2}\,, \qquad u=\pm \frac{1}{2\tau}\,.
\]
Using the reality property $\overline{\wp}(u)=\wp(\overline{u})$ 
we further have, for $v \in (-1/2, 1/2]$, that
\begin{align*}
\mathrm{Re} \chi_+''(v) = & \frac{\tau^2}{2} \l[\wp\l(\tau\, v+\frac{1}{3}\r)-\wp\l(\tau\, v+\frac{2}{3}\r)+\overline{\wp}\l(\tau\, v+\frac{1}{3}\r)-\overline{\wp}\l(\tau\, v+\frac{2}{3}\r)\r] =0\,,
\end{align*}
hence
\[
\mathrm{Re} \chi_+'(v) = \mathrm{const}\,.
\]
Evaluating at $v=0$, we find
\[
\mathrm{Re} \chi_+'(0) = |\tau| \mathrm{Im} \frac{x'(1/2)}{x(1/2)} = |\tau| \mathrm{Im} \l[
\frac{\vartheta_1'}{\vartheta_1}\l(\frac{2}{3}\r)-
\frac{\vartheta_1'}{\vartheta_1}\l(\frac{1}{3}\r)
\r] = 0\,,
\]
hence
\[
\mathrm{Re} \chi_+(v) = \mathrm{const} = \mathrm{Re} \chi_+(0) = \log(x(1/2))=0 \,.
\]

As far as $\mathrm{Im}(\chi_+(v)) = -\ri \chi_+(v)$ is concerned, the foregoing discussion implies that $\mathrm{Im}(\chi_+''(v))$ is non-constant, odd, and vanishing only at $v=\pm 1/2$ and $v=0$: hence it has strictly negative (resp. positive) values for $v<0$ (resp. $v>0$), 
with exactly one maximum and minimum having opposite signs. As a consequence, $\mathrm{Im}(\chi_+'(v))$ is even and with exactly one minimum at the origin, and a maximum at $v=\pm \frac{1}{2}$, where it is readily checked to be negative, hence 
\[
\mathrm{Im}(\chi_+'(v))<0 \quad  \mathrm{for}~~ -1/2<v\leq 1/2\,.\]
As we took the branch cut of the logarithm in the definition of $\chi_+(v)$ to run along the negative real axis, $\mathrm{Im}(\chi_+(v))$
will have a $2\pi$-jump discontinuity at $v=0$, and is otherwise continuous and differentiable for $v \in (-1/2,0)$ (resp. $v \in (0,1/2]$), where it is monotonically strictly decreasing, and negative and bounded above by $\mathrm{Im}(\chi_+(-1/2))=-2\pi/3$ (resp. positive and bounded below by $\mathrm{Im}(\chi_+(1/2))=2\pi/3$). A \emph{verbatim} application of the same arguments to $\chi_-(u)$ concludes the proof.
\end{proof}
\begin{prop}
 There exists an invertible map $t = t(\mathsf{q})$ 
 near $t=\mathsf{q}=0$ such that
\bit
\item $x([-1/2 \pm \tau/2, 1/2 \pm \tau_2))$ is a branched double cover of the cut $\mathfrak{I}_\pm$; 
\item $x(\cA)$ is a simple clockwise oriented loop 
around the cut $\mathfrak{I}_-$\,;
\item $x(\cB)$ is a simple clockwise oriented arc of circumference between $\varphi^{-1}$ and $\varphi$\,.
\eit
\label{prop:CtoT}
\end{prop}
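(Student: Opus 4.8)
The plan is to extract all three assertions from the detailed behaviour of the theta quotient $x(u)$ recorded in \cref{lem:phichi}, to fix $t=t(\mathsf{q})$ by matching the endpoints of the two descriptions of the cut, and to establish invertibility near the origin by a leading-order computation in which the conifold model degenerates to a Gaussian Unitary Ensemble. First I would read off the image of the horizontal edges of $\Delta$. Along $u=v\pm\tau/2$, $v$ ranging over a period, \cref{lem:phichi} gives $\log|x(u)|=\mathrm{Re}\,\phi_\pm(v)\in[-\mathfrak{x},\mathfrak{x}]$ while $\arg x(u)=\mathrm{Im}\,\phi_\pm(v)=\pm\tfrac{2\pi}{3}$ is constant, so the edge maps into the radial segment $\varphi^{\pm1}[e^{-\mathfrak{x}},e^{\mathfrak{x}}]$. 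Since (from the proof of \cref{lem:phichi}) $\mathrm{Re}\,\phi_\pm$ is odd, vanishes at $v=0$ and $v=\tfrac12$, and has a single interior maximum at $\zeta$ of value $\mathfrak{x}$ and a single minimum at $-\zeta$ of value $-\mathfrak{x}$, this map is two-to-one onto $\varphi^{\pm1}[e^{-\mathfrak{x}},e^{\mathfrak{x}}]$ with simple ramification precisely over its two endpoints; using $x(u+\tau)=\varphi^2 x(u)$ one checks these endpoints are $\varphi^{\pm1}x_-$ and $\varphi^{\pm1}x_+$, so the edge is a branched double cover of $\mathfrak{I}_\pm=[\varphi^{\pm1}x_-,\varphi^{\pm1}x_+]$ as soon as $e^{\pm\mathfrak{x}}=x_\pm$. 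For $\cB$ the argument is simpler still: \cref{lem:phichi} gives $|x|=1$ and $\arg x(u)=\mathrm{Im}\,\chi_+(v)$ \emph{strictly decreasing}, running from $-\tfrac{2\pi}{3}$ through $-\pi$ to $-\tfrac{4\pi}{3}$, so $x(\cB)$ is a simple clockwise arc of the unit circle from $\varphi^{-1}$ to $\varphi$ through $-1$, avoiding $1$ because $|\arg x|\ge\tfrac{2\pi}{3}$ there.

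Next comes $\cA$, which sits just inside $\Delta$ at height $\mathrm{Im}(u)=-|\tau|/2+\epsilon$. Here I would expand $\log x(v-\tfrac{\tau}{2}+\ri\epsilon)=\phi_-(v)+\ri\epsilon\,\phi_-'(v)-\tfrac{\epsilon^2}{2}\phi_-''(v)+O(\epsilon^3)$, using that $\phi_-'$ and $\phi_-''$ are real on a period, with $\phi_-'(0)>0$, $\phi_-'(\pm\tfrac12)<0$, simple zeros of $\phi_-'$ at $\pm\zeta$, and $\phi_-''(\zeta)<0<\phi_-''(-\zeta)$ (all from the proof of \cref{lem:phichi}). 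Thus $x(\cA)$ keeps modulus $e^{\mathrm{Re}\,\phi_-(v)}$ but is pushed to the counterclockwise side of the ray $\bbR_{>0}\varphi^{-1}$ for $v\in(-\zeta,\zeta)$ and to its clockwise side for $v$ outside $[-\zeta,\zeta]$, while near $v=\pm\zeta$ the $O(\epsilon^2)$ term $\propto-\phi_-''(\pm\zeta)$ carries $x(\cA)$ just past the two endpoints $\varphi^{-1}e^{\pm\mathfrak{x}}$; for $\epsilon$ small this makes $x(\cA)$ a simple closed curve hugging $\mathfrak{I}_-$ and enclosing it and no other branch point. Following the four arcs in turn shows that the clockwise side of the cut is traced outward from $\varphi^{-1}$ towards the endpoints and the counterclockwise side back towards $\varphi^{-1}$, which is exactly a clockwise winding about $\mathfrak{I}_-$. (As a cross-check, the $\mathsf{q}\to0$ expansion $x(v-\tfrac{\tau}{2}+\ri\epsilon)=\varphi^{-1}\big(1+2\sqrt{3\mathsf{q}}\,(\sin 2\pi v+2\pi\ri\epsilon\cos 2\pi v)+O(\mathsf{q})\big)$ displays $x(\cA)$ as a clockwise-traversed ellipse about $\varphi^{-1}$.)

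Finally I would \emph{define} $t(\mathsf{q})$ by $x_+(t)=e^{\mathfrak{x}(\mathsf{q})}$ (equivalently $x_-(t)=e^{-\mathfrak{x}(\mathsf{q})}$, since $x_-=1/x_+$); with this choice the cuts match as in the first two steps, so all three bullets hold, and only invertibility near $0$ remains. The off-criticality statement \cref{prop:offcrit}, together with the fact that as $t\to0$ the conifold model degenerates to a GUE centred at $x=1$ (because $\Phi(x)=\tfrac12(x-1)^2+O((x-1)^3)$ and $\Lambda(x,y)$ is constant to leading order on the diagonal), gives the Wigner asymptotics $x_+(t)=1+2\sqrt t+O(t)$, whence $\mathfrak{x}(\mathsf{q})=\log x_+(t)=2\sqrt t+O(t)$; on the other side the $\mathsf{q}\to0$ expansion of the theta quotient gives $\mathfrak{x}(\mathsf{q})=2\sqrt{3\mathsf{q}}+O(\mathsf{q})$. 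Comparing, $\sqrt t=\sqrt{3\mathsf{q}}+O(\mathsf{q})$ and therefore $t(\mathsf{q})=3\mathsf{q}+O(\mathsf{q}^{3/2})$: the square roots cancel, the leading coefficient is non-zero, and $t(\mathsf{q})$ is invertible near $t=\mathsf{q}=0$. Since the orientations established in the first two steps depend only on $\mathsf{q}$, and $\cA,\cB$ avoid $\mathfrak{I}_\pm\cup\{0,\infty\}$ for $\epsilon$ small, this completes the argument.

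The routine part is the first step, which essentially repackages \cref{lem:phichi}. The genuine difficulty is the \emph{orientation} of $x(\cA)$: because $\arg x$ is exactly constant along the lower edge of $\Delta$, the clockwise sense is invisible at zeroth order and must be teased out from the first-order deformation in $\epsilon$ together with the sign data $\phi_-'(0)>0$ and $\phi_-''(\pm\zeta)$, while the same expansion carried to $O(\epsilon^2)$ at $v=\pm\zeta$ is what guarantees that $x(\cA)$ does not touch the cut. A secondary point requiring care is pinning down the non-vanishing leading constants in the last step, which is why one identifies the $t\to0$ limit of the ensemble with the (unit-'t~Hooft-coupling) GUE rather than merely invoking analyticity in $t^{1/2}$.
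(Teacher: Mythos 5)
Your proposal follows essentially the same route as the paper: you define $t(\mathsf{q})$ by matching the cut endpoints $\log x_\pm(t)=\pm\mathfrak{x}(\mathsf{q})$, obtain invertibility near the origin by comparing the Wigner--semicircle coefficient $\log x_+(t)=2\sqrt{t}+\cO(t)$ with the theta-series coefficient $\mathfrak{x}(\mathsf{q})=2\sqrt{3\mathsf{q}}+\cO(\mathsf{q})$ (the paper packages this as Lagrange--Buermann inversion of $t^{1/2}\kappa(t)=\mathsf{q}^{1/2}\mu(\mathsf{q})$ with $\kappa(0)=2$, $\mu(0)=2\sqrt{3}$), and read the three geometric statements off \cref{lem:phichi}, extracting the orientation of $x(\cA)$ from the first-order $\epsilon$-deformation of the edge, which is exactly the paper's Cauchy--Riemann/conformality argument. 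The only blemish is your summarising sentence that ``the clockwise side of the cut is traced outward \dots and the counterclockwise side back towards $\varphi^{-1}$'', which contradicts your own (correct) earlier assignment -- the modulus increases precisely on the counterclockwise side, for $v\in(-\zeta,\zeta)$ -- but this does not affect the conclusion, since the side/direction data you actually established (and your $\mathsf{q}\to 0$ ellipse cross-check) do give the clockwise winding.
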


\begin{proof}
The change-of-variable in question is achieved by imposing that the end-points of the cut agree:
\beq
\log x_\pm(t) = \pm \mathfrak{x}(\mathsf{q})\,.
\label{eq:xplfrx}
\eeq
By \cref{prop:offcrit}, and since $x_+=1/x_-$, there exists a function $\kappa(t)$, real-analytic  in $t^{1/2}$ near $t=0$, such that
\[
\log x_+(t) = - 
\log x_-(t) = t^{1/2}\, \kappa(t)\,.
\]
In particular, we must have 
$\kappa(0) =2 \neq 0$ from the Gaussian limit to the Wigner's semi-circle law for the equilibrium distribution. 
On the other hand, from the uniformly convergent $\mathsf{q}^{1/2}$-series expression of $\phi_\pm(u)$ in \eqref{eq:thetadef}, there exists a function $\mu(\mathsf{q})$, analytic in $\mathsf{q}^{1/2}$ for $\mathsf{q}\ll 1$, such that
\[
\mathfrak{x} = \mathrm{Re}(\phi_\pm(\zeta)) = \mathsf{q}^{1/2}\,\mu(\mathsf{q}) \,,
\]
where, at the first few orders we find
\beq
\mu(\mathsf{q}) = \sqrt{3} \l[
2+3 \mathsf{q}+\frac{27 \mathsf{q}^2}{20}+\frac{341 \mathsf{q}^3}{56}+\cO\left(\mathsf{q}^{7/2}\right)
\r]
\label{eq:muq}
\eeq
so that, in particular, $\mu(0)= 2\sqrt{3}\neq 0$. Then, by Lagrange--Buermann inversion, \emph{imposing} the identification
\eqref{eq:xplfrx} induces an invertible map $t = t(\mathsf{q})$, real-analytic\footnote{We will prove that this map is actually analytic in $\mathsf{q}$, rather than $\mathsf{q}^{1/2}$, in \cref{rmk:tq}.} 
 $\mathsf{q}^{1/2}$,
 under which the critical values of $\exp \phi_\pm(v)$ are mapped to the endpoints of the cuts $\mathfrak{I}_\pm$  by \cref{lem:phichi}.
Equivalently, we have
\[
\varphi^{\pm 1} x_+ =
 x\l(\zeta\pm \frac{\tau}{2}\r) \,, \qquad \varphi^{\pm 1} x_-  \coloneqq
 x\l(-\zeta\pm \frac{\tau}{2}\r) \,,
\]
and where furthermore, by \cref{lem:phichi}, $x(\tau v+\frac{1}{2})=\exp(\chi_+(v))$ maps bijectively $v \in [-1/2,1/2])$ onto the arc of circumference $x(\cB)$ centred at the origin between $\varphi^{-1}$ and $\varphi$, oriented towards the latter.
As far as $\cA$ is concerned, by \cref{lem:phichi}, $\exp \phi_\pm(v)$ realises the interval $(-1/2,1/2]$ as a double-cover of the cuts $\mathfrak{I}_\pm$, branched at $\varphi^{\pm 1} x_+$ and $\varphi^{\pm 1} x_-$. Shifting the argument $v$ towards the interior of the fundamental domain by $\mp \ri \epsilon$ ($0<\epsilon \ll 1)$,  we have
\[
\mathrm{Im}\l(\phi_\pm(v\mp \ri \epsilon)\r) = \pm \frac{2\pi}{3}+ \epsilon  \mathrm{Re}\l(\phi_\pm(v)\r) + \cO(\epsilon^2)\,,
\]
where we have used the Cauchy--Riemann equations for $\phi_\pm(u)$. As $\mathrm{Re}\l(\phi_\pm(v)\r)$ is odd and has exactly two critical points, this implies that $x(v\pm \frac{\tau-2 \epsilon}{2}) = \exp \phi_\pm(v\mp \ri \epsilon)$ maps $\cA=(-1/2, 1/2]$ to a simple loop around $\mathfrak{I}_\pm$, which is readily checked to be oriented clockwise since the tangent vectors 
\[
x_* \de_{\mathrm{Re}(u)} =
\de_v \exp \phi_-(v+ \ri \epsilon)|_{v=-1/2}\,, \quad
x_*\de_{\mathrm{Im}(u)} =\de_v \exp \chi_+(v)|_{v=-1/2}\,,
\]
must form a right-handed basis of $T_{x=\varphi^{-1}} \bbC \simeq \bbC$ by conformality of $x(u)$. 
\end{proof}

The identification $t = t(\mathsf{q})$, $x_\pm = \re^{\pm \mathfrak{x}}$ will henceforth be assumed throughout all the discussion to follow.  

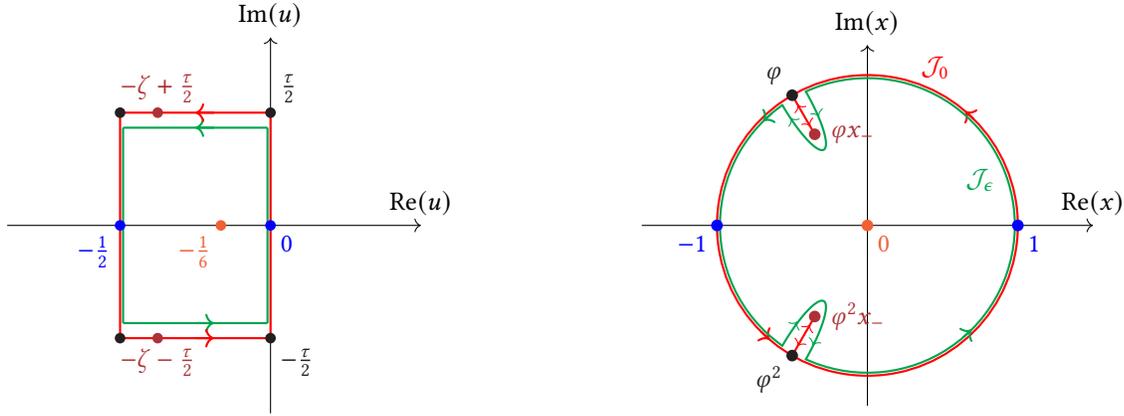
\begin{figure}

\begin{minipage}{.49\textwidth}

\begin{tikzpicture}[scale=1]
  \draw[->] (-3.5, 0) -- (2, 0) node[above] {\small $\mathrm{Re}(u)$};
  \draw[->] (0, -2.5) -- (0, 2.5) node[above] {\small $\mathrm{Im}(u)$};

  \draw[thick, red] (0, 1.5) -- (-2, 1.5);
  \draw[thick, red] (-2, 1.5) -- (-2, -1.5);
  \draw[thick, red] (-2, -1.5) -- (0, -1.5);
  
  \draw[thick, Green] (-1.96, -1.3) -- (-0.04, -1.3);

  \draw[thick, Green] (-0.04, -1.3) -- (-0.04, 1.3);

  \draw[thick, Green] (-0.04, 1.3) -- (-1.96, 1.3);

  \draw[thick, Green] (-1.96, 1.3) -- (-1.96, -1.3);

\draw[->, Green, thick] (-1, -1.3) -- (-.75, -1.3);

\draw[->, Green, thick] (-.75, 1.3) -- (-1, 1.3);

\draw[->, red, thick] (-1, -1.5) -- (-0.75, -1.5);

\draw[->, red, thick] (-0.75, 1.5) -- (-1, 1.5);

  \draw[thick, red] (0, -1.5) -- (0, 1.5);

  \fill[blue] (0, 0) circle (2pt) node[below right] {\small $0$};

  \fill[blue] (-2,0) circle (2pt) node[below left] {\small $-\frac{1}{2}$};

  \fill[RedOrange] (-.66,0) circle (2pt) node[below left] {\small $-\frac{1}{6}$};


    \fill[Maroon] (-1.5,-1.5) circle (2pt) node[below] {\small $-\zeta-\frac{\tau}{2}$};

    \fill[Maroon] (-1.5,1.5) circle (2pt) node[above] {\small $-\zeta+\frac{\tau}{2}$};

    \fill[Black] (-2,-1.5) circle (2pt); 

    \fill[Black] (-2,1.5) circle (2pt);
    
    \fill[Black] (0,-1.5) circle (2pt) node[below right] {\small $-\frac{\tau}{2}$};

    \fill[Black] (0,1.5) circle (2pt) node[above right] {\small $\frac{\tau}{2}$};

\end{tikzpicture}

\end{minipage}
\begin{minipage}{.5\textwidth}
\begin{tikzpicture}[scale=2]
  \draw[->] (-1.5, 0) -- (1.5, 0) node[above] {\small $\mathrm{Re}(x)$};
  \draw[->] (0, -1.2) -- (0, 1.2) node[above] {\small $\mathrm{Im}(x)$};

  \draw[thick, red] (0,0) circle(1);
  \draw[->, red, thick] (0.707, 0.707) -- (0.649, 0.760);

  \draw[->, red, thick] (-0.707, -0.707) -- (-0.649, -0.760);
  
  \foreach \angle/\label in {120/{$\omega$}, 240/{$\varphi^2$}} {

    \draw[thick, red] 
      ({cos(\angle)}, {sin(\angle)}) -- 
      ({0.7*cos(\angle)}, {0.7*sin(\angle)});
          \draw[red, <->] 
      ({0.93*cos(\angle)}, {0.93*sin(\angle)}) -- 
      ({0.76*cos(\angle)}, {0.76*sin(\angle)});

  }

  \coordinate (A) at ({0.98*cos(240+5)}, {0.98*sin(240+5)});
  \coordinate (B) at ({0.98*cos(120-5)}, {0.98*sin(120-5)});
  \coordinate (C) at ({(1 - 0.25)*cos(120)}, {(1 - 0.25)*sin(120)});
  \coordinate (D) at ({0.98*cos(120+5)}, {0.98*sin(120+5)});

  \coordinate (E) at ({0.98*cos(120+5)}, {0.98*sin(120+5)});
  \coordinate (F) at ({0.98*cos(240-5)}, {0.98*sin(240-5)});
  \coordinate (G) at ({(1 - 0.25)*cos(240)}, {(1 - 0.25)*sin(240)});
  \coordinate (H) at ({0.98}, {0});

  \draw[Green, thick]
    (F) arc[start angle=235, end angle=125, radius=0.98];

  \draw[Green, thick]
    (B) arc[start angle=115, end angle=0, radius=0.98];

  \draw[Green, thick]
    (H) arc[start angle=0, end angle=-115, radius=0.98];
    
  \def\xa{-0.49}  
  \def\ya{0.85}  
  \def\a{0.4}   
  \def\b{0.09}   
  \def\angle{120} 

  \draw[Green, thick, domain=90:270, samples=100, variable=\t]
    plot ({\xa + \a*cos(\t)*cos(\angle) - \b*sin(\t)*sin(\angle)},
          {\ya + \a*cos(\t)*sin(\angle) + \b*sin(\t)*cos(\angle)});

   \draw[Green, ->] ({0.82* cos(\angle-6)}, {0.82*sin(\angle-6)}) --  ({0.79* cos(\angle-6)}, {0.79*sin(\angle-6)});

   \draw[Green, ->] ({0.85* cos(\angle+5.5)}, {0.85*sin(\angle+5.5)}) --  ({0.88* cos(\angle+5.5)}, {0.88*sin(\angle+5.5)});
  \def\xa{-0.49}  
  \def\ya{-0.85}  
  \def\a{0.4}   
  \def\b{0.09}   
  \def\angle{240} 

  \draw[Green, thick, domain=90:270, samples=100, variable=\t]
    plot ({\xa + \a*cos(\t)*cos(\angle) - \b*sin(\t)*sin(\angle)},
          {\ya + \a*cos(\t)*sin(\angle) + \b*sin(\t)*cos(\angle)});

   \draw[Green, ->] ({0.82* cos(\angle-6)}, {0.82*sin(\angle-6)}) --  ({0.79* cos(\angle-6)}, {0.79*sin(\angle-6)});

   \draw[Green, ->] ({0.85* cos(\angle+5.5)}, {0.85*sin(\angle+5.5)}) --  ({0.88* cos(\angle+5.5)}, {0.88*sin(\angle+5.5)});
   
  \draw[->, Green, thick] (0.649*0.98, -0.760*0.98) -- (0.707*0.98, -0.707*0.98);

    \draw[->, Green, thick] (-0.649*0.98, 0.760*0.98) -- (-0.707*0.98, 0.707*0.98);

  \fill[RedOrange] (0,0) circle (1.1pt) node[below right] {\small $0$};

  \fill[Black] (-0.5,0.866) circle (1.1pt) node[above left] {\small $\varphi$};

  \fill[Black] (-0.5,-0.866) circle (1.1pt) node[below left] {\small $\varphi^2$};

  \fill[Maroon] (-0.5*0.7,0.866*0.7) circle (1.1pt) node[ right] {\small $~\varphi x_-$}; 

  \fill[Maroon] (-0.5*0.7,-0.866*0.7) circle (1.1pt) node[ right] {\small $~\varphi^2 x_-$}; 

  \fill[blue] (-1,0) circle (1.1pt) node[below left] {\small $-1$};

  \fill[blue] (1,0) circle (1.1pt) node[below right] {\small $1$};

\node[red] at (0.45,1.05) {\small $\cJ_0$};
\node[Green] at (0.75,0.3) {\small $\cJ_\epsilon$};

\end{tikzpicture}
\end{minipage}
\caption{The boundary rectangles of $\Delta_-$ (left, red) and $\Delta^{[\epsilon]}_-$ (left, green) and their images $\cJ_0$ (right, red) and $\cJ_\epsilon$ (right, green) under $x(u)$.}
\label{fig:fund_dom_hol}
\end{figure}

\begin{lem}
The restriction of $x(u)$ to $\overline{\Delta}_-$ is holomorphic and surjective 
onto the closed unit disk $\overline{\mathbb{D}}$. 
\label{lem:xusurj}
\end{lem}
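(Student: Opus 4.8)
\emph{Proof plan.} Holomorphy is immediate: the poles of $x(u)$ are precisely the zeros of $\vartheta_1(u-1/6)$, i.e.\ the lattice translates of $u=\tfrac16$, whose real parts lie in $\tfrac16+\bbZ$, disjoint from $[-\tfrac12,0]$; hence none lies in the closed rectangle $\overline{\Delta}_-=\{u:-\tfrac12\le\mathrm{Re}(u)\le 0,\ |\mathrm{Im}(u)|\le\tfrac{|\tau|}{2}\}$, and $x$ is holomorphic on an open neighbourhood of $\overline{\Delta}_-$. For surjectivity the strategy is: (a) show $x(\partial\Delta_-)\subset\partial\bbD\cup\mathfrak{I}_+^{\mathrm{in}}\cup\mathfrak{I}_-^{\mathrm{in}}$, where $\mathfrak{I}_\pm^{\mathrm{in}}\coloneqq\mathfrak{I}_\pm\cap\overline{\bbD}=[\varphi^{\pm1}x_-,\varphi^{\pm1}]$ (using $0<x_-=x_+^{-1}<1$); (b) deduce $x(\overline{\Delta}_-)\subset\overline{\bbD}$ by the maximum modulus principle; (c) upgrade this to an equality by a connectedness argument.

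\emph{Step (a).} On the vertical edges $\mathrm{Re}(u)\in\{-\tfrac12,0\}$ one has $|x(u)|=1$: since $\mathsf{q}\in(0,1)$, the $\vartheta_1$-series \eqref{eq:thetadef} has real coefficients, so $\overline{x(u)}=x(\bar u)$; on either edge $\bar u$ differs from $-u$ by an integer, so the $1$-periodicity and the inversion $x(u)x(-u)=1$ of \eqref{eq:xuprop} give $x(u)\overline{x(u)}=x(u)x(-u)=1$. On the horizontal edges $\mathrm{Im}(u)=\pm\tfrac{|\tau|}{2}$, restricted to $\mathrm{Re}(u)\in[-\tfrac12,0]$, \cref{lem:phichi} gives $x(v\pm\tfrac{\tau}{2})=\re^{\phi_\pm(v)}$ with $\arg x=\mathrm{Im}\,\phi_\pm(v)=\pm\tfrac{2\pi}{3}$ and $|x|=\re^{\mathrm{Re}\,\phi_\pm(v)}$; since $\mathrm{Re}\,\phi_\pm$ is odd, vanishes at $v=0,\pm\tfrac12$ and attains its extrema $\pm\mathfrak{x}$ at $v=\pm\zeta$, it lies in $[-\mathfrak{x},0]$ on $[-\tfrac12,0]$, so — using $x_\pm=\re^{\pm\mathfrak{x}}$ from \cref{prop:CtoT} — these two edges map onto $\mathfrak{I}_+^{\mathrm{in}}$ and $\mathfrak{I}_-^{\mathrm{in}}$ respectively. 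Thus $x(\partial\Delta_-)\subset\partial\bbD\cup\mathfrak{I}_+^{\mathrm{in}}\cup\mathfrak{I}_-^{\mathrm{in}}\subset\overline{\bbD}$, and step (b) follows since $x$ has no poles on the compact set $\overline{\Delta}_-$.

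\emph{Step (c).} Put $U\coloneqq\bbD\setminus(\mathfrak{I}_+^{\mathrm{in}}\cup\mathfrak{I}_-^{\mathrm{in}})$, the unit disk slit along two radial segments each joining an interior point to $\partial\bbD$; then $U$ is connected and $\overline{U}=\overline{\bbD}$. The image $x(\Delta_-)$ is open ($x$ being non-constant holomorphic) and meets $U$, as $0=x(-\tfrac16)\in U$ with $-\tfrac16\in\Delta_-$ and $0\notin\mathfrak{I}_\pm^{\mathrm{in}}$. By step (a), $U\cap x(\partial\Delta_-)=\emptyset$, hence $U=\big(U\cap x(\Delta_-)\big)\sqcup\big(U\setminus x(\overline{\Delta}_-)\big)$ writes the connected set $U$ as a disjoint union of two open sets, the first nonempty; so $U\setminus x(\overline{\Delta}_-)=\emptyset$, i.e.\ $U\subset x(\overline{\Delta}_-)$, and passing to closures $\overline{\bbD}=\overline{U}\subset x(\overline{\Delta}_-)$. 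Together with step (b) this gives $x(\overline{\Delta}_-)=\overline{\bbD}$.

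\emph{Where the work is.} The one substantive step is (a), and inside it the horizontal-edge analysis, which depends entirely on \cref{lem:phichi} and the identification $x_\pm=\re^{\pm\mathfrak{x}}$ of \cref{prop:CtoT}; the vertical edge $\mathrm{Re}(u)=0$ is the only part of $\partial\Delta_-$ not already analysed there, and is dispatched by the $|x|=1$ identity above. Everything else — the maximum modulus bound and the connectedness argument of step (c) — is formal. One could moreover refine the argument via the argument principle to see that $x$ is injective on $\Delta_-$ and restricts to a biholomorphism $\Delta_-\cap x^{-1}(U)\to U$, since $x$ has a single, simple zero in $\Delta_-$ (at $u=-\tfrac16$), but this is not needed for the present lemma.
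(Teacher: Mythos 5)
Your proof is correct, but the way you close the surjectivity step differs genuinely from the paper. Both arguments share the same boundary analysis: you use \cref{lem:phichi} and the identification $x_\pm=\re^{\pm\mathfrak{x}}$ of \cref{prop:CtoT} to place $x(\de\Delta_-)$ inside $\de\bbD\cup\mathfrak{I}_+^{\rm in}\cup\mathfrak{I}_-^{\rm in}$, followed by the maximum modulus bound (your Schwarz-reflection computation $\overline{x(u)}=x(\bar u)$, valid since $\mathsf{q}\in(0,1)$, handles the vertical edges directly; note the paper's $\chi_-(u)=\log x(-\tau u)$ in \cref{lem:phichi} already covers the edge $\mathrm{Re}(u)=0$, so your reflection argument is an alternative rather than a gap-filler, though it is a clean one). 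Where you diverge is in upgrading the boundary containment to surjectivity: the paper argues that $x(u)^3$ descends to a degree-three elliptic function, deduces injectivity of $x$ on the truncated domains $\Delta_-^{[\epsilon]}$, identifies the image of $\de\Delta_-^{[\epsilon]}$ with a Jordan curve $\cJ_\epsilon$, invokes the Jordan curve theorem and Brouwer to conclude $x(\mathrm{Int}\,\Delta_-^{[\epsilon]})=\mathrm{Int}(\cJ_\epsilon)$, and then takes the union over $\epsilon$ and closures. You bypass injectivity and the $\epsilon$-limit entirely: since $x(\overline{\Delta}_-)$ is compact and $x(\Delta_-)$ is open (open mapping theorem), the slit disk $U=\bbD\setminus(\mathfrak{I}_+^{\rm in}\cup\mathfrak{I}_-^{\rm in})$, which is connected with $\overline{U}=\overline{\bbD}$ and avoids $x(\de\Delta_-)$, is written as a disjoint union of the two open sets $U\cap x(\Delta_-)$ and $U\setminus x(\overline{\Delta}_-)$; the first is nonempty (it contains $0=x(-1/6)$), so the second is empty and closures give $\overline{\bbD}\subset x(\overline{\Delta}_-)$. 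This clopen argument is more elementary and robust (no appeal to Jordan/Brouwer or to degree counts), at the cost of not yielding the injectivity of $x$ on $\Delta_-$, which the paper's route provides as a by-product and which you correctly note can be recovered separately via the argument principle if needed; for the lemma as stated, your argument suffices.
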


\begin{proof}
Holomorphicity is obvious, since $x(u)$ has only a simple pole in $\Delta$ at $u=1/6$, and is therefore regular for $\mathrm{Re}(u)<0$. 
By \cref{lem:phichi,prop:CtoT}, we know that $x(u)$ maps the boundary of $\Delta_-$ surjectively onto the union of the unit circle with the two segments 
\[
\mathfrak{I}_\pm^- \coloneqq \mathfrak{I}_\pm \cap \mathbb{D}\,,
\]
corresponding to the portion of the cuts $\mathfrak{I}_\pm$ contained in the unit disk. We will denote the resulting curve by $\cJ_0$ (see \cref{fig:fund_dom_hol}). In particular, we have $|x(u)| \leq 1$ on $\de\Delta_-$, implying that $|x(u)|<1$ across the open part $\Delta_-^o$ by the Maximum Modulus Theorem. 
The claim will follow from proving that $x(\Delta^0_-)$ is equal to the open unit disk with the two segments 
$\mathfrak{I}_\pm^-$
removed.\\
To see this, note that, by \eqref{eq:xuprop}, $x(u)^3$ is meromorphic on $\Delta$ and doubly-periodic along $M$, and therefore it descends to a non-constant meromorphic map $x: T \to \widehat{\bbC}$.
By Riemann--Hurwitz, this is a degree three branched covering of the Riemann sphere ramified at $\log x= \pm \infty$ $(u=\pm 1/6)$ with ramification index 3, and $\log x= -\frac{2\pi \ri}{3} + \log x_\pm$ 
$(u=\pm \zeta-\frac{\tau}{2})$ with ramification index 2. In particular, this shows that on $\Delta^{[\epsilon]}_-$ ($0<\epsilon\ll 1$), where the map is holomorphic and unramified except for a cubic branch point at $x=0$, its cubic root $x(u)$ is single-valued (by definition) and injective (as it has degree one). Recall, from the proof of \cref{lem:phichi} that, for $0<\epsilon\ll 1$, $x(u)$ maps $\de \Delta_-^{[\epsilon]}$ to a Jordan curve $\cJ_\epsilon$ contained in the unit disk, and not intersecting the two segments 
$\cI_\pm^-$
(\cref{fig:fund_dom_hol}): the curve asymptotes to the limit (non-simple) curve $\cJ_0$ as $\epsilon \to 0$.
By holomorphicity, the image of the interior of $\Delta^{[\epsilon]}_-$ is a connected open subset of the interior of $\cJ_\epsilon$ having $\cJ_\epsilon$ as its boundary; and by injectivity and the Brouwer comparison theorem, it is furthermore simply-connected. Then, by the Jordan curve theorem, it must coincide with the whole of $\mathrm{Int}(\cJ_\epsilon)$, and we conclude that
\[
x\l(\bigcup_{0< \epsilon\ll 1} \Delta_-^{[\epsilon]}\r)= 
\bigcup_{0< \epsilon\ll 1} \mathrm{Int}(\cJ_\epsilon) =  \mathbb{D}
\setminus \cup_{\bullet = \pm} \mathfrak{I}_\bullet^-
\,,
\]
where $\mathbb{D}$ is the open unit disk. Taking closures, and by continuity of $x(u)$,
\[
x(\overline{\Delta_-}) =
\overline{x(\Delta_-)}=
\overline{\mathbb{D}}\,.
\]
\end{proof}
\begin{cor}
$x:\Delta \to \widehat{\bbC}$ is holomorphic and surjective onto the Riemann sphere.
\end{cor}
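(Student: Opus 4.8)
The plan is to read off the statement from \cref{lem:xusurj} using the inversion symmetry $x(-u)=1/x(u)$ of \eqref{eq:xuprop}. Holomorphicity requires no work: by the quasi-periodicity \eqref{eq:thetaper} of $\vartheta_1$, the function $x(u)$ of \eqref{eq:xu} has a single simple pole inside $\Delta$, at $u=1/6$, hence is meromorphic, i.e. holomorphic as a map into $\widehat{\bbC}$. Only the surjectivity statement has content.

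For surjectivity I would split the closed fundamental domain as $\overline{\Delta}=\overline{\Delta}_-\cup\overline{\Delta}_+$, where $\overline{\Delta}_\pm\coloneqq\{u\in\overline{\Delta}:\pm\mathrm{Re}(u)\geq 0\}$, and note that the reflection $u\mapsto -u$ interchanges the two halves, $\overline{\Delta}_+=-\overline{\Delta}_-$. By \cref{lem:xusurj}, $x(\overline{\Delta}_-)=\overline{\mathbb{D}}$. Feeding this into $x(-u)=1/x(u)$ from \eqref{eq:xuprop} gives
\[
x(\overline{\Delta}_+)=\big\{\,1/w : w\in\overline{\mathbb{D}}\,\big\}=\{z\in\widehat{\bbC}:|z|\geq 1\}\cup\{\infty\}\,,
\]
where the value $w=0$ (attained since $x(-1/6)=0$ and $-1/6\in\overline{\Delta}_-$) produces $\infty\in x(\overline{\Delta}_+)$; equivalently, $x$ has its pole at $1/6\in\overline{\Delta}_+$. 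Taking the union,
\[
x(\overline{\Delta})=\overline{\mathbb{D}}\cup\big(\{|z|\geq 1\}\cup\{\infty\}\big)=\widehat{\bbC}\,,
\]
which is the asserted surjectivity (with $\Delta$ read as its closure; the two boundary edges of $\overline{\Delta}$ dropped in passing to $\Delta$ are accounted for by $x(\Delta)\cup\varphi\,x(\Delta)$, since $x(u+1)=x(u)$ and $x(u+\tau)=\varphi^2 x(u)$).

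I do not expect a genuine obstacle here: the substantive work -- pinning down $x(\overline{\Delta}_-)$ through the Jordan-curve and Brouwer-invariance argument -- is precisely \cref{lem:xusurj}, and once it is available the corollary drops out from the $\bbZ/2$ symmetry alone. The only subtlety to watch is the behaviour at the pole: one must record that $w\mapsto 1/w$ sends $0$ to $\infty$, so that $x(\overline{\Delta}_+)$ is the full closed exterior $\{|z|\geq 1\}\cup\{\infty\}$ of the disk and its union with $\overline{\mathbb{D}}$ exhausts $\widehat{\bbC}$.
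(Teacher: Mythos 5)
Your argument is correct and is exactly the paper's route: the paper's proof of this corollary is the one-line observation that it follows from $x(u)=x(-u)^{-1}$ together with \cref{lem:xusurj}, which is precisely the reflection argument you spell out (including the point that the inversion sends $0\in x(\overline{\Delta}_-)$ to the pole at $u=1/6$). Your additional care about closures and the boundary edges is a harmless elaboration of the same idea.
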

\begin{proof}
This follows from $x(u)=x(-u)^{-1}$, and \cref{lem:xusurj}.
\end{proof}




\subsubsection{Solving the Riemann Hilbert problem}
To summarise the discussion so far, we have proved that for $0< \mathsf{q} \ll 1$ the map $x(u)$ in \eqref{eq:xu} is a meromorphic function on $\bbC$, quasi-periodic along $M$, periodic along $\bbZ \subset \bbR$, and whose restriction to the fundamental domain of $M$ is surjective onto $\widehat{\bbC}$. Furthermore, under the identification $t = t(\mathsf{q})$ of \cref{prop:CtoT,rmk:tq}, the images of the fundamental edges of $\Delta$ map to the base circle and axis of the cylinder $\bbR \times S^1$, obtained by excising the cuts $\mathfrak{I}_\pm \subset \bbC$ and identifying them antipodally as in \cref{fig:glueing}. \\ The main upshot of all the preparatory work in the previous Section was then to provide us with a holomorphic version of \eqref{eq:diagT} with respect to the respective complex structures. This puts us now in a position to solve the RHP 
for $\cW(x)$ (and therefore $\RR_{0,1}(x)$) using Riemann--Roch.
\begin{prop}
Under the change-of-variables $t = t(\mathsf{q})$ in \cref{prop:CtoT}, 
the symmetrised planar resolvent $\RR_{0,1}(x)$ reads
\beq
\RR_{0,1}(x(u)) = 
\frac{\ri}{\sqrt{3} }  
 \log\frac{(x(u)+1) \vartheta_1(u-1/6)}{\vartheta_1(u-1/2)}\,.
\label{eq:r01}
\eeq
\end{prop}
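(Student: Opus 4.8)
The plan is to solve the non-local Riemann--Hilbert problem \textbf{BVP(i)}--\textbf{BVP(vii)} for $\cW(x)$ after transporting it, through the uniformisation $x(u)$ of \eqref{eq:xu} and the identification $t=t(\mathsf q)$ of \cref{prop:CtoT}, onto the elliptic curve $T\simeq\bbC/M$; once $\cW$ is known in closed form, $\RR_{0,1}$ follows by a single integration in $u$ via \eqref{eq:Wdef}. Equivalently --- and this is the route I would actually carry out, since it bypasses having to guess an antiderivative --- I would \emph{verify} that $x\,\partial_x$ of the claimed right-hand side of \eqref{eq:r01}, minus $\tfrac{1}{\sqrt3\,\ri}\tfrac{x}{1+x}$, satisfies \textbf{BVP(i)}--\textbf{BVP(vii)}, and invoke uniqueness: $\RR_{0,1}$, hence $\cW$, is rigidly determined by the unique equilibrium measure of \cref{prop:minimum} through \eqref{eq:R01def}.

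The first and essential step is to see that $\cW\circ x$, which is well defined on $\Delta$ because by \cref{lem:xusurj} and its corollary $x$ restricts to a biholomorphism from $\Delta$ minus the preimages of $\mathfrak I_\pm$ onto $\widehat{\bbC}\setminus\mathfrak I_\pm$, extends across $\partial\Delta$ to a single-valued meromorphic function on $T$. Across the vertical edges this is immediate from $x(u{+}1)=x(u)$. Across the horizontal edges it is precisely the sewing relation \eqref{eq:Wspeq} of \textbf{BVP(ii)}: using $|x(u{+}\tau)|=|x(u)|$ and \eqref{eq:xuprop}, the identification $u\sim u{+}\tau$ of the horizontal edges of $\Delta$ sends a point of $\mathfrak I_+$ to the point of $\mathfrak I_-$ of the same modulus, and --- tracking which cut-side maps to which, as in the reality analysis of \cref{lem:phichi} --- exactly matches the two sides as in \eqref{eq:Wspeq}. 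Finally, the inverse-square-root blow-ups of \textbf{BVP(iv)} at $x=\varphi^{\pm1}x_\pm$ become, in the local coordinate $u$ at the ramification points $u=\pm\zeta+\tfrac{\tau}{2}$, at worst simple poles, so $\cW\circ x$ is meromorphic, not merely continuous, on $T$.

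The divisor and the normalisation of this elliptic function are then read off from the remaining conditions. \textbf{BVP(v)} gives a simple zero at the point over $x=0$ $(u=-\tfrac{1}{6})$; \textbf{BVP(iii)} a simple pole at the point over $x=-1$ with residue fixed by $-1/(\sqrt3\,\ri)$; \textbf{BVP(iv)} simple poles at $u=\pm\zeta+\tfrac{\tau}{2}$; and \textbf{BVP(vi)} finiteness, with value $1/(\sqrt3\,\ri)$, at the point over $x=\infty$ $(u=\tfrac{1}{6})$. Abel's theorem locates the residual zeros and hence fixes $\cW\circ x$ as an explicit product of $\vartheta_1$'s up to one multiplicative constant, which is then pinned down by \textbf{BVP(vi)} together with the period constraint \textbf{BVP(vii)} $\tfrac{\ri}{2\pi}\oint_{\cC}\cW\,\tfrac{\rd x}{x}=t$ --- consistent with $t=t(\mathsf q)$ because, by \cref{prop:CtoT}, $x(\cA)$ is precisely the loop $\cC$ around $\mathfrak I_-$. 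Substituting the closed form of $\cW$ into $\partial_u\big(\RR_{0,1}(x(u))\big)=\tfrac{x'(u)}{x(u)}\big[\cW(x(u))+\tfrac{1}{\sqrt3\,\ri}\tfrac{x(u)}{1+x(u)}\big]$ then exhibits the right-hand side as a sum of logarithmic derivatives of $\vartheta_1$'s and of $x(u){+}1$, whose antiderivative is $\tfrac{\ri}{\sqrt3}\log\!\big((x(u){+}1)\vartheta_1(u{-}\tfrac{1}{6})/\vartheta_1(u{-}\tfrac{1}{2})\big)$ up to a constant; the constant is fixed to $0$ by \textbf{BVP(v)}, evaluating at $u=-\tfrac{1}{6}$, where $x=0$, $\RR_{0,1}$ vanishes by \eqref{eq:R01def}, and the $\vartheta_1$-quotient equals $1$ by \eqref{eq:thetaper}.

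The main obstacle is the passage to $T$ in the first step: establishing rigorously that the $\vartheta_1$-quotient of \eqref{eq:r01} genuinely descends to a single-valued meromorphic function on $T$ cut along \emph{exactly} $\mathfrak I_\pm$, with no spurious monodromy and with the four turning-point singularities removable in the coordinate $u$. This requires keeping track of the $\mu_3$-covering combinatorics of \cref{sec:equivcorr} --- which sheet and which cut-side is glued to which --- and makes essential use of the precise shape of \eqref{eq:Wspeq} and of \cref{lem:phichi}. Granting this, the rest is a routine application of Riemann--Roch on a genus-one curve together with the classical heat and addition identities for $\vartheta_1$.
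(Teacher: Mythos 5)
Your overall strategy -- pulling the non-local Riemann--Hilbert problem for $\cW$ back to the elliptic curve $T$ through the uniformisation $x(u)$ and the identification $t=t(\mathsf{q})$, and then fixing the answer by divisor bookkeeping with $\vartheta_1$'s -- is the paper's strategy. The paper, however, is constructive: it multiplies by $\partial_u\log x$, so that the simple poles of $\cW$ at the ramification points and the pole of $\partial_u\log x$ over $x=0$ cancel, places $r=\cW\,\partial_u\log x$ in the two-dimensional space $H^0\l(T,\cO(\mathsf{p}_{1/2}+\mathsf{p}_{1/6})\r)$, fixes the two constants, and integrates; no converse statement about the boundary-value problem is ever needed, because the formula is \emph{derived} from properties the true $\cW$ is already known to possess. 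Your preferred route is instead ``verify the candidate and invoke uniqueness'', and that is where there is a genuine gap: the uniqueness you invoke is the wrong one. Uniqueness of the equilibrium measure (\cref{prop:minimum}) says that the actual $\cW$ is a well-defined function; it does not say that \textbf{BVP(i)}--\textbf{BVP(vii)} admits at most one solution, which is exactly what a verify-and-conclude argument requires.

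Moreover, rigidity genuinely fails for \textbf{BVP(i)}--\textbf{BVP(vi)} alone. If $\cW_1,\cW_2$ are two solutions, their difference $D$ descends to an elliptic function with at most simple poles at $u=\pm\zeta-\tau/2$ (the pole at $u=1/2$ cancels because \textbf{BVP(iii)} fixes its residue, and the value at $u=1/6$ cancels by \textbf{BVP(vi)}) and zeros at $u=\pm 1/6$; since the Abel sums of $\mathsf{p}_{1/6}+\mathsf{p}_{-1/6}$ and $\mathsf{p}_{\zeta-\tau/2}+\mathsf{p}_{-\zeta-\tau/2}$ both vanish modulo $M$, such a non-zero $D$ exists (concretely $D=c/\partial_u\log x$), so (i)--(vi) leave a one-parameter ambiguity. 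The normalisation \textbf{BVP(vii)} is therefore essential, and the uniqueness argument has to be made: $D\,\partial_u\log x$ is holomorphic elliptic, hence a constant, and the vanishing of its $\cA$-period forces $D\equiv 0$. You do list (vii) among the conditions to check, so your route can be completed, but note that verifying (vii) for the candidate is itself not a formal check: it amounts to showing that the $\cA$-period of the proposed $\cW$ equals the 't~Hooft coupling under the endpoint-matching map $t=t(\mathsf{q})$ of \cref{prop:CtoT}, i.e.\ essentially the computation of \cref{rmk:tq} (alternatively, one can kill the residual constant $c$ by demanding single-valuedness in $x$ of the integrated $\RR_{0,1}$, since $c\neq 0$ would produce a term $c\,u$ breaking periodicity under $u\to u+1$). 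As written, neither the rigidity statement nor this last verification is supplied, and the justification offered for uniqueness is a non sequitur; supplying the short Riemann--Roch/Abel argument above, or switching to the paper's construction via $r=\cW\,\partial_u\log x$, closes the gap. The remaining ingredients you describe -- descent to $T$ via \eqref{eq:Wspeq}, \cref{lem:phichi} and \cref{prop:CtoT}, the pole/zero accounting from \textbf{BVP(iii)}--\textbf{BVP(vi)}, and fixing the additive constant of integration at $u=-1/6$ -- match the paper's proof.
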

\begin{proof}
By 
{\bf BVP(i)--(iv)} and \cref{prop:CtoT}, the pull-back  under the diagram \eqref{eq:diagT} of $\cW$ to $T$ is a single-valued and meromorphic function on $T$ 
with simple poles at $u=\frac{1}{2}$ (i.e. $x=-1$) and $u= \pm \zeta - \frac{\tau}{2}$ (i.e. $x=\varphi^{\pm 1} x_\pm)$,
\[
\cW \in H^0\l(T, \cO(\mathsf{p}_{1/2}+\mathsf{p}_{-\zeta-\tau/2}+\mathsf{p}_{\zeta-\tau/2})\r)\,.
\]
It has, moreover, a simple zero at $u=-\frac{1}{6}$ (i.e. $x=0$), by {\bf BVP(v)}. 
Likewise, by \eqref{eq:xu},
\[
\de_u \log x \in H^0\l(T, \cO(\mathsf{p}_{1/6}+\mathsf{p}_{-1/6})\r)
\]
with $x'(u)/x(u)$ an elliptic function having simple poles with residue $\mp 1$ at $\pm 1/6$, and simple zeroes at the ramification points of the cover $u=\pm\zeta-\tau/2$, where $x'(u)$ vanishes. 
Then, the product
\[
r \coloneqq \cW \otimes \de_u \log x \in H^0\l(T, \cO(\mathsf{p}_{1/2}+\mathsf{p}_{1/6})\r)\,,
\]
is meromorphic with only poles at $u=1/2$ and $u=1/6$.
%
%
In $\bbC^{n+1}$ with the standard scalar product and orthonormal basis $\{v_0, \dots, v_n\}$, the space of meromorphic functions on $T$ with simple poles at $\mathsf{p}_{u_i}$, by Riemann--Roch and Kodaira vanishing,  is the hyperplane
\beq
H^0\l(T, \cO(\mathsf{p}_{u_1}+\mathsf{p}_{u_n})\r)
 = \l(\sum_{i=1}^n v_i\r)^\perp 
\label{eq:h0vi}
\eeq
upon identifying 
\[
v_0 = 1\,, \qquad v_i = \frac{\vartheta_1'(u-u_i)}{\vartheta_1(u-u_i)}\,,
\]
the constraint \eqref{eq:h0vi} arising from the residue theorem (equivalently, from imposing that a linear combination of $v_i$ be periodic under $u \to u+\tau$).
In our case, this implies that
\[
r(u) = \gamma \l(\frac{\vartheta_1'(u-1/2)}{\vartheta_1(u-1/2)}-\frac{\vartheta_1'(u-1/6)}{\vartheta_1(u-1/6)}
\r)+\delta
\]
for yet-to-be-fixed constants $\gamma$, $\delta \in \bbC$. Imposing that, from {\bf BVP(iii)}, the meromorphic 1-form $\cW(x) \rd \log x$ has residue $1/(\sqrt{3}\ri)$ at $x=-1$ 
sets
\[
\frac{1}{\sqrt{3} \ri}=\Res_{x=-1} \cW(x) \rd \log x  =\Res_{u=1/2} r(u) \rd u = \gamma \,.
\]
Further imposing that, from {\bf BVP(v)}, $r(u)$ has a zero at $u=-1/6$ yields
\[
\delta=\frac{1}{\sqrt{3} \ri}
\l(\frac{\vartheta_1'(-1/3)}{\vartheta_1(-1/3)}-\frac{\vartheta_1'(-2/3)}{\vartheta_1(-2/3)}
\r)
=0\,,
\]
where we have used \eqref{eq:thetaper}. All in all, we find
\beq
\RR_{0,1}(x(u)) = \int^{u} r(u') \rd u' -\frac{1}{\sqrt{3}  \ri} \int^{x(u)} \frac{\rd x'}{x'+1} =
\frac{\ri}{\sqrt{3} }  
 \log\frac{(x(u)+1) \vartheta_1(u-1/6)}{\vartheta_1(u-1/2)} + \psi
\label{eq:r01sol}
\eeq
for some constant $\psi \in \bbC$. Imposing that $\RR_{0,1}(x) = \cO(x)$ near $x=0$ ($u=-1/6$), and using again that $\vartheta_1(-2/3)=\vartheta_1(-1/3)$, sets $\psi=0$.
\end{proof}
\begin{rmk}
With \eqref{eq:r01} at hand, we can write down the coefficients of the change-of-variables $t = t(\mathsf{q})$ by integrating term-by-term along $\cA$ the expression of $\RR_{0,1}(x(u))\rd u$, as a 1-form valued uniformly convergent $\mathsf{q}$-series, using \eqref{eq:thetaper} and imposing {\bf BVP(vii)}. In particular, this entails that the map $t(\mathsf{q})$ is 
real-analytic near $t=\mathsf{q}=0$. We find
\[
t(\mathsf{q}) = 3 \mathsf{q}+\frac{9 \mathsf{q}^2}{2}+9 \mathsf{q}^3+\frac{39 \mathsf{q}^4}{4}+\frac{72 \mathsf{q}^5}{5}+\cO\left(\mathsf{q}^6\right)\,,
\]
so that, imposing that $t^{1/2} \kappa(t) = \mathsf{q}^{1/2} \mu(q)$ and using \eqref{eq:muq}, the small 't~Hooft coupling expansion of the branch-points $x_\pm(t)$ reads
\[
\log x_\pm(t) = \pm \sqrt{t} \l[
2 +\frac{t}{2}-\frac{3 t^{2}}{5}+\frac{115 t^{3}}{3024}-\frac{341 t^{4}}{864}+\cO\left(t^{5}\right)
\r]\,.
\]
\label{rmk:tq}
\end{rmk}

\subsection{Riemann--Hilbert problem for the planar two-point function}
A similar strategy applies to the symmetrised planar two-point function, which can be characterised as the solution of a particular boundary value problem in each argument. From \eqref{eq:lambda}, we have
\begin{align}
& \frac{x_2}{\pi\ri}\oint_{\mf{I}_0} x_1\de_{x_1}\Lambda(x_1, \zeta)  W^{(\phi,\Lambda)}_{0,2}(x_2,\zeta)\rd\zeta = \nn \\ 
& -\frac{x_2}{2\pi\ri}\oint_{\mf{I}_0} \bigg[\frac{x_1 \varphi}{x_1 \varphi-\zeta}+\frac{x_1 \varphi^2}{x_1 \varphi^2-\zeta}\bigg]W^{(\phi,\Lambda)}_{0,2}(x_2,\zeta)\rd\zeta\,, \nn \\
& =
-x_1 x_2 \varphi W^{(\phi,\Lambda)}_{0,2}(\varphi x_1, x_2)
-x_1 x_2 \varphi^2 W^{(\phi,\Lambda)}_{0,2}(\varphi^2 x_1, x_2)\,,
\label{eq:W02conv}
\end{align}
and plugging \eqref{eq:W02conv} into
\eqref{eq:cutom02} we find that,
for all $x_1 \in \mathfrak{I}_0$,
\begin{align}
&    \RR_{0,2}(\varphi^{2} x_1 \re^{+\ri 0}, x_2)-    \RR_{0,2}(\varphi x_1 \re^{-\ri 0}, x_2) = \nn \\
= &
x_1 x_2 \bigg[
\varphi    \, W^{(\Phi,\Lambda)}_{0,2}( x_1 \re^{+\ri 0}, \varphi x_2) -  \varphi^2  \, W^{(\Phi,\Lambda)}_{0,2}(x_1 \re^{+\ri 0}, \varphi^2 x_2) \nn \\
& -  \varphi^2  \, W^{(\Phi,\Lambda)}_{0,2}(\varphi x_1, \varphi x_2)
+  \, W^{(\Phi,\Lambda)}_{0,2}(\varphi x_1, \varphi^2 x_2) 
-  
    \,
 \, W^{(\Phi,\Lambda)}_{0,2}(\varphi^2 x_1, \varphi x_2)  \nn \\ & +   \, \varphi 
 W^{(\Phi,\Lambda)}_{0,2}(\varphi^2 x_1, \varphi^2 x_2)+ \varphi  \, W^{(\Phi,\Lambda)}_{0,2}(x_1 \re^{-\ri 0}, \varphi x_2)
-\varphi^2  \, W^{(\Phi,\Lambda)}_{0,2}(x_1 \re^{-\ri 0}, \varphi^2 x_2)\bigg]
\,,
\nn \\
= &
\frac{\varphi x_1 x_2 }{(x_1-\varphi x_2)^2}-
\frac{\varphi^2 x_1 x_2 }{( x_1-\varphi^2 x_2)^2}=\frac{\varphi^2 x_1 x_2 }{(\varphi^2 x_1- x_2)^2}-
\frac{\varphi x_1 x_2 }{( \varphi x_1-\varphi^2 x_2)^2}\,.
\label{eq:speqR02-1}
\end{align}
Symmetrically, for $x_2 \in \mathfrak{I}_0$,
\begin{align}
 \RR_{0,2}(x_1, \varphi^{2} x_2 \re^{+\ri 0})-    \RR_{0,2}(x_1, \varphi x_2 \re^{-\ri 0}) = &
\frac{\varphi x_1 x_2 }{(\varphi x_1- x_2)^2}-
\frac{\varphi^2 x_1 x_2 }{(\varphi^2 x_1-x_2)^2}
\nn \\
=&
\frac{\varphi^2 x_1 x_2 }{(\varphi^2 x_2- x_1)^2}-
\frac{\varphi x_1 x_2 }{( \varphi x_2-\varphi^2 x_1)^2}
\,.
\label{eq:speqR02-2}
\end{align}
Eqs.~\eqref{eq:speqR02-1}--\eqref{eq:speqR02-2} are the analogue of the saddle-point equation \eqref{eq:RHR01der} for the logarithmic derivative of the symmetrised planar resolvent. As we did in \eqref{eq:Wdef}--\eqref{eq:Wspeq}, we homogenise \eqref{eq:speqR02-1}--\eqref{eq:speqR02-2} by defining
\beq
\cY(x_1, x_2) \coloneqq \RR_{0,2}(x_1, x_2) + \frac{x_1 x_2}{(x_1-x_2)^2}\,,
\label{eq:Y}
\eeq 
in terms of which they read
\begin{align}
\cY(\varphi^{\pm} x_1 \re^{\pm\ri 0}, x_2)-    \cY(\varphi^{\pm} x_1 \re^{\mp\ri 0}, x_2)=0\,, \qquad x_1 \in \mathfrak{I}_0\,, \\
\cY(x_1, \varphi^{\pm} x_2 \re^{\pm\ri 0})-    \cY(x_1,\varphi^{\pm} x_2 \re^{\mp\ri 0})=0\,, \qquad x_2 \in \mathfrak{I}_0\,.
\end{align}
The arguments used in the analysis of $\cW$ in \cref{sec:RHR01}, repeated \emph{verbatim} here, entail that $\cY$ pulls back via $x_1(u_1)$, $x_2(u_2)$ to a meromorphic function on $T\times T$, symmetric under $u_1 \leftrightarrow u_2$, and therefore to a meromorphic function on the symmetric square $T^{(2)}$ of the elliptic curve $T$.
%
More in detail, by \eqref{eq:Y}, $\cY$ is regular away from the diagonal $x_1=x_2$, where it has a double pole with leading coefficient 1,
\[
Y \in H^0\l(T^{(2)}, \cO_{T^{(2)}}(2\Delta)\r)\,.
\]
A classical result in the theory of elliptic ruled surfaces is that $|2\Delta|$ is a base-point-free pencil of elliptic curves (see e.g. \cite[Prop.~3.2]{MR1423277}), and thus  \[ h^0(T^{(2)},\cO_{T^{(2)}}(2\Delta))=2\,.\] 
From \eqref{eq:wp}, explicit generators for $\Gamma(\cO_{T^{(2)}}(2\Delta))$ are given by $\l\{1, \wp(u_1-u_2)\r\}$. Further recalling that
\[
\cY = \frac{x_1 x_2}{(x_1-x_2)^2}+\cO((x_1-x_2)^0)\,,\]
we find
\[
\cY(x(u_1), x(u_2)) \frac{x'(u_1)x'(u_2)}{x(u_1) x(u_2)} = \frac{1}{(u_1-u_2)^2} + \cO((u_1-u_2)^0)\,, 
\]
hence
\beq
\cY(x_1, x_2) \frac{x'(u_1)x'(u_2)}{x(u_1) x(u_2)} = \wp(u_1-u_2) + \xi
\label{eq:2pwp}
\eeq
for $\xi \in \bbC$. From 
\[
\oint_{\alpha} \RR_{0,2}(x_1,x_2) \frac{\rd x_1}{x_1 x_2}=\oint_{\alpha} \cY(x_1,x_2) \frac{\rd x_1}{x_1 x_2} =0
\]
we find
\beq
\xi = -\int_0^1 \wp(u_1-u_2) \rd u_1 = \frac{\pi^2}{3} E_2(\mathsf{q})\,,
\label{eq:2ptorelli}
\eeq
where $E_2(\mathsf{q})$ denotes as usual the second Eisenstein series:
\[
E_2(\mathsf{q}) \coloneqq 1-24 \sum_{n\geq 1} \frac{n \mathsf{q}^{n}}{1-\mathsf{q}^{n}}\,.
\]
We have proved the following:
\begin{prop}
Under the change-of-variables $t = t(\mathsf{q})$ in \cref{prop:CtoT,rmk:tq}, 
the symmetrised planar 2-point function $\cS^{(\Phi,\Lambda)}_{0,2}$  is
\beq
\cS^{(\Phi,\Lambda)}_{0,2} = 
\RR_{0,2}(x(u_1), x(u_2)) \frac{\rd x(u_1)}{x(u_1)}  \frac{\rd x(u_2)}{x(u_2)} = \l[\wp(u_1-u_2)+
\frac{\pi^2}{3} E_2(\mathsf{q})\r] \rd u_1 \rd u_2
\,.
\label{eq:r02}
\eeq
\end{prop}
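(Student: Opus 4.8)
The plan is to transplant the argument of \cref{sec:RHR01} to the two-point function: turn the cut characterisation of $\omega^{(\Phi,\Lambda)}_{0,2}$ in \cref{prop:EO}(ii) into a non-local Riemann--Hilbert problem for the symmetrised correlator $\RR_{0,2}$ in each variable, homogenise it, transport it to the elliptic curve $T$ via the uniformisation $x(u)$ of \eqref{eq:xu}, and solve it by Riemann--Roch on $T^{(2)}$. Concretely, I would first process the convolution term $\tfrac{1}{\pi\ri}\oint_{\mf{I}_0}\rd_{z_1}\Lambda(z_1,\zeta)\,\tfrac{\omega^{(\Phi,\Lambda)}_{0,2}(z_2,\zeta)}{\rd\zeta}\,\rd\zeta$ appearing in \eqref{eq:cutom02}: exactly as in the proof of \cref{prop:RHPR01}, the partial-fraction identity $x_1\de_{x_1}\Lambda(x_1,\zeta)=-\tfrac12\bigl(\tfrac{x_1\varphi}{x_1\varphi-\zeta}+\tfrac{x_1\varphi^2}{x_1\varphi^2-\zeta}\bigr)$ collapses this contour integral to a combination of values of $W^{(\Phi,\Lambda)}_{0,2}$ at $\mu_3$-rotated arguments, which is \eqref{eq:W02conv}. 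Feeding \eqref{eq:W02conv} into \eqref{eq:cutom02} and unwinding the definition \eqref{eq:R02def} of $\RR_{0,2}$ then yields the jump relations \eqref{eq:speqR02-1}--\eqref{eq:speqR02-2} across $\mf{I}_\pm$, separately in each of the two arguments, the inhomogeneous right-hand sides coming from the $\rd x\,\rd x/(x-x)^2$ term of \eqref{eq:cutom02} after passing to the multiplicative coordinates.

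Next I would homogenise by introducing $\cY$ as in \eqref{eq:Y}: the rational kernel $x_1 x_2/(x_1-x_2)^2$ carries precisely the jumps across $\mf{I}_\pm$ needed to cancel the right-hand sides of \eqref{eq:speqR02-1}--\eqref{eq:speqR02-2}, leaving $\cY$ to satisfy the homogeneous double-periodicity conditions in each variable. From this point the analysis performed for $\cW$ in \cref{sec:RHR01} applies \emph{verbatim}, one argument at a time: by \cref{prop:CtoT} and \cref{lem:xusurj} (together with its corollary, which supplies the holomorphic uniformisation $x:\Delta\to\widehat{\bbC}$ and the gluing of $\mf{I}_\pm$ into $T$), the pullback of $\cY$ along $(x(u_1),x(u_2))$ descends to a single-valued meromorphic function on $T\times T$, symmetric under $u_1\leftrightarrow u_2$ because $\RR_{0,2}$ is, and hence to a meromorphic function on $T^{(2)}$. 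Its only singularity is a double pole along the diagonal $\Delta\subset T^{(2)}$ with leading coefficient $1$ — no poles survive at the branch points, since $\omega^{(\Phi,\Lambda)}_{0,2}$ is the Bergman kernel and is regular there, in contrast to $\cW$, which does develop poles at the ramification points (\textbf{BVP(iv)}) — so $\cY\in H^0\bigl(T^{(2)},\cO_{T^{(2)}}(2\Delta)\bigr)$.

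Finally I would invoke the classical fact that $|2\Delta|$ is a base-point-free pencil on $T^{(2)}$, so that $h^0\bigl(T^{(2)},\cO_{T^{(2)}}(2\Delta)\bigr)=2$ with explicit basis $\{1,\wp(u_1-u_2)\}$ (cf.~\cite[Prop.~3.2]{MR1423277} and \eqref{eq:wp}); both members are symmetric, and $\wp(u_1-u_2)$ supplies exactly the normalised diagonal double pole. Hence $\cY\,\tfrac{x'(u_1)x'(u_2)}{x(u_1)x(u_2)}=\wp(u_1-u_2)+\xi$ for a single constant $\xi\in\bbC$, which is then pinned down by the $\alpha$-period normalisation $\oint_\alpha\omega^{(\Phi,\Lambda)}_{0,2}(z_1,\cdot)=0$ of \cref{prop:EO}(ii) (equivalently $\oint_\alpha\cY\,\rd x_1/(x_1 x_2)=0$): this forces $\xi=-\int_0^1\wp(u_1-u_2)\,\rd u_1$, and evaluating that integral via the Weierstrass $\zeta$-function and the Legendre relation gives $\xi=\tfrac{\pi^2}{3}E_2(\mathsf q)$, which is \eqref{eq:r02}.

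I expect the only genuine difficulty — the remainder being a faithful transcription of \cref{sec:RHR01} — to be combinatorial: keeping careful track of the signs and of the six $\mu_3$-rotated arguments of \eqref{eq:R02def} through the substitution into \eqref{eq:cutom02}, so that the inhomogeneity really does collapse to the compact rational form of \eqref{eq:speqR02-1}--\eqref{eq:speqR02-2}, and keeping the branch-cut orientations on $\mf{I}_\pm$ consistent with the conventions fixed in the resolvent computation. Checking that $\cY$ acquires no spurious poles at the ramification points $u=\pm\zeta-\tau/2$ is the one place where additional input is needed beyond pure transcription, namely the regularity of the planar two-point differential at the branch points.
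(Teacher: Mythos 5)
Your proposal is correct and follows essentially the same route as the paper: collapse the convolution in \eqref{eq:cutom02} via the partial-fraction identity for $\de_{x_1}\Lambda$ to get the jump relations \eqref{eq:speqR02-1}--\eqref{eq:speqR02-2}, homogenise with $\cY$ as in \eqref{eq:Y}, descend to $T^{(2)}$ by the same gluing used for $\cW$, identify $\cY$ inside the two-dimensional space $H^0\l(T^{(2)},\cO_{T^{(2)}}(2\Delta)\r)$ spanned by $\{1,\wp(u_1-u_2)\}$, and fix the constant by the vanishing $\alpha$-period, giving $\xi=\tfrac{\pi^2}{3}E_2(\mathsf{q})$. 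Your extra remark on regularity at the ramification points is exactly what the paper's characterisation of $\omega^{(V,L)}_{0,2}$ in \cref{prop:EO}(ii) supplies, so no genuinely new input is needed there.
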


%
%
%

\section{The Conifold Gap}
\label{sec:lp2}
\subsection{Gromov--Witten invariants of local $\bbP^2$}

We write $K_{\bbP^2}$ for the total space $\mathrm{Tot}_{\bbP^2}(\omega_{\bbP^2})$ of the canonical bundle on the projective plane, 
and denote $H$ the pull-back to $K_{\bbP^2}$ of the class of a line in the cohomology of $\bbP^2$. For $d>0$, 
the moduli stack $\overline{M}_{g}(K_{\bbP^2}, d\,H)$ of genus-$g$, degree $d H$ stable maps to $K_{\bbP^2}$ is proper
  and of virtual dimension zero, with
  virtual fundamental class:
\[
[\overline{M}_{g}(K_{\bbP^2}, d\,H)]^{\rm vir}  \coloneqq [\overline{M}_{g}(\bbP^2, d\,H)]^{\rm vir} \cap c_{\rm top}(-R^\bullet \pi_* \mathrm{ev}^* \cO_{\bbP^2}(-3))
\,.
\]
The large radius genus-$g$ GW potential of $K_{\bbP^2}$ is the generating function of the degrees of the virtual fundamental class for each fixed $g\geq 0$:
\[
\mathrm{GW}_g^{\rm LR}(Q) \coloneqq \sum_{d >0} Q^d N_{g, d H}^{K_{\bbP^2}} \coloneqq  \int_{[\overline{M}_g(K_{\bbP^2}, d H)]^{\rm vir}} 1\,.
\]
%

\subsection{Higher genus local mirror symmetry}


\label{sec:HV}

Recall that the Hori--Iqbal--Vafa mirror family of  $K_{\bbP^2}$ is described by the family of elliptic curves over the weighted projective line $\bbP(3,1)$ \cite{Hori:2000ck, Coates:2018hms},
\[
\cZ = \l\{\l([z_0:z_1:z_2:z_3], y\r) \in \bbP^3 \times \bbP(3,1)\, \Big|\, \sum_{i=0}^3 z_i=0\,,\,   z_1 z_2 z_3=y z_0^3 \r\} \longrightarrow \bbP(3,1)\,.
\]

The family has singular fibres at the two values $y=-1/27$ (the {\it conifold point}) and $y=0$ (the {\it large radius point}), away from which the fibres are smooth elliptic curves, given by the normalisation of the closure in $\bbP^2$ of
\beq
\Big\{(z_1,z_2) \in (\bbC^\times)^2 \,\Big|\, z_1+z_2+\frac{y}{z_1 z_2}+1=0 \Big\}\,.
\label{eq:HV}
\eeq
For such a regular fibre $\cZ_{y}$ over $y \notin \{0, -1/27\}$, write
\[
\Pi_\gamma(y) = \frac{1}{2\pi\ri}\int_\gamma \log z_1 \rd \log z_2
\]
for the period of the (multi-valued) 1-form $\log z_1 \rd \log z_2$ over $\gamma \in H_1(\cZ_y, \bbC)$.
Around the large radius point $y=0$, 
there are unique homology classes $\alpha_{\rm LR}$, $\beta_{\rm LR}$ such that \cite{CKYZ99:locMS,HKR:HAE}
\[
\Pi_{\alpha_{\rm LR}} \sim \frac{1}{2\pi\ri} \log y + \cO(y)\,, \quad \Pi_{\beta_{\rm LR}} \sim \frac{1}{(2\pi\ri)^2}\l[\frac{1}{2}\log^2 y - 
 2\pi\ri \Pi_{\alpha_{\rm LR}} \log (-y)\r]+ \cO(y)\,.
\]
Around the conifold point $y=-1/27$, there are unique homology classes $\alpha_{\rm CF}=-3\beta_{\rm LR }$, $\beta_{\rm CF}=\alpha_{\rm LR}$ such that \cite{HKR:HAE} %
\[
\Pi_{\alpha_{\rm CF}} \sim
\frac{\sqrt{3}\left(1+27 y\right)}{2\pi } + \cO(1+27 y)^2
\,, \qquad \Pi_{\beta_{\rm CF}} \sim
\Pi_{\alpha_{\rm CF}} \frac{\log \left(1+27 y \right)}{2\pi \ri}   + \cO(1)
\,.
\]
%
The complement of the conifold and large radius points is the coarse moduli space $Y_1(3)$ of the modular curve $ [\mathbb{H}/\Gamma_1(3)]$, with
$\mathbb{H} = \{\tau \in \bbC\,|\, \mathrm{Im}(\tau) > 0\}$ the upper half-plane, and 
\[
\Gamma_1(3) \coloneqq \left\{ \begin{pmatrix} a & b \\ c & d \end{pmatrix} \in \mathrm{SL}_2(\mathbb{Z}) \;\middle|\; a,b \equiv 1\,,\, c \equiv 0 \pmod{3} \right\}\,.
\]
Explicitly, we identify $Y_1(3)$ with $\bbP(3,1) \setminus \{y=0,-1/27\}$ using the large radius modular coordinate 
\[
\tau_{\rm LR}(y) \coloneqq \frac{ \de_y \Pi_{\beta^{\rm LR}}}{\de_y \Pi_{\alpha^{\rm LR}}}=\frac{1}{2\pi \ri} \l(\log y-15 y+\frac{333 y^2}{2}+\cO\left(y^3\right)\r)\,.
\]
This is related to the modular coordinate near the conifold point by the Fricke involution \cite{HKR:HAE,Alim:2013eja}
\bea
\tau_{\rm CF} &\coloneqq & \frac{ \de_y \Pi_{\beta^{\rm CF}}}{ \de_y \Pi_{\alpha^{\rm CF}}} = -\frac{1}{ 3\tau_{\rm LR}}
\,.
\label{eq:taucf}
\eea
In term of the square of the respective elliptic nomes, $q_\bullet \coloneqq \exp(2 \pi\ri \tau_\bullet) $ ($\bullet \in \{\rm LR, CF\}$), the Fricke involution corresponds to the transformation \eqref{eq:qfricke}.
\begin{figure}[t]
\centering
\begin{tikzpicture}[scale=5]

  \draw[->] (-0.6, 0) -- (0.6, 0) node[right] {$\mathrm{Re}\tau_{\rm LR}$};
  \draw[->] (0, 0) -- (0, 1.5) node[above] {$\mathrm{Im}\tau_{\rm LR}$};

  \draw[thick] (-0.5, 0.29) -- (-0.5, 1.5);
  \draw[thick] (0.5, 0.29) -- (0.5, 1.5);

  \draw[thick, domain=0:120] plot ({-1/3+2/3*cos(\x)/2}, {2/3*sin(\x)/2});
  \draw[thick, domain=0:120] plot ({1/3-2/3*cos(\x)/2}, {2/3*sin(\x)/2});

  \filldraw[black] ({-cos(120)}, {sin(120)}) circle (0.01) node[right] {$\varphi$};
  \filldraw[black] (0, 0) circle (0.01); \draw (0, 0) circle (0.01);
  \filldraw[black] (0, 1.45) circle (0.01); 
  \node at (0.07, 1.45) {~~$\ri\infty$};
  \node at (0.03, -0.04) {$0$};

  \node at (-0.5, -0.08) {$-\frac{1}{2}$};
  \node at (0.5, -0.08) {$\frac{1}{2}$};

\end{tikzpicture}
\caption{The fundamental domain for $\Gamma_1(3)$. The cusps at $\tau_{\rm LR}=\ri \infty$ and $\tau_{\rm LR}=0$ ($\tau_{\rm CF}=\ri \infty$) correspond to the large radius $(y=0)$ and conifold point $(y=-1/27)$ respectively. The cubic elliptic point at $\tau_{\rm LR}=\varphi$ corresponds to the orbifold point $y=\infty$.}
\end{figure}
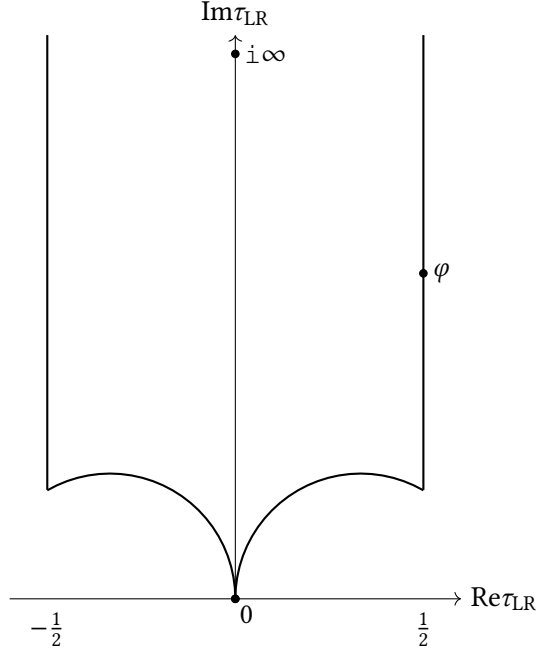
The discriminant $\mathsf{X}(y)$, Jacobian $\mathsf{C}(y)$, and propagator $\mathsf{F}(y)$, defined as
\beq
\mathsf{X}(y) \coloneqq \frac{1}{1+27 y}\,, \qquad \mathsf{C}(y) \coloneqq y \de_y \log \Pi_{\alpha_{\rm LR}}\,, \qquad \mathsf{F}(y) \coloneqq y\de_y \log \mathsf{C}(y)+\frac{1-\mathsf{X}}{3}
\label{eq:XAF}
\eeq
are quasi-modular functions of $\Gamma_1(3)$ of weight and depth equal to  $(0,0)$, $(1,0)$, and $(0,1)$  respectively \cite{Alim:2013eja,BFGW21:HAE}. 
%
In terms of these, the 3-pointed genus zero and 1-pointed genus one GW potential of $K_{\bbP^2}$ read \cite{Aganagic:2006wq,HKR:HAE,Brini:2024gtn}
\[
\l(Q \de_{Q}\r)^3 \mathrm{GW}^{\rm LR}_{0} = \frac{\mathsf{X}}{3 \mathsf{C}^3}\,, \qquad 
Q \de_{Q} \mathrm{GW}^{\rm LR}_{1} = -\frac{\mathsf{F}}{2 \mathsf{C}}-\frac{\mathsf{X}}{12 \mathsf{C}}\,,
\] 
upon substituting $y \longrightarrow y(Q)$, with $y(Q)$ the series inverse of the mirror map $Q(y) \coloneqq \re^{\Pi_{\alpha_{\rm LR}}} \in \bbQ[\![y]\!]$. In the following, we will also need the \emph{conifold mirror map}
\[
t_{\rm CF} \coloneqq \frac{2 \pi}{\sqrt{3}} \Pi_{\alpha_{\rm CF}} = (1+27 y)+\cO(1+27 y)^2\,,
\]
which is a convergent power series for $|1+27 y|<1$. We will denote by $y(t_{{\rm CF}})$ its Lagrange inverse series in the co-domain of its domain of convergence.

\subsubsection{The Eynard--Orantin recursion and higher genus mirror symmetry}


Recall that a complex Torelli marking on a smooth projective curve $C$ is the choice of a Lagrangian subspace of $H_1(C, \bbC)$ with respect to the intersection form. Since $\cZ_y$ is an elliptic curve, a Torelli marking on $\cZ_y$ is tantamount to the choice of a line spanned by a non-trivial class in $H_1(\cZ_y,\bbC)$. For $\bullet \in \{{\rm LR}, {\rm CF}\}$, each boundary point $y \in \{0,-1/27\}$ selects such a Torelli marking through the associated $\alpha_\bullet$-cycle. 

For $(p_1, \dots, p_k) \in \mathrm{Sym}^k \cZ_y$ a point on the $k^{\rm th}$-symmetric product of a regular fibre, we define 
\beq
\omega_{0,1}^{\bullet}(p) \coloneqq \log z_1 \rd \log z_2\,, \qquad
\omega_{0,2}^{\bullet}(p_1, p_2) \coloneqq B^{\bullet}(p_1,p_2)
\label{eq:bergmann}
\eeq
where $B^{\bullet}(p_1,p_2)$ is the  \emph{Bergmann kernel with $\bullet$-Torelli marking} of $\cZ_y$. This is the unique bi-differential on $\cZ_y^{(2)}$, with pole divisor supported on the diagonal, and further satisfying, in an affine chart $z: \cZ_y \to \bbA^1$,
\beq
B^\bullet(p_1,p_2) = \frac{\rd z(p_1) \boxtimes \rd z(p_2)}{(z(p_1)-z(p_2))^2} + \mathrm{holomorphic}\,, \qquad 
\int_{\alpha_{\bullet}} B^\bullet(p_1, \cdot) = 0\,.
\eeq
Let $R$ be the ramification locus of $z_2:\cZ_y \to \bbP^1$. Since the $z_2$-projection is Morse for $y \notin \{0,-1/27\}$, these are all simple ramification points. For $p$ in a small neighbourhood of $p_{\rm cr}$, we write $\overline p$ for the image of $p$ under the corresponding deck involution. As in \cref{sec:EO}, we then proceed to define recursively, for $g \geq 0$ and $k>0$ with $2g-2+k>0$, symmetric multi-differentials $\omega_{g,k}^\bullet$ through the Eynard--Orantin topological recursion:

\begin{align*}
\omega^\bullet_{g,k+1}(p_0, p_1, \dots, p_n)= &
\sum_{p'\in R} \Res_{p=p'} {K}^\bullet(p_0,p)\Bigg[ \omega^\bullet_{g-1,k+2}(p, \overline{p}, p_1, \dots, p_k) \\ 
& +\sum_{\substack{J \subseteq I, 0 \leq h \leq g\,,\\ (J,h) \neq (\emptyset,0),(I,g)}} \omega^\bullet_{h,|J|+1}(p, p_J) \omega^\bullet_{g-h,k-|J|+1}(\overline{p}, p_{I \setminus J})\Bigg]\,, \\
\end{align*} 

with

\beq
K^\bullet(p_0,p) \coloneqq \frac{1}{2}\l(\frac{\int_{p}^{\overline{p}} \omega^\bullet_{0,2}(p_0,\cdot )}{\omega^\bullet_{0,1}(p)-\omega^\bullet_{0,1}(\overline{p})}\r)\,. 
\label{eq:intker}
\eeq

The ``remodelling'' approach to higher genus mirror symmetry \cite{Bouchard:2007ys,Eynard:2012nj,Fang:2016svw} then implies that the higher genus large-volume GW generating series of $X$ coincide, up to normalisation, with the free energies of the topological recursion.

\begin{thm}[\cite{Fang:2016svw}]
For $2g-2>0$, we have
    \[
\mathrm{GW}_g^{\rm LR}\bigg|_{Q=\re^{\Pi_{\alpha_{\rm LR}}}} =  \frac{(-1)^{g-1}}{2-2g} \sum_{p'\in R} \Res_{p=p'}\l[  \l(\int^p \omega^{\rm LR}_{0,1}\r){\omega}^{\rm LR}_{g,1}(p)\r]\,,
\]
\label{thm:bkmp}
\end{thm}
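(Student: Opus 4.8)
The identity is the closed-string part of the Bouchard--Klemm--Mari\~no--Pasquetti ``remodelling'' correspondence for the toric Calabi--Yau threefold $K_{\bbP^2}$, proved in \cite{Bouchard:2007ys,Eynard:2012nj,Fang:2016svw}; I indicate the shape of the argument, which matches two graph-sum presentations of the two sides term by term.

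\textbf{A-model side.} Virtual localisation on $\overline{M}_g(\bbP^2, dH)$ for the standard torus action, combined with the definition of the twisted class through $c_{\rm top}(-R^\bullet\pi_*\mathrm{ev}^*\cO_{\bbP^2}(-3))$, writes $\mathrm{GW}_g^{\rm LR}(Q)$ as a sum over decorated trivalent graphs whose vertex weights are Hodge integrals over the $\overline{M}_{g',n}$. Resumming the vertex weights by means of the Mari\~no--Vafa type Hodge integral identities yields the Aganagic--Klemm--Mari\~no--Vafa topological-vertex expression; after the substitution $Q=\re^{\Pi_{\alpha_{\rm LR}}}$ this is a generating function built from the framed one-leg vertex associated with a single outer zero-framing brane on $K_{\bbP^2}$.

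\textbf{B-model side.} The Hori--Iqbal--Vafa curve $\{1+z_1+z_2+y/(z_1z_2)=0\}$ is precisely the spectral curve attached to that brane, and the disk and annulus data \eqref{eq:bergmann} are fixed by the framing and by the large-radius Torelli marking. One shows that the Eynard--Orantin recursion on this curve, after the appropriate Laplace transform, reproduces the same decorated graph sum: each ramification point of $z_2$ plays the role of a trivalent vertex, the kernel $K^{\rm LR}(p_0,p)$ of \eqref{eq:intker} of the edge propagator, and the local expansions of $\omega_{0,1}^{\rm LR}$ and $\omega_{0,2}^{\rm LR}$ near the ramification locus of the vertex building blocks; this matching of the $\omega_{g,n}^{\rm LR}$ with the open Gromov--Witten potentials of $K_{\bbP^2}$ is the core of \cite{Fang:2016svw}. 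Finally, the dilaton-type relation for topological-recursion free energies, $F_g=\tfrac{1}{2-2g}\sum_{p'\in R}\Res_{p=p'}\big(\int^p\omega_{0,1}^{\rm LR}\big)\omega_{g,1}^{\rm LR}$ \cite{Eynard:2007kz}, is matched to the dilaton/string equation relating $\mathrm{GW}_g^{\rm LR}$ to the one-point open amplitude, which produces the displayed formula; the sign $(-1)^{g-1}$ and the absence of an additive constant for $g\geq 2$ are bookkeeping from the vertex normalisation and from the exclusion of constant maps in \eqref{eq:GWP2intro}.

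The main obstacle is the B-model graph matching: proving that the recursion output, organised by residues at the ramification points, reproduces the localisation graph sum requires a careful analysis of the local behaviour of $K^{\rm LR}$ against the Laplace-transformed vertex, together with control of the open and closed mirror maps; this is exactly what \cite{Fang:2016svw} establishes (building on \cite{Bouchard:2007ys,Eynard:2012nj}). A route specific to $K_{\bbP^2}$ that avoids it is to note that both sides obey the BCOV holomorphic anomaly equations within the ring $\RR_{K_{\bbP^2}}$ with identical genus $\leq 1$ initial data --- the left-hand side by \cite{LP18:HAE,Coates:2018hms}, the right-hand side by the modularity of Eynard--Orantin invariants of elliptic spectral curves --- so that equality is forced by the finite-generation and orbifold-regularity constraints recalled in \cref{sec:localp2intro}.
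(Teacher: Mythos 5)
Your main route coincides with the paper's: Theorem \ref{thm:bkmp} is not proved in the paper at all, it is imported from \cite{Fang:2016svw}, and your sketch is a reasonable outline of the proof of the remodelling conjecture given there (localisation/topological-vertex graph sum on the A-side, Laplace-transformed Eynard--Orantin graph sum on the B-side matched vertex by vertex, plus the Eynard--Orantin formula expressing $F_g$ through residues of $\big(\int^p\omega_{0,1}\big)\omega_{g,1}$). To that extent there is nothing to object to.

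The alternative shortcut you propose in the final paragraph, however, does not work. The holomorphic anomaly equations together with finite generation and orbifold regularity determine $\mathrm{GW}^{\rm LR}_g$ only up to its constant-in-$\mathsf{F}$ term, which by \eqref{eq:modamb} ranges over the $(2g-1)$-dimensional space $\mathrm{span}_\bbQ\{\mathsf{X}^n\}_{n=0}^{2g-2}$; two solutions of the HAE with identical genus $\leq 1$ initial data can therefore still differ at every genus $g \geq 2$, so equality of the two sides is \emph{not} forced by those constraints. Eliminating this ambiguity requires an extra boundary condition --- precisely the conifold gap, which is what the paper sets out to prove using Theorem \ref{thm:bkmp} as input --- so invoking it here would be circular, and without it the argument is incomplete. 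The theorem genuinely rests on the full open/closed matching of \cite{Fang:2016svw} (building on \cite{Bouchard:2007ys,Eynard:2012nj}), as in your first route.
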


We now {\it define} the genus-$g$ conifold GW potential as
\[
\mathrm{GW}^{\rm CF}_g \coloneqq  \frac{(-1)^{g-1}}{2-2g} \sum_{p'\in R} \Res_{p=p'}\l[  \l(\int^p \omega^{\rm CF}_{0,1}\r){\omega}^{\rm CF}_{g,1}(p)\r]\Bigg|_{y=y(t_{{\rm CF}})}\,.
\]
The change of Torelli marking from the large radius to the conifold polarisation corresponds \cite{Coates:2018hms,Alim:2013eja} to the Fricke involution \eqref{eq:taucf}. The Eynard--Orantin recursion then implies \cite{Eynard:2007hf,Eynard:2007kz} that the large radius and conifold GW potentials are related as
\beq
\mathrm{GW}^{\rm CF}_g =\mathrm{GW}^{\rm LR}_g\Big|_{\tau_{\rm LR} \to -\frac{1}{3\tau_{\rm LR}}=\tau_{\rm CF}}\,. 
\label{eq:GWCFLR}
\eeq

\subsection{Proof of the Conifold Gap Conjecture}

\subsubsection{Comparison of disk potentials}

Upon identifying $\tau=\tau_{\rm CF}$, the logarithm of the canonical coordinate on the $x$-plane $\log x(u)$ and the logarithmic derivative of the $\mu_3$-equivariant planar resolvent $\RR_{1,0}(x(u))$ lift to multi-valued functions on the universal family $\cZ$, with logarithmic monodromies around the three sections $\mathsf{p}_{- 1/6}$, $\mathsf{p}_{- 1/6}$ and $\mathsf{p}_{0}$. The next Proposition shows how they are related to the logarithms of the two Cartesian projections on the fibres of the Hori--Iqbal--Vafa family, $z_1$ and $z_2$.
\begin{prop}
We have
\begin{align}
\log z_1  = -\ri \sqrt{3}  
\RR_{0,1} +2 \log x -\log(1+x)
-\frac{1}{3}\log(y)
\,, \nn \\
\log z_2 = 
-\ri \sqrt{3}  
\RR_{0,1} - \log x-\log(1+x)
-\frac{1}{3}
\log(y)
\,.
\label{eq:z1z2rx}
\end{align}
In particular,
\[
\rd \RR_{0,1} \wedge \rd\log x = \frac{\ri}{3 \sqrt{3}}\, \rd \log z_1 \wedge \rd \log z_2\,.
\]
\end{prop}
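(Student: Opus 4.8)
The plan is to write down, in the elliptic uniformisation of \cref{sec:RHR01} with $\tau=\tau_{\rm CF}$ (so that $\mathsf q=q_{\rm CF}$ and $y=y(\mathsf q)$ is the Hauptmodul of $\Gamma_1(3)$ identifying $X_1(3)$ with $\bbP(3,1)\setminus\{0,-1/27\}$), an explicit $\vartheta_1$-formula for the Cartesian coordinates $z_1,z_2$ on the Hori--Iqbal--Vafa fibre $\cZ_y$ in the conifold polarisation, and then to read off \eqref{eq:z1z2rx} by comparison with the closed form \eqref{eq:r01} of $\RR_{0,1}$. Substituting \eqref{eq:r01} into the right-hand side of \eqref{eq:z1z2rx} and eliminating $x(u)$ via \eqref{eq:xu}, one obtains the two candidate functions
\[
\hat z_1(u)=\frac{\vartheta_1(u+\tfrac{1}{6})^2}{y^{1/3}\,\vartheta_1(u-\tfrac{1}{6})\,\vartheta_1(u-\tfrac{1}{2})}\,,\qquad
\hat z_2(u)=-\frac{\vartheta_1(u-\tfrac{1}{6})^2}{y^{1/3}\,\vartheta_1(u+\tfrac{1}{6})\,\vartheta_1(u-\tfrac{1}{2})}\,,
\]
and it suffices to show that $u\mapsto(\hat z_1(u),\hat z_2(u))$ is the normalisation morphism $T\xrightarrow{\ \sim\ }\cZ_y$ carrying the marking $\alpha=[\cA]$ of \cref{sec:RHR01} to the conifold marking $\alpha_{\rm CF}$ of \eqref{eq:taucf}; granting this, the right-hand side of \eqref{eq:z1z2rx} is, by the very definition of the Cartesian coordinates on \eqref{eq:HV}, precisely $(\log z_1,\log z_2)$.

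First I would check, using the quasi-periodicities \eqref{eq:thetaper}, that the automorphy factors under $u\mapsto u+1$ and $u\mapsto u+\tau$ cancel identically in both quotients, so that $\hat z_1,\hat z_2\in\bbC(T)$, with $\operatorname{div}(\hat z_1)=2\mathsf p_{-1/6}-\mathsf p_{1/6}-\mathsf p_{1/2}$ and $\operatorname{div}(\hat z_2)=2\mathsf p_{1/6}-\mathsf p_{-1/6}-\mathsf p_{1/2}$; in particular each $\hat z_i$ has degree $2$, $\hat z_1/\hat z_2=x(u)^3$, and $\hat z_1\hat z_2=-\vartheta_1(u+\tfrac{1}{6})\vartheta_1(u-\tfrac{1}{6})\big/\big(y^{2/3}\vartheta_1(u-\tfrac{1}{2})^2\big)$. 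The heart of the argument is the plane-curve identity $\hat z_1+\hat z_2+y/(\hat z_1\hat z_2)+1=0$, i.e. that $(\hat z_1,\hat z_2)$ lands on \eqref{eq:HV}: the left-hand side is an elliptic function with possible poles only at $\{\mathsf p_{\pm1/6},\mathsf p_{1/2}\}$; the polar parts at $\mathsf p_{1/2}$ cancel by the special values $\vartheta_1(\tfrac{2}{3})=\vartheta_1(\tfrac{1}{3})$ and $\vartheta_1(-a)=-\vartheta_1(a)$, while the vanishing of the residues at $\mathsf p_{\pm1/6}$ and of the leftover constant is equivalent to the $\vartheta$-constant expression of $y(\mathsf q)$ at $\tau=\tau_{\rm CF}$. \emph{This theta-function identity is the main obstacle}: it is exactly where the dictionary $t=t(\mathsf q)$, $\mathsf q=q_{\rm CF}$, must be invoked, which rests on the period normalisation \eqref{eq:AMMper} of \textbf{BVP(vii)} (equivalently, on \cref{prop:CtoT,rmk:tq}); one may either verify the required $q$-expansion identity order by order, or argue more cleanly from the uniqueness of the solution of the conifold boundary value problem for the planar resolvent.

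Granting the plane-curve equation, the rest is short. Since \eqref{eq:HV} is irreducible of geometric genus one, the morphism $T\to\cZ_y^{\mathrm{pl}}$, $u\mapsto(\hat z_1(u),\hat z_2(u))$, has degree one (because $\hat z_1$ has degree $2$ on $T$ and the first projection on $\cZ_y^{\mathrm{pl}}$ has degree $2$), hence it is the normalisation $T\xrightarrow{\sim}\cZ_y$ and identifies $(\hat z_1,\hat z_2)$ with $(z_1,z_2)$. For the Torelli marking, both $\alpha=[\cA]$ and $\alpha_{\rm CF}$ are characterised by their degeneration as the nome tends to $0$: by \cref{prop:CtoT,rmk:tq} the cycle $\cA$ collapses the cut $\mathfrak{I}_-$ as $t=t(\mathsf q)\to0$, which is precisely the vanishing cycle of $\cZ_y$ at the conifold $y=-\tfrac{1}{27}$ defining $\alpha_{\rm CF}$ through \eqref{eq:taucf}; since a Torelli marking on an elliptic curve amounts to a single homology class, this forces $[\cA]\mapsto\alpha_{\rm CF}$ and establishes \eqref{eq:z1z2rx}.

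The wedge identity is then purely formal: from \eqref{eq:z1z2rx} one has $\log z_1-\log z_2=3\log x$ and $-\ri\sqrt3\,\RR_{0,1}=\tfrac{1}{3}\log z_1+\tfrac{2}{3}\log z_2+\log(1+x)+\tfrac{1}{3}\log y$, and differentiating and wedging with $\rd\log x=\tfrac{1}{3}(\rd\log z_1-\rd\log z_2)$ — using that $\rd\log(1+x)\wedge\rd\log x=0$ since both factors are multiples of $\rd x$ — yields, after a short bookkeeping of the remaining $2$-forms, $\rd\RR_{0,1}\wedge\rd\log x=\tfrac{\ri}{3\sqrt3}\,\rd\log z_1\wedge\rd\log z_2$; here the rational factor $\tfrac{1}{3}=\bigl|\det\bigl(\begin{smallmatrix}1&2\\1&-1\end{smallmatrix}\bigr)\bigr|^{-1}$ is the reciprocal determinant of the $\mathrm{GL}_2(\bbQ)$-block of the logarithmic change of variables in \eqref{eq:z1z2rx}, in accordance with the $\mathrm{GL}_2(\bbQ)\ltimes\mathrm{Symp}$ automorphism of spectral data anticipated in the strategy.
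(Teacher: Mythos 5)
Your reduction is set up correctly: substituting \eqref{eq:r01} and \eqref{eq:xu} into \eqref{eq:z1z2rx} does produce exactly the theta quotients $\hat z_1,\hat z_2$ you write, and, \emph{granting} that $(\hat z_1,\hat z_2)$ parametrises the fibre \eqref{eq:HV}, the remaining steps (degree count, identification with the normalisation, matching of markings, and the wedge identity -- which is indeed just linear algebra on the log-affine change of variables at fixed $y$, using $\rd\log(1+x)\propto\rd\log x$; the relevant determinant is $-3$, so do track the overall sign against the stated constant) are routine. The genuine gap is the step you yourself flag as ``the main obstacle'': you must prove that the relation $\hat z_1+\hat z_2+y/(\hat z_1\hat z_2)+1=0$ holds with $y$ the \emph{pre-assigned} modular value at nome $\mathsf{q}=\mathsf{q}_{\rm CF}$, i.e.\ a theta-constant identity for the Hauptmodul in the Fricke frame. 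Neither of your suggested routes establishes it: an order-by-order $\mathsf{q}$-expansion check is not a proof, and uniqueness of the solution of the boundary value problem of \cref{sec:RHR01} only pins down $\RR_{0,1}$ on the matrix-model side -- it carries no information about the Hori--Iqbal--Vafa coordinates $z_1,z_2$, nor about which value of $y$ the torus $T$ corresponds to, so it cannot by itself deliver the required identity.

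The paper's proof circumvents precisely this point rather than confronting it. It introduces the three elliptic functions $H_{(a,b,c)}$ attached to the cyclic assignments of $\{-1/6,1/6,1/2\}$, each with at most simple poles at those three points, checks that the residues in the sum $\mathfrak{P}(u)=\sum H_{(a,b,c)}(u)$ cancel pairwise so that $\mathfrak{P}\equiv\eta$ is constant, and then observes that, because $\prod H_{(a,b,c)}=-1$, the constancy statement rewritten in terms of $z_1\propto H_{(-1/6,1/6,1/2)}$ and $z_2\propto H_{(1/6,-1/6,1/2)}$ \emph{is} the cubic relation \eqref{eq:HV}, with the modulus constructed as the theta constant $y=\eta^{-3}$; the logarithmic formulas then follow from $H_{(-1/6,1/6,1/2)}=\re^{-\ri\sqrt{3}\RR_{0,1}}x^2/(1+x)$ and its companion, and the identification of the frame ($\mathsf{q}=\mathsf{q}_{\rm CF}$, $\alpha\mapsto\alpha_{\rm CF}$) is dealt with afterwards in the comparison of Torelli markings. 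In other words, the modulus is built into the construction instead of being matched against an independently known theta identity, which is exactly the lemma your proposal is missing. To complete your argument you would either import this residue-cancellation trick (which shows your candidates satisfy \eqref{eq:HV} for $y=\eta^{-3}(\mathsf{q})$, reducing your obstacle to identifying $\eta^{-3}$ with the modular parameter of the family), or give an honest proof that $\eta^{-3}$ is the $\Gamma_1(3)$ Hauptmodul in the conifold frame; as written, the proposal leaves the central identity unproven.
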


\begin{proof}
We start by defining
\[
H_{(a,b,c)}(u) \coloneqq 
\frac{\ri \re^{3\pi\ri a} \vartheta_1(u-a)^2}{\vartheta_1(u-b)\vartheta_1(u-c)}\,.
\]
Specialising to $\{a,b,c\}=\{-\frac{1}{6},\frac{1}{6},\frac{1}{2}\}$ it is straightforward to check, using \eqref{eq:thetaper}, that $H_{(a,b,c)}$ is meromorphic and doubly-periodic on $\Delta$, with at most simple poles at $u \in \{-1/6, 1/6, 1/2\}$,
\[
H_{(a,b,c)} \in \Gamma(T, \cO_T(\mathsf{p}_{-1/6}+\mathsf{p}_{1/6}+\mathsf{p}_{1/2}))
\,,
\]
where furthermore the product over cyclic permutations of the argument satisfies
\beq
\prod_{a \neq b, a \neq c, b > c} H_{(a,b,c)}(u) = -1
\,.
\label{eq:prodH}
\eeq
Consider now
\[
\mathfrak{P}(u) \coloneqq
\sum_{\{a,b,c\} = \{-\frac{1}{6},\frac{1}{2},\frac{1}{6}\}} H_{(a,b,c)}(u)
\,.
\]
%
It is elementary to verify that the residues of the summands at the poles cancel pairwise:
\beq
\Res_{u \in \{-\frac{1}{6},\frac{1}{6},\frac{1}{2}\}} \mathfrak{P}(u)\rd u =0\,, 
\label{eq:ResP}
\eeq
implying that $\mathfrak{P}(u)$ is constant on $\Delta$, $$\mathfrak{P}(u)=\eta \in \bbC\,.$$
From \eqref{eq:prodH}, in terms of
\[
z_1 \coloneqq -\eta H_{(-\frac{1}{6},\frac{1}{6},\frac{1}{2})}\,, \quad z_2 \coloneqq -\eta H_{(\frac{1}{6},-\frac{1}{6},\frac{1}{2})}\,, \quad z_1 z_2 =  -\frac{\eta^2}{ H_{(\frac{1}{2},-\frac{1}{6},\frac{1}{6})}}\,,
\]
the regularity condition \eqref{eq:ResP} reads 
\[
1+z_1 + z_2 + \eta^{-3} \frac{1}{z_1 z_2} =0 \,.
\]
Setting $y \coloneqq \eta^{-3}$, this coincides with the 
universal curve for the modular group $\Gamma_1(3)$ in \eqref{eq:HV}. Since, by \eqref{eq:xu} and \eqref{eq:r01sol}, we have
\[
H_{(-\frac{1}{6},\frac{1}{6},\frac{1}{2})} = \exp(-\ri \sqrt{3}  
\RR_{0,1}(u))\frac{ x(u)^2}{1+x(u)}\,, \qquad H_{(\frac{1}{6},-\frac{1}{6},\frac{1}{2})} = \frac{\exp(-\ri \sqrt{3}  
\RR_{0,1}(u))}{x(u)(1+x(u))}\,,
\]
we find that
\begin{align*}
\log z_1 = & \log\l(-\eta H_{(-\frac{1}{6},\frac{1}{6},\frac{1}{2})}\r) = -\ri \sqrt{3}  
\RR_{0,1} +2 \log x -\log(1+x)
-\frac{1}{3}\log(y)\,, \\
\log z_2 = & \log\l(-\eta H_{(\frac{1}{6},-\frac{1}{6},\frac{1}{2})}\r) = 
-\ri \sqrt{3}  
\RR_{0,1} - \log x-\log(1+x)
-\frac{1}{3}
\log(y)\,,
\end{align*}
from which the claim follows.
\end{proof}

From the Proposition it follows in particular that the periods of $\cS \omega^{(\Phi, \Lambda)}_{0,1} = \RR_{0,1} \rd \log x$ and $\log z_1 \rd \log z_2$ on the fibres of $\cZ$ coincide up to a constant normalisation:
\[
\int_\gamma \cS \omega^{(\Phi, \Lambda)}_{0,1} =
\int_\gamma \RR_{0,1} \frac{\rd x}{x} = \frac{\ri}{3 \sqrt{3}}\, \int_\gamma \log z_1 \rd \log z_2
\]
for all $\gamma \in H_1(\cZ_y,\bbC)$.

\subsubsection{Comparison of Torelli markings}

The Eynard--Orantin topological recursion for the higher genus free energies in the conifold model assumes the canonical Torelli marking associated to the line spanned by $\alpha=[x(\cA)]\in H_1(T,\bbC)$. The analogous recursion for the higher genus GW potentials in the conifold polarisation, on the other hand, selects the Torelli marking associated to the line spanned by $\alpha_{\rm CF}\in H_1(\cZ_y,\bbC)$. To show that the two Lagrangians agree, it suffices to show, by the discussion in \cref{sec:HV}, that
\[
\int_\alpha \cS \omega^{(\Phi, \Lambda)}_{0,1}
= \int_{\alpha} \RR_{0,1} \frac{\rd x}{x} = \cO(1+27 y)
\] 
is holomorphic and linearly vanishing at the conifold point, $y=-1/27$. To see this, note that, from \eqref{eq:taucf}, we have
\[
\mathsf{q} = \mathsf{q}_{\rm CF} \coloneqq \re^{2\pi\ri \tau_{\rm CF}} =\left(y+\frac{1}{27}\right)+\cO\left(1+27 y\right)^2
\]
and using \eqref{eq:thetadef} we find
\[
t = \frac{\ri}{2\pi} \int_{-1/2}^{1/2} \RR_{0,1}(x(u)) \frac{x'(u)}{x(u)} \rd u = \frac{1}{9} \left(1+27 y\right)+\cO\left(1+27 y\right)^2
\,,
\]
from which we in particular deduce that
\[
t = \frac{1}{9}\, t_{\rm CF}\,, \qquad \bbC\bra \alpha \ket = \bbC\bra \alpha_{\rm CF} \ket\,. 
\]

\subsubsection{Comparison of annulus potentials}
Recall, from  \eqref{eq:bergmann}, that $\omega^{\rm CF}_{0,2}=B^{\rm CF}$ is the unique meromorphic bi-differential on $T^{(2)}$ with a double-pole on the diagonal and vanishing period along the Lagrangian $\mathrm{span}_\bbC \{\alpha_{\rm CF}\} = 
\mathrm{span}_\bbC \{\alpha\}
 \subset H_1(T, \bbC)$. On the other hand, from \eqref{eq:2pwp}--\eqref{eq:2ptorelli} we have that
\[
\cS \omega_{0,2}^{(\Phi, \Lambda)} = \l[ \wp(u_1-u_2) + \frac{\pi^2}{3} E_2(\mathsf{q})\r]\rd u_1 \rd u_2
\]
has the same pole structure on the diagonal, normalisation, and vanishing period along the ${\rm CF}$-Torelli marking:
\[
\int_\alpha \cS \omega_{0,2}^{(\Phi, \Lambda)}(v, u) = \rd v \int_\alpha \cR_{0,2}(v, u) \rd u = 0
\]
Therefore,
\[
\cS \omega_{0,2}^{(\Phi, \Lambda)} = \omega^{\rm CF}_{0,2}\,.
\]

\subsubsection{Symplectic invariance and the Conifold Gap Conjecture}

Consider the following four basic automorphisms of $(\bbC^\times)^2$:
\begin{align*}
    \mathsf{T}_{f}\,:\, & (z_1, z_2) \to  (z_1 f(z_2), z_2)\,, \\
    \mathsf{S}\,:\, & (z_1, z_2) \to 
     (-1/z_2, z_1)\,, \\
    \mathsf{E}_{(c_1,c_2)} \,:\, & (z_1, z_2) \to(c_1 z_1 , c_2 z_2)\,, \\
    \mathsf{D}_{(w_1,w_2)} \,:\, & (z_1, z_2) \to (z_1^{w_1}, z_2^{w_2})\,,
\end{align*}
where $f(z)$ is a meromorphic function of its argument, and $w_i, c_i \in \bbC$, $i=1,2$. The first three are symplectic automorphisms of the 2-torus w.r.t. the canonical symplectic form $\rd \log z_1 \wedge \rd \log z_2$, while $\mathsf{D}_{w_1,w_2}$ is a global conformal transformation with constant scale factor $w_1 w_2$.  

Let $P\in \bbC[z_1^{\pm 1},z_2^{\pm 1}]$, and $C_P$ be the smooth compactification of its vanishing locus in the 2-torus,
\[
C_P = \overline{\l\{(z_1,z_2)\in (\bbC^\times)^2 \Big|\, P(z_1,z_2)=0\r\}}\,.
\]
We say that the triple $(C_P, z_1, z_2)$ given by the curve $C_P$ with the two marked meromorphic Cartesian projections $(z_1, z_2)$ is \emph{regularly framed} if, for $p\in C_P$,
\[
z_1(p) \in \{0,\infty\} \Longleftrightarrow z_2(p) \in \{0,\infty\}\,.
\]
In other words, $(C_P, z_1, z_2)$ is regularly framed if the divisor of (simple) poles of $\rd \log z_1$ has the same support on $C_P$ as that of $\rd \log z_2$. As such, this is a feature of the specific embedding into the torus (and therefore of the polynomial $P$) rather than an intrinsic property of the curve $C_P$: it can be lost, or acquired, by replacing $P \rightarrow g^* P$ for an automorphism $g \in \mathrm{Aut}((\bbC^\times)^2) \simeq \mathrm{GL}(2,\bbZ)$.
\\

Fix now a Torelli marking on $C_P$, and let $F_g$ be the genus-$g$ topological recursion free energies on $C_P$ associated to the initial data 
\[
\omega_{0,1} \coloneqq \log  z_1 \rd \log z_2\,, \qquad \omega_{0,2} \coloneqq B(z_1,z_2)\,,
\]
with $B(z_1, z_2)$ the Bergmann kernel associated to the marked Torelli Lagrangian in $H_1(C_P,\bbC)$. For $\theta \in \{\mathsf{T}_f, \mathsf{S}, \mathsf{E}_{(c_1,c_2)},\mathsf{D}_{(w_1,w_2)}\} \subset \mathrm{GL}_2(\bbC) \ltimes \mathrm{Symp}((\bbC^\times)^2, \rd\log z_1 \wedge \rd\log z_2)$, we denote by
\[
\theta[\omega_{g,n}]\,, \qquad \theta[F_g]
\]
the Eynard--Orantin correlators and free energies obtained from the topological recursion with initial data
\[
\theta[\omega_{0,1}] \coloneqq \theta^* \omega_{0,1}
\,, \qquad \theta[\omega_{0,2}] \coloneqq \omega_{0,2}\,. 
\]
When $\theta=\mathsf{T}_f$ or $\theta =\mathsf{E}_{(c_1,c_2)}$, it is immediate to check \cite{Eynard:2007hf,Eynard:2009phf} that the topological recursion kernel \eqref{eq:intker} is $\theta$-invariant, and as a consequence so are the higher genus free energies:
\[
\mathsf{T}_f[F_g]=
\mathsf{E}_{(c_1,c_2)}[F_g] = 
F_g\,.
\]
Likewise, for the $\mathsf{D}_{(w_1,w_2)}$-transformed initial data
\[
 \mathsf{D}_{(w_1,w_2)}[\omega_{0,1}] \coloneqq w_1 w_2 \log z_1 \rd\log z_2 \,, \qquad  \mathsf{D}_{(w_1,w_2)}[\omega_{0,2}] \coloneqq B(z_1,z_2)\,,
\]
the recursion kernel gets scaled by $(w_1 w_2)^{-1}$, and
the corresponding free energies are related as \cite{Eynard:2009phf}
\[
\mathsf{D}_{(w_1,w_2)}[F_g] = (w_1 w_2)^{2-2g }F_g\,.
\]
Finally, for $\theta = \mathsf{S}$, the free energies $\mathsf{S}[F_g]$ and $F_g$ are generally related in a complicated way \cite[Thm.~3.2]{Hock:2025wlm}, involving corrections by sums of residues at poles of $\rd \log z_1$ which are not simultaneously poles of $\rd \log z_2$ (and viceversa). If however $(C_P, z_1, z_2)$ is regularly framed these corrections vanish, and
\[
\mathsf{S}[F_g] = F_g\,.
\]

\begin{thm}
    For $g\geq 2$, we have 
    \[
    F^{(\Phi, \Lambda)}_g = 27^{g-1}
        \mathrm{GW}^{\rm CF}_g\Big|_{y=y(t_{\rm CF})}\,.
    \]
    up to a constant independent of $t=t_{\rm CF}/9$.
\label{thm:mm=gw}
\end{thm}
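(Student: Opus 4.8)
The plan is to identify the Eynard--Orantin spectral data behind $F_g^{(\Phi,\Lambda)}$ with that behind $\mathrm{GW}_g^{\rm CF}$ up to an explicit homothety, and to read off the statement from the homothety covariance of the closed sector of the recursion. Throughout, for a spectral curve $(C,\omega_{0,1},\omega_{0,2})$ write $F^{\rm EO}_g[\omega_{0,1};\omega_{0,2}]$ for the genus-$g$ Eynard--Orantin free energy, i.e.\ the quantity $\tfrac{1}{2-2g}\sum_{z'}\Res_{z=z'}\big(\int^z\omega_{0,1}\big)\omega_{g,1}(z)$ with $\omega_{g,1}$ the correlators generated by $(\omega_{0,1},\omega_{0,2})$. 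Recall that $F^{\rm EO}_g$ is unchanged by adding to $\omega_{0,1}$ the exact differential of a function regular at the ramification points, by $\mathsf{T}_f$ and $\mathsf{E}_{(c_1,c_2)}$, and by $\mathsf{S}$ for regularly framed triples, while $\mathsf{D}_{(w_1,w_2)}[F_g]=(w_1w_2)^{2-2g}F_g$, and that a $\mathrm{GL}_2(\bbQ)$-monomial change of coordinates of determinant $D$ rescales $\omega_{0,1}=\log z_1\,\rd\log z_2$ by $D$ modulo exact forms.

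\textbf{Step 1: both sides as Eynard--Orantin free energies.} By \cref{prop:symmFg} applied to $(V,L)=(\Phi,\Lambda)$ with $\sigma(x)=x^3$ and $\chi=\chi_1-\chi_2$, one has $F_g^{(\Phi,\Lambda)}=F^{\rm EO}_g\big[\cS\omega^{(\Phi,\Lambda)}_{0,1};\cS\omega^{(\Phi,\Lambda)}_{0,2}\big]+c_g^{\sigma,\chi}$, the recursion being carried by the elliptic curve $T\simeq\bbC/M$ with $\tau=\tau_{\rm CF}$ and the Torelli marking $\bbC\langle\alpha\rangle$, and $c_g^{\sigma,\chi}\in\bbC$ independent of $(V,L)$, hence of $t$. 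On the mirror side, combining \cref{thm:bkmp} with the Fricke change of polarisation \eqref{eq:GWCFLR} exhibits $(-1)^{g-1}\mathrm{GW}_g^{\rm CF}\big|_{y=y(t_{\rm CF})}=F^{\rm EO}_g\big[\omega^{\rm CF}_{0,1};\omega^{\rm CF}_{0,2}\big]$ with $\omega^{\rm CF}_{0,1}=\log z_1\,\rd\log z_2$, $\omega^{\rm CF}_{0,2}=B^{\rm CF}$, on the fibre $\cZ_y$ with its $\alpha_{\rm CF}$-Torelli marking.

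\textbf{Step 2: matching the spectral data.} The three preceding sub-subsections supply exactly what is needed to compare these two spectral curves on the common elliptic curve $\cZ_y\simeq T$: the Torelli markings agree, $\bbC\langle\alpha\rangle=\bbC\langle\alpha_{\rm CF}\rangle$; the planar two-point functions agree, $\cS\omega^{(\Phi,\Lambda)}_{0,2}=\omega^{\rm CF}_{0,2}$; and the disk data are tied by \eqref{eq:z1z2rx}. Writing $\RR_{0,1}=\tfrac{\ri}{\sqrt3}\log W$ with $W=\re^{-\ri\sqrt3\RR_{0,1}}$ via \eqref{eq:r01}, the relations \eqref{eq:z1z2rx} read $z_1=W x^2(1+x)^{-1}y^{-1/3}$, $z_2=W x^{-1}(1+x)^{-1}y^{-1/3}$, so that $(z_1z_2^2)^{1/3}=W(1+x)^{-1}y^{-1/3}=:\widetilde W$ and $z_1/z_2=x^3$. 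Thus the passage from $(\omega^{\rm CF}_{0,1},\omega^{\rm CF}_{0,2})$ to $(\cS\omega^{(\Phi,\Lambda)}_{0,1},\cS\omega^{(\Phi,\Lambda)}_{0,2})$ is realised by an element of $\mathrm{GL}_2(\bbQ)\ltimes\mathrm{Symp}\big((\bbC^\times)^2,\rd\log z_1\wedge\rd\log z_2\big)$: the $\mathrm{GL}_2(\bbQ)$-monomial map $(z_1,z_2)\mapsto\big((z_1z_2^2)^{1/3},(z_1/z_2)^{1/3}\big)=(\widetilde W,x)$ of determinant $-\tfrac13$, followed by the framing $\mathsf{T}_{1+x}$ and the rescaling $\mathsf{E}_{(y^{1/3},1)}$ (which carry $(\widetilde W,x)$ to $(W,x)$ and preserve $F^{\rm EO}_g$), followed by the homothety $\mathsf{D}$ of scale $\tfrac{\ri}{\sqrt3}$ turning $\log W\,\rd\log x$ into $\RR_{0,1}\,\rd\log x=\cS\omega^{(\Phi,\Lambda)}_{0,1}$. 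The net scaling content is therefore a single homothety of total scale $\lambda=\tfrac{\ri}{\sqrt3}\cdot(-\tfrac13)=-\tfrac{\ri}{3\sqrt3}$, in agreement with $\rd\RR_{0,1}\wedge\rd\log x=\tfrac{\ri}{3\sqrt3}\,\rd\log z_1\wedge\rd\log z_2$, and the covariance rules quoted above give
\[
F^{\rm EO}_g\big[\cS\omega^{(\Phi,\Lambda)}_{0,1};\cS\omega^{(\Phi,\Lambda)}_{0,2}\big]=\lambda^{2-2g}\,F^{\rm EO}_g\big[\omega^{\rm CF}_{0,1};\omega^{\rm CF}_{0,2}\big]+\mathrm{const},
\]
with the additive constant independent of $t$.

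\textbf{Step 3: collecting signs and scales.} Since $\lambda^2=-\tfrac1{27}$ and $2-2g$ is even, $\lambda^{2-2g}=(-\tfrac1{27})^{1-g}=(-1)^{1-g}\,27^{g-1}$. Combining Steps 1 and 2,
\[
F_g^{(\Phi,\Lambda)}=(-1)^{1-g}27^{g-1}\,F^{\rm EO}_g\big[\omega^{\rm CF}_{0,1};\omega^{\rm CF}_{0,2}\big]+\mathrm{const}=(-1)^{1-g}(-1)^{g-1}\,27^{g-1}\,\mathrm{GW}_g^{\rm CF}\big|_{y=y(t_{\rm CF})}+\mathrm{const},
\]
and $(-1)^{1-g}(-1)^{g-1}=1$, so $F_g^{(\Phi,\Lambda)}=27^{g-1}\,\mathrm{GW}_g^{\rm CF}\big|_{y=y(t_{\rm CF})}$ up to a constant independent of $t=t_{\rm CF}/9$.

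\textbf{Main obstacle.} The delicate point is the symplectic bookkeeping in Step 2. On the mirror side the recursion is anchored at the ramification locus of $z_2$, whereas $\cS\omega^{(\Phi,\Lambda)}_{0,1}=\RR_{0,1}\,\rd\log x$ is anchored at that of $x$, and because $z_1/z_2=x^3$ these loci differ; any word realising the monomial part of the transformation in $\mathsf{S},\mathsf{T}_f,\mathsf{E}$ must therefore apply an $\mathsf{S}$-move at a stage where the triple is \emph{not} regularly framed — indeed, in coordinates with $x$ as base the two projections acquire non-matching zero/pole divisors at $u=\pm\tfrac16$ (where $x\in\{0,\infty\}$ but the other coordinate is finite and nonzero), the situation being reversed at $u=\tfrac12$. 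One must then either reorganise the chain so that every $\mathsf{S}$-move acts on a regularly framed triple — using that the Hori--Iqbal--Vafa triple $(C_P,z_1,z_2)$ \emph{is} regularly framed, its two projections having zeros and poles exactly over $u\in\{-\tfrac16,\tfrac16,\tfrac12\}$ — or invoke the corrected $x$--$y$ transformation formula \cite[Thm.~3.2]{Hock:2025wlm} and show that the residue corrections, being localised at the $t$-independent points $u\in\{-\tfrac16,\tfrac16,\tfrac12\}$ and built from the stable differentials $\omega_{g\ge 1,1}$ (which are holomorphic off the ramification locus), telescope over the recursion to a constant independent of $t$ that may be absorbed into $c_g^{\sigma,\chi}$. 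This is the one step that cannot be reduced to the formal manipulations above, and where the regular framing of the mirror curve — rather than of the conifold model itself — plays the essential role.
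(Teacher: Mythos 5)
Your Steps 1 and 3 coincide with the paper's argument (same use of \cref{prop:symmFg} and \cref{thm:bkmp}, same net homothety factor and sign bookkeeping), but the heart of Step 2 is left open, and it is precisely the step the paper's proof is about. The invariance statements actually available are only for the generators $\mathsf{T}_f$, $\mathsf{E}_{(c_1,c_2)}$, $\mathsf{D}_{(w_1,w_2)}$, and for $\mathsf{S}$ \emph{when the triple is regularly framed}; there is no general statement that a $\mathrm{GL}_2(\bbQ)$-monomial change of coordinates rescales $F_g$ by its determinant to the power $2-2g$, nor that adding an exact form to $\omega_{0,1}$ leaves $F_g$ unchanged (such an addition alters both the kernel denominator and the residue formula for $F_g$ unless the added function is a function of the base coordinate, i.e.\ unless it comes from a $\mathsf{T}_f$- or $\mathsf{E}$-move). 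Your own factorisation through $(\widetilde W,x)$ makes the problem visible: as you note, the pair $(W,x)$ fails regular framing at $u\in\{\pm\tfrac16,\tfrac12\}$, so the implicit $\mathsf{S}$-moves in your monomial map are exactly the ones whose effect on $F_g$ is not controlled by the rules you quote. Your ``Main obstacle'' paragraph correctly identifies the two possible repairs but carries out neither, so the key identity $F^{\rm EO}_g[\cS\omega^{(\Phi,\Lambda)}_{0,1};\cS\omega^{(\Phi,\Lambda)}_{0,2}]=\lambda^{2-2g}F^{\rm EO}_g[\omega^{\rm CF}_{0,1};\omega^{\rm CF}_{0,2}]+\mathrm{const}$ is asserted rather than proved.

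The paper closes this gap by exhibiting the specific word \eqref{eq:word},
\[
\mathsf{D}_{(\ri \sqrt{3},1)} \circ
\mathsf{S} \circ \mathsf{E}_{(y^{1/3},y^{1/3})}\circ \mathsf{T}_{1/z_2} \circ \mathsf{S} \circ \mathsf{D}_{(-1,-3)} \circ \mathsf{T}_{1/z_2}\,,
\]
realising \eqref{eq:z1z2rx}, and checking that the Hori--Iqbal--Vafa triple \eqref{eq:HV} \emph{and every intermediate triple along this word} remain regularly framed, so that both $\mathsf{S}$-moves preserve the free energies and the only non-trivial contribution is the homothety factor $(\ri\sqrt3\cdot 3)^{2-2g}=(-27)^{1-g}$ from the two $\mathsf{D}$-factors, which combines with the $(-1)^{g-1}$ of \cref{thm:bkmp} to give $27^{g-1}$. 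To complete your proof you would either need to produce such a word and verify regular framing at each stage (the order of the moves matters: your chain passes through a non-regularly-framed configuration, the paper's does not), or genuinely carry out the residue-correction analysis of \cite[Thm.~3.2]{Hock:2025wlm} and prove -- not just suggest -- that the corrections are $t$-independent; as written, the claim that they ``telescope to a constant'' is not substantiated, since the correction terms involve the stable correlators $\omega_{g,1}$, whose Laurent data at the points $u\in\{\pm\tfrac16,\tfrac12\}$ do depend on $t$ even though the points themselves do not.
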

\begin{proof}

The change-of-variables \eqref{eq:z1z2rx} can be written as the composition
\beq
\mathsf{D}_{(\ri \sqrt{3},1)} \circ
\mathsf{S} \circ \mathsf{E}_{(y^{1/3},y^{1/3})}\circ \mathsf{T}_{1/z_2} \circ \mathsf{S} \circ \mathsf{D}_{(-1,-3)} \circ \mathsf{T}_{1/z_2}\,.
\label{eq:word}
\eeq
It is immediate to check that the
Hori--Iqbal--Vafa defining polynomial in \eqref{eq:HV} is regularly framed, as are its successive compositions with each factor in the word \eqref{eq:word}. Using 
\cref{prop:symmFg,thm:bkmp} we therefore obtain that
\[
F^{(\Phi,\Lambda)}_g\bigg|_{t=t_{\rm CF}/9} = 27^{g-1}\, \mathrm{GW}_g^{\rm CF}\,,
\]
up to a $t$-independent additive constant, concluding the proof.
\end{proof}
Expanding at $t=t_{\rm CF}=0$ using \cref{prop:HZ1}, we deduce the statement of the Conifold Gap Conjecture for local $\bbP^2$.
\begin{cor}
For all $g \geq 2$, we have
\[
\mathrm{GW}_g^{\rm CF} = \frac{3^{g-1} B_{2g}}{2g(2g-2)} t_{\rm CF}^{2-2g}  +\cO(1)\,. 
\]
\label{cor:cg}
\end{cor}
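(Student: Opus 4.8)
The plan is to read the statement off directly from \cref{thm:mm=gw} combined with the Harer--Zagier-type gap property recorded in \cref{prop:HZ1}; once those two inputs are in place, only the bookkeeping of the normalisation constants remains to be done. Schematically: \cref{thm:mm=gw} identifies $27^{g-1}\,\mathrm{GW}^{\rm CF}_g$ with the genus-$g$ free energy of the conifold mean-field model (up to an irrelevant additive constant), and \cref{prop:HZ1} says that the polar part of that free energy near the critical point is the corresponding piece of the Gaussian Unitary Ensemble asymptotics, i.e.\ the Barnes $G$-function term. Substituting the rescaling $t = t_{\rm CF}/9$ then turns this into the asserted $t_{\rm CF}$-asymptotics.

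Concretely, I would first invoke \cref{thm:mm=gw}. Since the pair $(\Phi,\Lambda)$ is an ASCP by \cref{lem:ASCP}, that theorem applies and gives, under the analytic invertible change of variable $t = t_{\rm CF}/9$ introduced in its proof, for every $g\geq 2$,
\[
27^{g-1}\,\mathrm{GW}^{\rm CF}_g = F^{(\Phi,\Lambda)}_g\big(t_{\rm CF}/9\big) + c_g\,, \qquad c_g \in \bbC \text{ independent of } t_{\rm CF}\,.
\]

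Next I would feed in the asymptotics of the right-hand side. Applying \cref{prop:HZ1} to the ASCP $(\Phi,\Lambda)$ yields, near $t=0$, $F^{(\Phi,\Lambda)}_g(t) = \tfrac{B_{2g}}{2g(2g-2)}\,t^{2-2g} + \cO(t^0)$, the polar part being governed, via the asymptotic expansion of $\log G$, by the $1/N$-expansion of the GUE free energy. The change of variable $t = t_{\rm CF}/9$ is analytic and invertible near the origin (using \cref{rmk:tq} for the analyticity of the relevant mirror maps), so the $\cO(t^0)$ remainder stays $\cO(1)$ in $t_{\rm CF}$ and the polar term acquires a factor $9^{2g-2}$:
\[
F^{(\Phi,\Lambda)}_g\big(t_{\rm CF}/9\big) = \frac{B_{2g}}{2g(2g-2)}\,9^{2g-2}\,t_{\rm CF}^{2-2g} + \cO(1)\,, \qquad t_{\rm CF}\to 0\,.
\]

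Finally I would divide the first display by $27^{g-1}$, substitute the second, and simplify the powers of $3$: since $27^{1-g}\,9^{2g-2} = 3^{3(1-g)+2(2g-2)} = 3^{g-1}$, and since $c_g$ together with the regular remainder is absorbed into the error term, one obtains $\mathrm{GW}^{\rm CF}_g = \tfrac{3^{g-1}B_{2g}}{2g(2g-2)}\,t_{\rm CF}^{2-2g} + \cO(1)$, which is exactly the Conifold Gap statement \eqref{eq:cgintro}. At this level there is no genuine obstacle: all the mathematical content has already been discharged upstream --- in \cref{thm:mm=gw}, which identifies the conifold Gromov--Witten potentials with the free energies of the conifold mean-field model via the remodelling theorem and the symplectic (and homothety) invariance of the topological recursion, and in \cref{prop:HZ1}, which extracts the GUE gap property from the Barnes $G$-function. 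The only points deserving care are the arithmetic of the two normalisations $27^{g-1}$ and $t = t_{\rm CF}/9$, and the observation that the $\cO(1)$ term is precisely where the non-equivariantly ill-defined constant-map contribution --- deliberately dropped in the definition of $\mathrm{GW}^{\rm LR}_g$, cf.\ \cref{sec:localp2intro} --- sits, so that imposing $\mathrm{GW}^{\rm LR}_g(Q)|_{Q=0}=0$ would pin it down if one wished.
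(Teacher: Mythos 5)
Your proposal is correct and follows exactly the paper's own (very short) derivation: the paper deduces \cref{cor:cg} by expanding \cref{thm:mm=gw} at $t=t_{\rm CF}=0$ via \cref{prop:HZ1}, which is precisely your two inputs, and your normalisation check $27^{1-g}\,9^{2g-2}=3^{g-1}$ is the right bookkeeping. Nothing is missing.
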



\bibliography{refs}
	

\end{document}